\newtheorem{thm}{Theorem}[section]
\newtheorem{defi}[thm]{Definition}
\newtheorem{cor}[thm]{Corollary}
\newtheorem{q}[thm]{Question}
\newtheorem{prop}[thm]{Proposition}
\newtheorem{rmk}[thm]{Remark}
\newtheorem{lemma}[thm]{Lemma}
\newcommand{\R}{\mathbb{R}}
\newcommand{\Z}{\mathbb{Z}}
\newcommand{\N}{\mathbb{N}}
\begin{document}

\title[The discriminant and oscillation lengths]{The discriminant and oscillation lengths for contact and Legendrian isotopies}

\author{Vincent Colin}
\address{Universit\'e de Nantes, 44322 Nantes, France}
\email{vincent.colin@univ-nantes.fr}

\author{Sheila Sandon}
\address{Universit\'e de Nantes and CNRS, 44322 Nantes, France}
\email{sheila.sandon@univ-nantes.fr}

\thanks{V. Colin supported by the Institut Universitaire de France, ANR Floer Power and ERC Geodycon. S. Sandon supported by ANR Floer Power, ERC Geodycon and National Science Foundation under agreement No. DMS-0635607.}

\begin{abstract}
\noindent
We define an integer-valued non-degenerate bi-invariant metric (the \emph{discriminant metric}) on the universal cover of the identity component of the contactomorphism group of any contact manifold. This metric has a very simple geometric definition, based on the notion of discriminant points of contactomorphisms. Using generating functions we prove that the discriminant metric is unbounded for the standard contact structures on $\mathbb{R}^{2n}\times S^1$ and $\mathbb{R}P^{2n+1}$. On the other hand we also show by elementary arguments that the discriminant metric is bounded for the standard contact structures on $\mathbb{R}^{2n+1}$ and $S^{2n+1}$. As an application of these results we get that the contact fragmentation norm is unbounded for $\mathbb{R}^{2n}\times S^1$ and $\mathbb{R}P^{2n+1}$. By elaborating on the construction of the discriminant metric we then define an integer-valued bi-invariant pseudo-metric, that we call the \emph{oscillation pseudo-metric}, which is non-degenerate if and only if the contact manifold is orderable in the sense of Eliashberg and Polterovich and, in this case, it is compatible with the partial order. Finally we define the discriminant and oscillation lengths of a Legendrian isotopy, and prove that they are unbounded for $T^{\ast}B\times S^1$ for any closed manifold $B$, for $\mathbb{R}P^{2n+1}$ and for some $3$-dimensional circle bundles.
\end{abstract}

\maketitle

\section{Introduction}\label{section: introduction}

Since its discovery in 1990, the Hofer metric on the Hamiltonian group of a symplectic manifold \cite{H} has been considered one of the most important notions in symplectic topology. It is a manifestation of symplectic rigidity, and is crucially related to other symplectic rigidity phenomena such as the existence of symplectic capacities and Gromov's non-squeezing theorem.

In 2000 Eliashberg and Polterovich \cite{EP} noticed that there can be no analogue of the Hofer metric on the contactomorphism group, and raised the question of whether the contactomorphism group admits any geometric structure at all. Motivated by this question, they introduced in their paper the notion of \textit{orderability} of contact manifolds: a contact manifold is said to be orderable if the natural relation induced by positive contact isotopies (i.e. contact isotopies that move every point in a direction positively transverse to the contact distribution) gives a bi-invariant partial order on the universal cover of the contactomorphism group. The original motivation of Eliashberg and Polterovich for introducing this notion was that if $(M,\xi)$ is an orderable  contact manifold then we can apply a general  procedure, that works for any partially ordered group, to associate to the universal cover of the contactomorphism group of $(M,\xi)$ a partially ordered metric space $\big(Z(M,\xi),\delta\big)$. This is done by first defining, in terms of the relative growth, a pseudo-distance $\delta$ on the group of elements of the universal cover that are generated by a positive contact isotopy, and then by quotienting this group by the equivalence classes of elements which are at zero distance from each other. After the work of Eliashberg and Polterovich, \textit{integer-valued} bi-invariant metrics on the contactomorphism group itself have been discovered in the case of $\mathbb{R}^{2n}\times S^1$ by the second author \cite{mio2}, in the case of $T^{\ast}B\times S^1$ for any closed manifold $B$ by Zapolsky \cite{Z}, and for some other classes of circle bundles by Fraser, Polterovich and Rosen \cite{FPR} and Albers and Merry \cite{AM}.

In this paper we define an integer-valued non-degenerate bi-invariant metric on the universal cover of the identity component of the contactomorphism group of any contact manifold. We call this metric the \textbf{discriminant metric}. It is the metric which is associated to the norm\footnote{Recall that any conjugation-invariant norm $\nu: G \rightarrow [0,+\infty)$ on a group $G$ induces a bi-invariant metric $d_{\nu}$ on $G$ by defining $d_{\nu}(f,g) = \nu(f^{-1}g)$.} described in the following definition.

\begin{defi}\label{definition: definition in the introduction}
Let $\big(M,\xi=\text{ker}(\alpha)\big)$ be a (co-oriented) compact\footnote{If $M$ is not compact, we will consider the discriminant norm on the universal cover of the identity component of the group of compactly supported contactomorphisms.} contact manifold, and consider the universal cover $\widetilde{\text{Cont}_0}(M,\xi)$ of the identity component of the contactomorphism group. The \emph{discriminant norm} of an element of $\widetilde{\text{Cont}_0}(M,\xi)$ is the minimal integer $N$ needed to write a representative $\{\phi_t\}_{t\in[0,1]}$ as the concatenation of a finite number of pieces $\{\phi_t\}_{t\in[t_i,t_{i+1}]}$, $i=0,\cdots,N-1$, such that, for each $i$, the submanifold $\bigcup_{t\in[t_i,t_{i+1}]}\text{gr}(\phi_t)$ of $M\times M\times \mathbb{R}$ is embedded.
\end{defi}

In other words, the discriminant metric is the word metric on $\widetilde{\text{Cont}_0}(M,\xi)$ with respect to the generating set formed by (non-constant) contact isotopies $\{\phi_t\}_{t\in[0,1]}$ such that $\bigcup_{t\in[0,1]}\text{gr}(\phi_t)$ is embedded (see Section \ref{section: discriminant metric}). Recall that the graph of a contactomorphism $\phi$ of $M$ is the Legendrian submanifold of the contact product $M\times M\times\mathbb{R}$ which is defined by $\text{gr}(\phi) = \{\,\big(q,\phi(q),g(q)\big)\,|\,q\in M\,\}$, where $g:M\rightarrow\mathbb{R}$ is the function satisfying $\phi^{\ast}\alpha=e^g\alpha$. In Section \ref{section: discriminant metric} we show, by elementary arguments, that Definition \ref{definition: definition in the introduction} makes sense, does not depend on the choice of the contact form, and always gives rise to a non-degenerate bi-invariant metric on $\widetilde{\text{Cont}_0}(M,\xi)$.

Note that an analogous definition would not work in the symplectic case: for any Hamiltonian isotopy $\{\varphi_t\}$ of a compact symplectic manifold $(W,\omega)$, the union $\bigcup_t\text{gr}(\varphi_t)$ in $\overline{W}\times W$ will never be embedded, because of the Arnold conjecture.

To understand the geometric meaning of the discriminant metric, and the motivation for its name, note that a union $\bigcup_t\text{gr}(\phi_t)$ fails to be embedded if for some values $t_0$ and $t_1$ the graph of $\phi_{t_0}$ intersects the graph of $\phi_{t_1}$. Equivalently, this happens if and only if $\phi_{t_0}^{\phantom{t}-1} \circ \phi_{t_1}$ has a \textit{discriminant point}.  Recall that a point $q$ of $M$ is called a discriminant point of a contactomorphism $\phi$ if $\phi(q)=q$ and $g(q)=0$, where $g:M\rightarrow\mathbb{R}$ is the function satisfying $\phi^{\ast}\alpha=e^g\alpha$. This notion is independent of the choice of contact form, and was introduced by Givental in \cite{Giv}. Recall also that a point $q$ of $M$ is called a \textit{translated point} of $\phi$ (with respect to a chosen contact form) if $q$ and $\phi(q)$ belong to the same Reeb orbit, and $g(q)=0$. Thus, discriminant points are translated points that are also fixed points. As it is discussed in \cite{mio5}, translated points seem to satisfy an analogue of the Arnold conjecture. At least in the case of a $\mathcal{C}^0$-small contact isotopy $\{\phi_t\}$ (but possibly also in the general case) every $\phi_t$ always has translated points (with respect to any contact form), at least as many as the minimal number of critical points of a function on $M$. Note however that the contactomorphisms $\phi_t$ do not necessarily have discriminant points. On the other hand, discriminant points, in contrast to translated points, are invariant by conjugation: if $q$ is a discriminant point of $\phi$ then $\psi(q)$ is a discriminant point of $\psi\phi\psi^{-1}$, for any other contactomorphism $\psi$. Thus, while in the symplectic case fixed points at the same time persist under Hamiltonian isotopies (because of the Arnold conjecture) and are invariant by conjugation,  in the contact case these two phenomena split: translated points persist by contact isotopy but are not invariant by conjugation, while discriminant points are invariant by conjugation but do not persist under contact isotopy. Roughly speaking, this is what gives room for the definition of the discriminant metric to work in the contact case: since discriminant points do not persist under contact isotopy, Definition \ref{definition: definition in the introduction} makes sense (see Lemma \ref{lemma: decomposition}). On the other hand, the fact that discriminant points are invariant by conjugation implies that the discriminant metric is bi-invariant.

In contrast to fixed points of contactomorphisms (that are completely flexible), translated and discriminant points turn out to be related to interesting contact rigidity phenomena. For example, translated and discriminant points for contactomorphisms of $\mathbb{R}^{2n}\times S^1$ play a crucial role in the proof of the contact non-squeezing theorem \cite{EKP, mio1}, while translated and discriminant points of contactomorphisms of $\mathbb{R}P^{2n+1}$ are intimately related to Givental's non-linear Maslov index \cite{Giv}. In view of these examples we believe that the discriminant metric should be an interesting object to study, since its properties on a given contact manifold might reflect the existence of contact rigidity phenomena such as contact non-squeezing, orderability, or the existence of quasimorphisms on the contactomorphism group.

An important question about the discriminant metric is to understand for which contact manifolds it is unbounded. In this paper we prove the following results.

\begin{thm}\label{theorem: main in introduction}
\begin{enumerate}
\renewcommand{\labelenumi}{(\roman{enumi})}
\item The discriminant metric is bounded for the standard contact structures on the Euclidean space $\mathbb{R}^{2n+1}$ and on the sphere $S^{2n+1}$.
\item The discriminant metric is unbounded for the standard contact structure on $\mathbb{R}^{2n}\times S^1$.
\item The discriminant metric is unbounded for the standard contact structure on the projective space $\mathbb{R}P^{2n+1}$.
\end{enumerate}
\end{thm}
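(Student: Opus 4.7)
My plan splits naturally into the boundedness half (i) and the unboundedness half (ii)--(iii).

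For part (i) on $\mathbb{R}^{2n+1}$, I would exploit the portability of the standard contact structure: the contact vector fields $\partial_{x_k}$ (with contact Hamiltonian $-y_k$) generate genuine translations, so any compact set can be displaced from itself by a contactomorphism $\psi$. If $\phi$ is compactly supported in $B$ and $\psi$ is such a translation with $\psi(B)\cap B=\emptyset$, then $\psi\phi\psi^{-1}$ commutes with $\phi$, and one can run a Burago--Ivanov--Polterovich-style argument in the contact category: setting $\Phi_N=\prod_{k=0}^{N-1}\psi^k\phi\psi^{-k}$ one obtains the commutator identity $[\Phi_N,\psi]=\phi\cdot\psi^N\phi^{-1}\psi^{-N}$, which combined with the bi-invariance and integrality of the discriminant norm forces a uniform bound independent of $\phi$. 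For $S^{2n+1}$, I would use instead the Reeb flow $\{R_t\}_{t\in[0,2\pi]}$ of the standard contact form: since the Reeb period is $2\pi$, for any $s\in(0,2\pi)$ the map $R_s$ has no fixed point and hence no discriminant point, so $\bigcup_{s\in[a,b]}\text{gr}(R_s)$ is embedded whenever $[a,b]\subset(0,2\pi)$. The Eliashberg--Kim--Polterovich non-orderability theorem then tells us that for $n\geq 1$ the Reeb loop is contractible in $\widetilde{\text{Cont}_0}(S^{2n+1})$, so I may insert full Reeb rotations into any representative at no cost. Together these observations let me split any given isotopy into a bounded number of graph-embedded pieces, most of which are Reeb arcs, yielding the desired bound.

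For parts (ii) and (iii), my plan is to produce unbounded $\mathbb{Z}$-valued functionals on $\widetilde{\text{Cont}_0}$ that are Lipschitz with respect to the discriminant norm. The functionals already exist in the literature: for $\mathbb{R}^{2n}\times S^1$ they are the integer-valued invariants of the second author from \cite{mio2}, and for $\mathbb{R}P^{2n+1}$ they are the non-linear Maslov index of Givental. Both are obtained by min-max selectors applied to a generating function for the Legendrian lift of $\text{gr}(\phi)$ in the contact product, and both are already known to be unbounded on the respective group. The new content is the Lipschitz step: if $\bigcup_{t\in[t_i,t_{i+1}]}\text{gr}(\phi_t)$ is embedded then no discriminant point appears along the piece, which under the generating-function dictionary says that no critical value of the underlying family crosses zero; a min-max selector anchored at the zero level can then change by at most a universal constant, controlled by the behaviour of the generating function at infinity rather than by interior critical-value crossings.

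The main obstacle will be this Lipschitz estimate. For the Sandon invariants on $\mathbb{R}^{2n}\times S^1$ I plan to compare generating functions of $\phi_{t_i}$ and $\phi_{t_{i+1}}$ via a fibered generating function of the composition $\phi_{t_i}^{-1}\phi_{t_{i+1}}$, reducing matters to showing that when this composition has no discriminant point its generating-function-derived integer invariants are pinned within a bounded window. The Maslov case on $\mathbb{R}P^{2n+1}$ follows a parallel argument through Givental's cohomological formulation of the non-linear Maslov index as a spectral selector. Once the Lipschitz bound is secured, the known unboundedness of the numerical invariants immediately forces the discriminant metric to be unbounded, completing (ii) and (iii).
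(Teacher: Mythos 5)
Your plan for parts (ii) and (iii) is essentially the paper's: the functionals are indeed $c^+$ from \cite{mio1,mio2} for $\mathbb{R}^{2n}\times S^1$ and Givental's non-linear Maslov index for $\mathbb{R}P^{2n+1}$, and the whole content is the Lipschitz step you isolate. Concretely, an embedded piece advances $\lceil c^+\rceil$ by at most $1$ because a translated point of integer action is a discriminant point ($1$-periodicity of the Reeb flow), and advances $\mu$ by at most a fixed multiple of $n$ because the cohomological index of $\{f_t\le 0\}$ can only jump when a critical value of the conical generating function crosses $0$, the jump being bounded by the index of the discriminant set, with an extra defect of $2n$ from the composition formula. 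So (ii) and (iii) are correct in outline.

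Part (i) contains genuine gaps. For $\mathbb{R}^{2n+1}$, your commutator identity only yields $\nu\bigl(\phi\cdot\psi^N\phi^{-1}\psi^{-N}\bigr)=\nu([\Phi_N,\psi])\le 2\nu(\psi)$; attempting to extract $\nu(\phi)$ gives $\nu(\phi)\le 2\nu(\psi)+\nu(\psi^N\phi\psi^{-N})=2\nu(\psi)+\nu(\phi)$, which is vacuous, and neither bi-invariance nor integrality rescues it (the Hofer norm on $\mathrm{Ham}^c(\mathbb{R}^{2n})$ shows that displaceability plus conjugation-invariance cannot alone force boundedness). Making a Burago--Ivanov--Polterovich swindle work would require an infinite product of shrunken conjugates converging to a compactly supported contactomorphism plus a bounded fragmentation norm, and a lift of all identities to the universal cover --- none of which you set up. The paper's mechanism is different and elementary: it exploits unboundedness in the \emph{Reeb} direction, not displacement in $x$. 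Composing $\{\phi_t\}$ with the flow of a Hamiltonian $\rho(\tfrac{x^2+y^2}{R})$ (cut off in $z$), whose contact vector field has strictly positive $\partial/\partial z$-component on the support, one writes the class as the concatenation of one positive and one negative embedded piece once $\rho,R$ are large enough that every point is pushed up in $z$ by more than the total $z$-displacement of $\{\phi_t\}$; hence the zigzag norm is at most $2$.

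For $S^{2n+1}$ your sketch fails at two points. First, Eliashberg--Kim--Polterovich prove the existence of \emph{some} positive contractible loop, not that the Reeb loop is contractible; in fact for $S^3$ the Reeb loop represents twice the generator of $\pi_1(U(2))\cong\pi_1\bigl(\mathrm{Cont}_0(S^3)\bigr)$ and is not contractible, so you cannot insert full Reeb rotations at no cost. Second, even granting that, you give no mechanism for cutting an arbitrary isotopy into boundedly many embedded pieces: positivity does not imply embeddedness on a closed manifold, and the hard pieces are precisely the ones that are not Reeb arcs. The paper instead localizes: cutting off the Hamiltonian near the trajectory $\bigcup_t\phi_t(q)$ of a point $q$ inside a Darboux ball, one writes $\phi_t=v_t\circ u_t$ with $v_t$ supported in a ball and $u_t$ supported in the complement of a neighborhood of $q$ (again a Darboux ball, since $S^{2n+1}$ minus a point is contactomorphic to $\mathbb{R}^{2n+1}$), and then applies the $\mathbb{R}^{2n+1}$ bound to each factor to get zigzag norm at most $4$.
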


Note that the distinction between boundedness and unboundedness is especially important when dealing with integer-valued metrics. Indeed, if an integer-valued metric is bounded then it is equivalent to the trivial metric, i.e. the metric for which any two distinct points are always at distance one from each other. The notion of equivalence we use here is the one introduced by Burago, Ivanov and Polterovich \cite{BIP}: given two bi-invariant metrics on the same group we call them equivalent if their ratio is bounded away from zero and from infinity. Note also that boundedness of the discriminant metric for $S^{2n+1}$ is consistent with the fact, proved by Fraser, Polterovich and Rosen \cite{FPR}, that any non-degenerate bi-invariant metric on the standard contact sphere is necessarily equivalent to the trivial metric.

Boundedness for $\mathbb{R}^{2n+1}$ and $S^{2n+1}$ is proved in Section \ref{section: boundedness} by elementary arguments. Unboundedness for $\mathbb{R}^{2n}\times S^1$ and $\mathbb{R}P^{2n+1}$ is proved in Sections \ref{section: RxS} and \ref{section: projective space} respectively, using generating functions.

The difference between the cases of $\mathbb{R}^{2n+1}$ and $\mathbb{R}^{2n}\times S^1$ is similar to the phenomenon discussed in \cite{mio1}. While in $\mathbb{R}^{2n+1}$ we do not have any contact rigidity result (except for the existence of a partial order on the contactomorphism group \cite{B}), 1-periodicity of the Reeb flow in $\mathbb{R}^{2n}\times S^1$ makes it possible to construct \textit{integer-valued} spectral invariants for contactomorphisms and, using them, an integer-valued contact capacity for domains. The spectral invariants and the contact capacity can then be used to define an integer-valued bi-invariant metric on the contactomorphism group of $\mathbb{R}^{2n}\times S^1$ \cite{mio2} and to prove the contact non-squeezing theorem. Unboundedness of the discriminant metric on $\mathbb{R}^{2n}\times S^1$ is also proved using the spectral invariants defined in \cite{mio1} and thus it relies crucially on the 1-periodicity of the Reeb flow. This might suggest that the discriminant metric should always be unbounded whenever there is a 1-periodic Reeb flow, but the case of $S^{2n+1}$, where the discriminant metric is bounded, shows that this is not true in general. On the other hand, the discriminant metric is unbounded on $\mathbb{R}P^{2n+1}$: this is proved using generating functions in the setting developed by Givental \cite{Giv}, and relies on the properties of the cohomological length of subsets of projective spaces. Note that the same contrast between the cases of $S^{2n+1}$ and $\mathbb{R}P^{2n+1}$ also appears in the context of orderability: $\mathbb{R}P^{2n+1}$ is orderable, while $S^{2n+1}$ is not. Moreover, also the construction of the non-linear Maslov index, which gives a quasimorphism on the universal cover of the contactomorphism group, works for $\mathbb{R}P^{2n+1}$ but not for $S^{2n+1}$. To summarize, the difference between $\mathbb{R}^{2n+1}$ and $\mathbb{R}^{2n}\times S^1$ on the one hand and between $S^{2n+1}$ and $\mathbb{R}P^{2n+1}$ on the other hand shows that, in both cases of $\mathbb{R}^{2n+1}$ and $S^{2n+1}$, adding some topology to the underlying manifold has the effect of making the discriminant metric unbounded and of making some interesting contact rigidity phenomena appear. This interplay between contact rigidity phenomena and the topology of the underlying manifold is quite mysterious and, we believe, an interesting subject for further research.

Using the fact that the discriminant metric is bounded on $\mathbb{R}^{2n+1}$, we observe in Section \ref{section: fragmentation} that the discriminant metric always gives a lower bound for the contact fragmentation norm. Recall that the fragmentation norm on the diffeomorphism group of a smooth compact manifold $M$ is defined as follows. By the fragmentation lemma \cite{Ban} every diffeomorphism $\phi$ of $M$ can be written as a finite composition $\phi=\phi_1\circ\cdots\circ\phi_{N}$ such that each factor is supported in an embedded ball. The fragmentation norm of $\phi$ is then defined to be the minimal number of factors in such a decomposition. By results of Burago, Ivanov and Polterovich \cite{BIP} and Tsuboi \cite{Ts} we know that for any odd-dimensional compact manifold $M$ the smooth fragmentation norm is bounded. The fragmentation lemma has an analogue also for Hamiltonian and contact diffeomorphisms (see \cite{Ban} and \cite{Ry} respectively). While the Hamiltonian fragmentation norm has been studied in the special case of tori by Entov and Polterovich \cite{EnP} and Burago, Ivanov and Polterovich \cite{BIP} and for cotangent bundles by Monzner, Vichery and Zapolsky \cite{MVZ}, we did not find any result in the literature on the contact fragmentation norm. As a corollary of Theorem \ref{theorem: main in introduction} we get the following result.

\begin{cor}\label{corollary: corollary in introduction}
The contact fragmentation norm is unbounded for the standard contact structures on $\mathbb{R}^{2n}\times S^1$ and $\mathbb{R}P^{2n+1}$.
\end{cor}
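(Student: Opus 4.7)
The plan is to deduce the corollary from Theorem~\ref{theorem: main in introduction} by establishing the general inequality $\nu_{\text{disc}} \leq C \cdot \nu_{\text{frag}}$, where $C$ is the universal upper bound on the discriminant norm for the standard contact $\mathbb{R}^{2n+1}$ provided by part~(i) of the theorem. Once this bound is in hand, Theorem~\ref{theorem: main in introduction}(ii)-(iii) immediately forces $\nu_{\text{frag}}$ to be unbounded on $\mathbb{R}^{2n}\times S^1$ and $\mathbb{R}P^{2n+1}$.

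Concretely, given $\phi\in\text{Cont}_0(M,\xi)$ with a fragmentation decomposition $\phi=\phi_1\circ\cdots\circ\phi_N$, each factor $\phi_i$ is supported in an embedded ball $B_i\subset M$. Via a Darboux chart $\iota_i\colon B_i\hookrightarrow(\mathbb{R}^{2n+1},\xi_{\text{st}})$, the contactomorphism $\phi_i$ pushes forward to a compactly supported contactomorphism $\widehat{\phi_i}$ of $\mathbb{R}^{2n+1}$. By Theorem~\ref{theorem: main in introduction}(i), there is a compactly supported contact isotopy from the identity to $\widehat{\phi_i}$ decomposable into at most $C$ sub-isotopies each having embedded union of graphs. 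Pulling this isotopy back by $\iota_i$ and extending by the identity yields an isotopy $\{\phi_i^t\}$ of $M$; since the pieces are identically the identity away from $B_i$ (contributing only the diagonal at height $0$, which is embedded), the same decomposition gives $\nu_{\text{disc}}(\widetilde{\phi_i})\leq C$ for the class $\widetilde{\phi_i}\in\widetilde{\text{Cont}_0}(M,\xi)$. Concatenating the $N$ isotopies produces a lift $\widetilde{\phi}$ of $\phi$ with $\nu_{\text{disc}}(\widetilde{\phi})\leq NC$ by the triangle inequality for the bi-invariant discriminant metric.

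Passing to $\text{Cont}_0(M,\xi)$ by taking the infimum over lifts gives $\overline{\nu_{\text{disc}}}(\phi)\leq C\cdot\nu_{\text{frag}}(\phi)$, so any sequence on which $\overline{\nu_{\text{disc}}}\to\infty$ must have $\nu_{\text{frag}}\to\infty$. The main point requiring care is that Theorem~\ref{theorem: main in introduction}(ii)-(iii) is naturally formulated on the universal cover, whereas the fragmentation norm lives on $\text{Cont}_0(M,\xi)$, so one must verify that the unboundedness produced in Sections~\ref{section: RxS} and~\ref{section: projective space} actually descends to $\overline{\nu_{\text{disc}}}$. I expect this is automatic: the generating-function invariants underlying those arguments detect spectral or cohomological data of the underlying contactomorphism (well-defined up to at most a bounded $\pi_1$ ambiguity), so the lower bounds they furnish for $\nu_{\text{disc}}$ on the universal cover transfer to the same kind of lower bound for $\overline{\nu_{\text{disc}}}$. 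Granting this last point, the corollary is immediate from the displayed inequality.
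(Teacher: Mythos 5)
Your overall strategy is the paper's: fragment, bound the (discriminant/zigzag) norm of each factor supported in a ball by a universal constant, and conclude by the triangle inequality together with unboundedness of the discriminant norm (this is Proposition \ref{proposition: fragmentation} combined with Lemma \ref{lemma: ball} and Theorem \ref{theorem: main in introduction}). However, there is a genuine gap in your key transfer step. You claim that if the isotopy $\{\phi_i^t\}$ obtained by pushing the $\mathbb{R}^{2n+1}$-decomposition back into $B_i$ and extending by the identity has embedded pieces inside $B_i$, then the same pieces are embedded in $M$, "since the pieces are identically the identity away from $B_i$ (contributing only the diagonal at height $0$, which is embedded)." This is false: by the paper's own characterization, a piece $\{\phi_t\}_{t\in[a,b]}$ is embedded if and only if $\mathrm{gr}(\phi_{t_0})$ and $\mathrm{gr}(\phi_{t_1})$ are disjoint for all $t_0\neq t_1$, and for an isotopy that is the identity on $M\setminus B_i$ all these graphs contain the diagonal points $(q,q,0)$ for $q\in M\setminus B_i$, so every pair of graphs intersects and $\bigcup_t\mathrm{gr}(\phi_t)$ is never embedded (the map $(q,t)\mapsto(q,\phi_t(q),g_t(q))$ is not injective). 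The same problem already afflicts the auxiliary isotopy of Proposition \ref{proposition: euclidean space} once it is cut off to live inside a ball of a closed manifold.

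The paper repairs exactly this point in Lemma \ref{lemma: ball}: one composes the "large push in the $z$-direction inside the ball" with the flow of $\lambda' R$ for a fixed global Reeb field $R$ and small $\lambda'>0$. The resulting $\{\varphi_t\}$ is positive and embedded on all of $M$ (not just on the ball), $\{\varphi_t^{\,-1}\circ(\varphi_1\circ\phi_1)\}$ is negative and embedded, and $\{\varphi_t\circ\phi_t\}$ is positive and embedded, giving zigzag norm at most $2$ for an isotopy supported in a Darboux ball; hence $\nu_{\text{zigzag}}\leq 2\,\nu_{\text{frag}}$ and the corollary follows. A secondary remark: your worry about descending from $\widetilde{\text{Cont}_0}$ to $\text{Cont}_0$ is resolved more simply than you suggest, because the statement (and the paper's proof) concerns the fragmentation norm on the universal cover; note that for $\mathbb{R}P^{2n+1}$ the descent is in fact \emph{not} automatic, since the unbounded family there consists of iterates of the periodic Reeb flow, which are loops and hence project to the identity in $\text{Cont}_0$.
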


If in Definition \ref{definition: definition in the introduction} we also require that every embedded piece $\{\phi_t\}_{t\in[t_i,t_{i+1}]}$ be either positive or negative then we get another bi-invariant metric (that we call the \textit{zigzag metric}) on $\widetilde{\text{Cont}_0}(M,\xi)$. We suspect that this metric should be always equivalent, in the sense of \cite{BIP}, to the discriminant metric. A more interesting variation of the discriminant metric is what we call the \textbf{oscillation pseudo-metric}. The oscillation pseudo-norm of an element $[\{\phi_t\}]$ of $\widetilde{\text{Cont}_0}(M,\xi)$ is defined to be the sum of the minimal number of positive pieces and the minimal number of negative pieces in a decomposition of a representative $\{\phi_t\}$ as a concatenation of positive and negative embedded pieces. The oscillation pseudo-norm of an element of $\widetilde{\text{Cont}_0}(M,\xi)$ is always smaller than or equal to, but in general different from, its zigzag norm. The oscillation pseudo-metric is well-defined and bi-invariant for every contact manifold $(M,\xi)$, and it is non-degenerate if and only if $(M,\xi)$ is orderable. In this case it is compatible with the partial order and hence it gives to $\widetilde{\text{Cont}_0}(M,\xi)$ the structure of a partially ordered metric space (see the discussion in Section \ref{section: oscillation}).

In the last section we study the \textbf{Legendrian discriminant length}. Given a Legendrian isotopy $\{L_t\}_{t\in[0,1]}$ in a contact manifold $M$, its discriminant length is the minimal number $N$ needed to write $\{L_t\}_{t\in[0,1]}$ as a concatenation of pieces $\{L_t\}_{t\in[t_i,t_{i+1}]}$, $i=0,\cdots,N-1$ such that each piece is embedded. As for contact isotopies we can then also define the zigzag and the oscillation lengths of $\{L_t\}_{t\in[0,1]}$. As we show in Section \ref{section: Legendrian}, the generating functions methods used in Sections \ref{section: RxS} and \ref{section: projective space} can also be applied to prove unboundedness of the Legendrian discriminant length on $T^{\ast}B \times S^1$ for any compact smooth manifold $B$, on $\mathbb{R}P^{2n+1}$ and on certain $3$-dimensional circle bundles.

The article is organized as follows. In the next two sections we will discuss in more details the construction of the discriminant, zigzag and oscillation (pseudo-)norms on the universal cover of the contactomorphism group. In Section \ref{section: boundedness} we prove that these (pseudo-)norms are bounded on $\mathbb{R}^{2n+1}$ and $S^{2n+1}$, and in Section \ref{section: fragmentation} we use boundedness on $\mathbb{R}^{2n+1}$ to show that the contact fragmentation norm is always bounded below by twice the zigzag norm. In Sections \ref{section: RxS} and \ref{section: projective space} respectively we prove that the discriminant, zigzag and oscillation norms are unbounded for $\mathbb{R}^{2n}\times S^1$ and $\mathbb{R}P^{2n+1}$. As a consequence, the contact fragmentation norm is also unbounded in these two cases. In the last section we discuss the Legendrian discriminant length.

\subsection*{Acknowledgments} We thank Emmanuel Giroux for discussions and support. We also thank Maia Fraser and Leonid Polterovich for their feedback to our work and for giving us a first draft  of \cite{FPR}. Moreover we thank them, and all the participants of the Workshop \textquotedblleft Contact topology in higher dimensions\textquotedblright at the American Institute of Mathematics on May 2012, for all the exciting discussions during that week. We also would like to thank the referee for many very useful comments (and in particular for suggesting us to reformulate our definitions and results by putting them in the context of word metrics). Finally, the second author would like to thank the Institute for Advanced Study in Princeton for the very pleasant atmosphere and working conditions during her visit in the spring term of 2012.

\section{The discriminant metric}\label{section: discriminant metric}

Let $\big(M,\xi=\text{ker}(\alpha)\big)$ be a (cooriented) contact manifold.

We will define in this section a bi-invariant metric on the universal cover $\widetilde{\text{Cont}_0^{\phantom{0}c}}(M,\xi)$ of the identity component of the group of compactly supported contactomorphisms of $(M,\xi)$. To simplify the exposition, in the following discussion we will just assume that $M$ is compact. In the case of compactly supported contactomorphisms of a non-compact contact manifold all arguments and definitions are analogous, by considering only the interior of the support.

Recall that we can associate to any contactomorphism $\phi$ of $M$ its graph $\text{gr}(\phi)$ in the contact product $M\times M\times \mathbb{R}$. This is defined to be the submanifold $\text{gr}(\phi)=\{\,\big(q,\phi(q),g(q)\big)\,|\,q\in M\,\}$ of $M\times M\times \mathbb{R}$,  where $g:M\rightarrow\mathbb{R}$ is the function satisfying $\phi^{\ast}\alpha=e^g\alpha$. The contact structure on $M\times M\times \mathbb{R}$ is given by the kernel of the 1-form $A=e^{\theta}\alpha_1-\alpha_2$ where $\alpha_1$ and $\alpha_2$ are the pullback of $\alpha$ with respect to the projections of $M\times M\times \mathbb{R}$ onto the first and second factors respectively, and $\theta$ is the $\mathbb{R}$-coordinate.

\begin{lemma}\label{lemma: decomposition}
Let $\{\phi_t\}_{t\in [0,1]}$ be a contact isotopy of $M$. After perturbing $\{\phi_t\}$ in the same homotopy class with fixed endpoints, there exist a positive integer $N$ and a subdivision $0=t_0<t_1<\cdots<t_{N-1}<t_N=1$ such that for all $i=0,\cdots,N-1$ the submanifold $\bigcup_{t\in[t_i,t_{i+1}]}\text{gr}(\phi_t)$ of $M\times M\times \mathbb{R}$ is embedded.
\end{lemma}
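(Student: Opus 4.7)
The plan is to reduce the problem to a local embedding statement near each $t_0\in[0,1]$ by means of a Weinstein Legendrian neighborhood, then to achieve the necessary genericity by a small perturbation, and finally to conclude by compactness of $[0,1]$.

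For the local analysis, fix $t_0\in[0,1]$. The Legendrian $\text{gr}(\phi_{t_0})\subset M\times M\times\mathbb{R}$ is diffeomorphic to $M$ via the projection onto the first factor, so by the Weinstein Legendrian neighborhood theorem a tubular neighborhood of it is contactomorphic to a neighborhood of the zero section of $J^1(M)$. For $t$ close enough to $t_0$, the graph $\text{gr}(\phi_t)$ lies in this neighborhood and, since its projection onto the first $M$-factor is still a diffeomorphism, it is the $1$-jet graph of a smooth function $f_t\colon M\to\mathbb{R}$ with $f_{t_0}=0$. The derivative $g(q):=\partial_t f_t|_{t=t_0}(q)$ is, up to this identification, the contact Hamiltonian of $\{\phi_t\}$ at time $t_0$ viewed as a function on $M$.

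The key local claim is that if $0$ is a regular value of $g$, then for some $\epsilon>0$ the assembly map $(t,q)\mapsto(q,df_t(q),f_t(q))$ is an embedding of $[t_0-\epsilon,t_0+\epsilon]\times M$ into $J^1(M)$. Indeed, writing $f_t=(t-t_0)g+O((t-t_0)^2)$, a first-order Taylor expansion gives
\[
f_t(q)-f_s(q)=(t-s)\,g(q)+\text{h.o.t.},\qquad df_t(q)-df_s(q)=(t-s)\,dg(q)+\text{h.o.t.},
\]
so a simultaneous vanishing for $s\neq t$ close to $t_0$ would force $g(q)=0=dg(q)$, contradicting regularity. Immersiveness in the $t$-direction is controlled by the pair $(g,dg)$ and follows from the same regularity assumption, and compactness of $M$ upgrades the conclusion to a uniform $\epsilon$.

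To ensure that $0$ is a regular value of $g=H_{t_0}$ for enough times $t_0$, I would perform a small $C^\infty$ perturbation of $\{\phi_t\}$, preserving its endpoints and its homotopy class, so that the map $(t,q)\mapsto H_t(q)$ has $0$ as a regular value on $[0,1]\times M$; this is a standard Sard-type transversality argument. The upshot is that outside a finite set of exceptional times the function $H_t$ itself has $0$ as a regular value, and at the exceptional times the singularity has a Morse-type normal form, which one checks still permits covering a full neighborhood of the bad time by a single embedded piece of the family. Compactness of $[0,1]$ then delivers a finite subcover by embedded intervals, and refining it yields the desired subdivision $0=t_0<t_1<\cdots<t_N=1$.

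The main technical obstacle is the global perturbation step: one must arrange the transversality condition at \emph{every} time $t\in[0,1]$ by a single perturbation, while respecting the fixed endpoints and the prescribed homotopy class, and then argue that the finitely many exceptional times (where $0$ is a critical value of $H_t$) can nonetheless be absorbed into an embedded interval by exploiting the controlled normal form of the degeneracy.
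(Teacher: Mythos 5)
Your local claim is essentially right: if $0$ is a regular value of $H_{t_0}$, then the family of $1$-jets is embedded over a short interval around $t_0$, by the Taylor-plus-compactness argument you sketch. The genuine gap is in your treatment of the exceptional times, and it is not merely an unchecked step: the assertion that a full neighborhood of a bad time can be covered by a \emph{single} embedded piece is false. Take the local model $H_t(q)=(t-t_0)+q^2$ near a generic (fold-type) degenerate zero; then to leading order $f_t(q)=\tfrac{(t-t_0)^2}{2}+(t-t_0)q^2$, so for every $\delta>0$ one has $j^1f_{t_0-\delta}(0)=j^1f_{t_0+\delta}(0)=\big(0,0,\tfrac{\delta^2}{2}\big)$, i.e.\ $\mathrm{gr}(\phi_{t_0-\delta})$ and $\mathrm{gr}(\phi_{t_0+\delta})$ intersect; this branch of intersections with $s\neq t$ is transverse and survives the higher-order terms. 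Hence \emph{no} interval containing $t_0$ in its interior is embedded. The natural repair is to make the exceptional times subdivision points and prove one-sided embeddedness of $[t_0-\epsilon,t_0]$ and $[t_0,t_0+\epsilon]$ from the normal form --- true in the model above, but this is exactly the analysis your proposal defers. Two further points are also left open: finiteness of the exceptional times requires genericity of the projection $\{H=0\}\to[0,1]$ (regularity of $0$ for the map $(t,q)\mapsto H_t(q)$ alone gives only a measure-zero bad set), and the perturbation must respect the fixed endpoint $\phi_1$; appending a small correction path at $t=1$ reintroduces the very problem being solved, since $\mathcal{C}^1$-small does not by itself imply embedded.

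For comparison, the paper sidesteps transversality entirely: it replaces $\{\phi_t\}$, within its homotopy class, by the concatenation of $\{\varphi_t\circ\phi_t\}$ and $\{\varphi_t^{\,-1}\circ(\varphi_1\circ\phi_1)\}$ for a sufficiently positive $\{\varphi_t\}$, so that each piece is \emph{monotone}; in the Weinstein chart the Hamilton--Jacobi equation then makes $t\mapsto f_t$ strictly monotone on $\mathcal{C}^1$-small subpieces, and strictly monotone families of functions have pairwise disjoint $1$-jets. Monotonicity is precisely what excludes the degenerate zeros of $H_t$ that produce your exceptional times, so I would recommend adopting that reduction rather than patching the transversality route.
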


In the following, we will say that a contact isotopy $\{\phi_t\}$ is \textit{embedded} if the submanifold $\bigcup_t\text{gr}(\phi_t)$ of $M\times M\times \mathbb{R}$ is embedded. Recall also that a contact isotopy is said to be \textit{positive} (respectively \textit{negative}) if it moves every point in a direction positively (negatively) transverse to the contact distribution, or equivalently if it is generated by a positive (negative) contact Hamiltonian. Moreover, a Legendrian isotopy $\{L_t\}$ is said to be positive (negative) if it can be extended to a contact isotopy which is generated by a contact Hamiltonian $H_t$ with $H_t|_{L_t}$ positive (negative). Note that if a contact isotopy $\phi_t$ of $M$ is positive (negative) then the Legendrian isotopy $\{\text{gr}(\phi_t)\}$ in $M\times M\times\mathbb{R}$ is negative (positive). Indeed, given a contactomorphism $\phi$ of $M$ with $\phi^{\ast}\alpha=e^g\alpha$ we can consider the induced contactomorphism $\overline{\phi}$ of $M\times M\times\mathbb{R}$ defined by $\overline{\phi}(q_1,q_2,\theta) = \big(q_1,\phi(q_2), \theta+g(q_2)\big)$. Then $\text{gr}(\phi) = \overline{\phi}(\Delta)$ where $\Delta = \{\,\big(q,q,0\big)\,|\,q\in M\,\}$. If $\{\phi_t\}$ is a positive (negative) contact isotopy then the induced contact isotopy $\{\overline{\phi_t}\}$ of $M\times M\times\mathbb{R}$ is negative (positive). Hence $\{\text{gr}(\phi_t)\} = \{\overline{\phi_t}(\Delta)\}$ is a negative (positive) Legendrian isotopy.

\begin{proof}[Proof of Lemma \ref{lemma: decomposition}]
Note first that for any other contact isotopy $\{\varphi_t\}$ the initial $\{\phi_t\}$ coincides with $\{\varphi_t^{\phantom{t}-1}\circ(\varphi_t\circ\phi_t)\}$, and so it is in the same homotopy class as the concatenation of $\{\varphi_t\circ\phi_t\}$ and $\{\varphi_t^{\phantom{t}-1}\circ(\varphi_1\circ\phi_1)\}$. If we take $\{\varphi_t\}$ to be positive then $\{\varphi_t^{\phantom{t}-1}\circ(\varphi_1\circ\phi_1)\}$ is negative and, if the Hamiltonian generating $\{\varphi_t\}$ is sufficiently big, $\{\varphi_t\circ\phi_t\}$ is positive. Thus it is enough to show that if $\{\phi_t\}_{t\in[0,1]}$ is a positive contact isotopy which is sufficiently $\mathcal{C}^1$-small then it is embedded (the case of a negative contact isotopy is of course analogous). By Weinstein's theorem, a small neighborhood of the diagonal $\Delta$ in $M\times M \times \mathbb{R}$ is contactomorphic to a neighborhood of the $0$-section of $J^1\Delta$. Since all the $\phi_t$ are $\mathcal{C}^1$-small their graphs belong to this small neighborhood of $\Delta$, and moreover they correspond to sections of $J^1\Delta$. But all Legendrian sections of $J^1\Delta$ are 1-jets of functions, so we see that the contact isotopy $\{\phi_t\}$ of $M$ corresponds to a Legendrian isotopy of $J^1\Delta$ of the form $\{j^1f_t\}$ for a certain family of functions $f_t$ on $\Delta$. As discussed above, since $\{\phi_t\}$ is positive we have that $\{j^1f_t\}$ is a negative Legendrian isotopy. We will now show that this implies that the family $f_t$ is (strictly) decreasing, and therefore that $\bigcup_t j^1f_t$ is embedded. Let $\{\Psi_{\phi_t}\}$ be a contact isotopy of $J^1\Delta$ that locally coincides with the contact isotopy $\{\overline{\phi_t}\}$ of $M\times M\times\mathbb{R}$ induced by $\{\phi_t\}$. Then we have that $\Psi_{\phi_t}(j^10) = j^1f_t$ for all $t$. Denote by $H_t: J^1\Delta\rightarrow\mathbb{R}$ the Hamiltonian of $\{\Psi_{\phi_t}\}$. Note that $H_t\big(j^1(f_t(x)\big) < 0$ for all $x\in\Delta$, because $\{\Psi_{\phi_t}\}$ is negative since $\{\phi_t\}$ is positive. Moreover, by the Hamilton-Jacobi equation (see \cite[Section 46]{A} or \cite{Ch}) we have
$$H_t\big(j^1(f_t(x)\big) = \frac{d}{dt} f_t(x).$$
We see thus that the family $f_t$ is decreasing. But this implies that $\bigcup_t j^1f_t$ is embedded and hence that the contact isotopy $\{\phi_t\}$ is embedded, as we wanted.
\end{proof}

Consider the subset $\widetilde{\mathcal{E}}$ of $\widetilde{\text{Cont}_0}(M,\xi)$ defined by
$$
\widetilde{\mathcal{E}} = \{\,[\{\phi_t\}_{t\in[0,1]}] \in \widetilde{\text{Cont}_0}(M,\xi)\setminus [\text{id}] \:|\: \bigcup_{t\in[0,1]}\text{gr}(\phi_t) \text{ is embedded}\,\}
$$
Lemma \ref{lemma: decomposition} proves that $\widetilde{\mathcal{E}}$ is a generating set for $\widetilde{\text{Cont}_0}(M,\xi)$.

Recall that a subset $S$ of a group $G$ is called a \textit{generating set} if $G = \bigcup_{k\geq 0} S^k$ where $S^0 = \{1\}$ and $S^k = S \cdot S^{k-1}$. If $S^{-1} = S$ then we obtain a (non-degenerate) norm on $G$ (called the \textit{word norm} with respect to the generating set $S$) by defining 
$$
\nu_S (g) := \text{min} \{\,k\geq 0\:|\: g \in S^k\,\}.
$$
Note that if $S$ is invariant by conjugation then $\nu_S$ is a conjugation-invariant norm on $G$. Recall that a function $\nu: G \rightarrow [0,\infty)$ is called a \emph{non-degenerate conjugation-invariant norm} if it satisfies the following properties:
\begin{enumerate}
\renewcommand{\labelenumi}{(\roman{enumi})}
\item (positivity) $\nu(g)\geq 0$ for all $g\in G$.
\item (non-degeneracy) $\nu(g)=0$ if and only if $g=\text{id}$.
\item (symmetry) $\nu(g)=\nu(g^{-1})$.
\item (triangle inequality) $\nu(fg)\leq \nu(f) + \nu(g)$.
\item (conjugation-invariance) $\nu(f) = \nu(gfg^{-1})$.
\end{enumerate}

Recall also that any conjugation-invariant norm $\nu$ on $G$ induces a bi-invariant metric $d_{\nu}$ on $G$, by defining $d(f,g) = \nu (f^{-1}g)$. We refer to \cite{Gal-Kedra} and the references therein for more background on bi-invariant word metrics.

\begin{defi}\label{definition: discriminant norm}
The \textbf{discriminant metric} on $\widetilde{\text{Cont}_0}(M,\xi)$ is the word metric with respect to the generating set $\widetilde{\mathcal{E}}$.
\end{defi}

In view of the above general discussion, in order to prove that this definition gives rise to a (non-degenerate) bi-invariant metric on $\widetilde{\text{Cont}_0}(M,\xi)$ we need to prove that the generating set $\widetilde{\mathcal{E}}$ is invariant by conjugation and satisfies $\widetilde{\mathcal{E}}^{-1} = \widetilde{\mathcal{E}}$. These properties can be seen as follows. Note that a contact isotopy $\{\phi_t\}_{t\in[0,1]}$ fails to be embedded if and only if there are two values $t_0$ and $t_1$ for which $\text{gr}(t_0)$ and $\text{gr}(t_1)$ intersect. Because of the following argument, this is equivalent to saying that $\text{gr}(\phi_{t_0}^{\phantom{t}-1}\circ\phi_{t_1})$ intersects the diagonal $\Delta$. Given a contactomorphism $\phi$ of $M$, consider the induced contactomorphism $\overline{\phi}$ of $M \times M \times \mathbb{R}$ (as defined above). Note that for any two contactomorphisms $\phi$ and $\psi$ we have $\overline{\psi\circ\phi} = \overline{\psi} \circ \overline{\phi}$. We see thus that $\text{gr}(\phi_{t_0}) = \overline{\phi_{t_0}}(\Delta)$ and $\text{gr}(\phi_{t_1}) = \overline{\phi_{t_1}}(\Delta)$ intersect (and thus $\{\phi_t\}$ fails to be embedded) if and only if $\overline{\phi_{t_0}^{\phantom{t}-1}\circ\phi_{t_1}}(\Delta) = \text{gr}(\phi_{t_0}^{\phantom{t}-1}\circ\phi_{t_1})$ intersects the diagonal $\Delta$. Equivalently, this also shows that $\{\phi_t\}_{t\in[0,1]}$ fails to be embedded if and only if there are two values $t_0$ and $t_1$ for which $\phi_{t_0}^{\phantom{t}-1}\circ\phi_{t_1}$ has a discriminant point. Indeed, recall that a point $q$ of $M$ is called a \emph{discriminant point} of a contactomorphism $\phi$ if $\phi(q)=q$ and $g(q)=0$ where $g:M\rightarrow\mathbb{R}$ is the function satisfying $\phi^{\ast}\alpha=e^g\alpha$. Clearly, $q$ is a discriminant point for $\phi$ if and only if $(q,q,0)$ belongs to the intersection of the graph of $\phi$ and the diagonal $\Delta$. In view of this discussion, the property $\widetilde{\mathcal{E}}^{-1} = \widetilde{\mathcal{E}}$ follows immediately from the fact that we have a 1-1 correspondence between discriminant points of a contactomorphism $\phi$ and discriminant points of $\phi^{-1}$: indeed, a point $q$ is a discriminant point of $\phi$ if and only if $\phi(q)$ is a discriminant point for $\phi^{-1}$. Similarly, the conjugation-invariance property of $\widetilde{\mathcal{E}}$ follows from the fact that we have a 1-1 correspondence between discriminant points of a contactomorphism $\phi$ and discriminant points of a conjugation $\psi\phi\psi^{-1}$: indeed, a point $q$ is a discriminant point of $\phi$ if and only if $\psi(q)$ is a discriminant point for $\psi\phi\psi^{-1}$.

Note that the definition of the discriminant length is independent of the choice of the contact form $\alpha$. Indeed, a point $q$ of $M$ is a discriminant point of $\phi$ with respect to the contact form $\alpha$ if and only if it is a discriminant point of $\phi$ with respect of any other contact form $\alpha'$ for $\xi$. To see this, write $\alpha'=e^h\alpha$ and $\phi^{\ast}\alpha=e^g\alpha$. Then $\phi^{\ast}\alpha'=e^{h\circ\phi+ g-h}\alpha'$. Since $q$ is a discriminant point of $\phi$ with respect to $\alpha$ we have that $\phi(q)=q$ and $g(q)=0$. But then we also have $(h\circ\phi+g-h)(q)=0$ and so $q$ is also a discriminant point of $\phi$ with respect to $\alpha'$.

In Sections \ref{section: boundedness}, \ref{section: RxS} and \ref{section: projective space} we will study in some special cases the problem of understanding for which contact manifolds the discriminant metric is unbounded, and hence not equivalent to the trivial metric. In Section \ref{section: boundedness} we will show that the discriminant norm is bounded for Euclidean space $\mathbb{R}^{2n+1}$ and for the sphere $S^{2n+1}$. In Sections \ref{section: RxS} and \ref{section: projective space} respectively we will then use generating functions to prove that the discriminant norm is unbounded for $\mathbb{R}^{2n}\times S^1$ and for projective space $\mathbb{R}P^{2n+1}$. It would be interesting to try to apply the technology of $J$-holomorphic curves or the new microlocal theory of sheaves to study this problem for more general contact manifolds. It is not clear to us for which class of contact manifolds we should expect our metric to be unbounded. As the example of the sphere shows, the presence of a 1-periodic Reeb flow is not sufficient. On the other hand, as far as we understand, this condition might not even be necessary. Note that an important difference between the cases of $\mathbb{R}^{2n}\times S^1$ and $S^{2n+1}$, that could be behind the different behavior of the discriminant metric on these two manifolds, is the following. Although on both $\mathbb{R}^{2n}\times S^1$ and $S^{2n+1}$ we have a 1-periodic Reeb flow, in the case of $S^{2n+1}$ the Reeb flow is trivial in homology (all closed Reeb orbits are contractible) while in the case of $\mathbb{R}^{2n}\times S^1$ the Reeb flow generates the fundamental group of the manifold. In view of these examples it seems reasonable to expect that the discriminant metric should be unbounded if and only if there exists a closed non-contractible Reeb orbit with respect to some contact form.

\begin{q} Let $\alpha$ be a contact form for $(M,\xi )$ and consider the Reeb flow $\varphi_t$. If $T_0$ is the minimal period of a closed Reeb orbit then, for all $t_0\in \R$, the isotopy $\{\varphi_t\}_{t\in [t_0,t_0 +T_0)}$ is embedded. Thus, if $T$ is not a multiple of $T_0$, $\{\varphi_t\}_{t\in [0,T]}$ has discriminant length smaller or equal than $\lceil \frac{T}{T_0} \rceil$ (where $\lceil\cdot\rceil$ denotes the smallest integer which is greater or equal than a given number). Is there a condition on $(M,\xi )$ (orderability? non-existence of closed contractible Reeb orbits?...) which ensures that the length is exactly $\lceil \frac{T}{T_0} \rceil$, i.e. that $\{ \varphi_t \}_{t\in [0,T]}$ is a geodesic?
\end{q}

Another interesting question is the following. Assume that $(M,\xi)$ is orderable, i.e. that the natural relation $\leq_{EP}$ induced by positive contact isotopies is a partial order on the universal cover of the contactomorphism group \cite{EP}. In this case, is the discriminant metric compatible with the partial order $\leq_{EP}$? In other words, does $[\{\phi_t\}] \leq_{EP} [\{\psi_t\}] \leq_{EP} [\{\varphi_t\}]$ imply that $d([\{\phi_t\}],[\{\psi_t\}])\leq d([\{\phi_t\}],[\{\varphi_t\}])$? Although we cannot answer this question directly for the discriminant metric, in the next section we construct a variation of the discriminant metric, that we call the oscillation pseudo-metric, which can be easily seen to be compatible with the partial order of Eliashberg and Polterovich. We believe that the oscillation pseudo-metric should also help understanding the relation between our construction and those of the second author \cite{mio2}, Zapolsky \cite{Z}, Fraser, Polterovich and Rosen \cite{FPR} and Albers and Merry \cite{AM}.

\section{The oscillation pseudo-metric, and the relation with the partial order of Eliashberg and Polterovich}\label{section: oscillation}

We will first describe a slight variation of the discriminant metric, which we call the \textit{zigzag metric}. We will then modify the construction a bit more to obtain the oscillation pseudo-metric, which we will show to be  non-degenerate if and only if the contact manifold is orderable and in this case to be compatible with the partial order.

From the proof of Lemma \ref{lemma: decomposition} it is clear that we can represent a contact isotopy $\{\phi_t\}_{t\in [0,1]}$ of $M$ as described in the statement of Lemma \ref{lemma: decomposition} and moreover requiring that every piece  $\{\phi_t\}_{t\in[t_i,t_{i+1}]}$ be either positive or negative (not necessarily with alternating signs). Therefore the set $\widetilde{\mathcal{E}}_{\pm} = \widetilde{\mathcal{E}}_+ \cup \widetilde{\mathcal{E}}_-$ where 
$$
\widetilde{\mathcal{E}}_+ = \{\,[\{\phi_t\}_{t\in[0,1]}] \in \widetilde{\text{Cont}_0}(M,\xi) \:|\: \bigcup_{t\in[0,1]}\text{gr}(\phi_t) \text{ is embedded and }\{\phi_t\} \text{ is positive}\,\}
$$
and
$$
\widetilde{\mathcal{E}}_- = \{\,[\{\phi_t\}_{t\in[0,1]}] \in \widetilde{\text{Cont}_0}(M,\xi) \:|\: \bigcup_{t\in[0,1]}\text{gr}(\phi_t) \text{ is embedded and }\{\phi_t\} \text{ is negative}\,\}
$$
is a generating set for $\widetilde{\text{Cont}_0}(M,\xi)$. Note that $\widetilde{\mathcal{E}}_{\pm}$ is invariant by conjugation, and that $\widetilde{\mathcal{E}}_{\pm}^{\phantom{\pm}-1} = \widetilde{\mathcal{E}}_{\pm}$. We define the \emph{zigzag metric} on $\widetilde{\text{Cont}_0}(M,\xi)$ to be the word metric with respect to the generating set $\widetilde{\mathcal{E}}_{\pm}$. Note that the zigzag metric is always greater than or equal to the discriminant metric. We suspect that the discriminant and the zigzag metrics should be equivalent, in the sense of \cite{BIP}\footnote{In Section \ref{section: Legendrian} we will prove this for the Legendrian discriminant and zigzag lengths. However, it is not clear to us how the argument should be modified in the context of contactomorphisms.}.

A more interesting variation of the discriminant metric is given by the following definition.

\begin{defi}
Given an element $[\{\phi_t\}_{t\in[0,1]}]$ of $\widetilde{\text{Cont}_0}(M,\xi)$, we define $\nu^+\big([\{\phi_t\}]\big)$ to be the minimal number of positive pieces in a decomposition of $[\{\phi_t\}_{t\in[0,1]}]$ as a product of elements in $\widetilde{\mathcal{E}}_{\pm}$, and $\nu^-\big([\{\phi_t\}]\big)$ to be minus\footnote{We put a minus sign here in order to be consistent with the notation for the metrics in \cite{mio2}, \cite{Z} and \cite{FPR}.} the minimal number of negative pieces in such a decomposition. We then define the \textbf{oscillation pseudo-norm}\footnote{As in \cite{Z}, we can also define a second norm $\nu'_{\text{osc}}$ by posing $\nu'_{\text{osc}} \big([\{\phi_t\}]\big) = \text{max} \big(\nu^+\big([\{\phi_t\}]\big),-\nu^-\big([\{\phi_t\}]\big)\big)$. Note that $\nu_{\text{osc}}$ and $\nu'_{\text{osc}}$ are equivalent since for every $[\{\phi_t\}_{t\in[0,1]}]$ we have that $\nu'_{\text{osc}} \big([\{\phi_t\}]\big) \leq \nu_{\text{osc}} \big([\{\phi_t\}]\big) \leq 2\nu'_{\text{osc}} \big([\{\phi_t\}]\big)$.} of $[\{\phi_t\}_{t\in[0,1]}]$ by
$$
\nu_{\text{osc}} \big([\{\phi_t\}]\big) = \nu^+\big([\{\phi_t\}]\big) - \nu^-\big([\{\phi_t\}]\big).
$$
\end{defi}

Note that the oscillation pseudo-norm of $[\{\phi_t\}_{t\in[0,1]}]$ is always smaller to or equal than its zigzag norm, but in general it is not the same. For example if $\{\phi_t\}_{t\in[0,1]}$ is a positive contractible loop of contactomorphisms then $[\{\phi_t\}_{t\in[0,\frac{1}{2}]}]$ has oscillation pseudo-norm equal to zero, while its zigzag norm is at least $1$.

\begin{prop}\label{proposition: oscillation}
The oscillation pseudo-norm $\nu_{\text{osc}}$ is a conjugation-invariant pseudo-norm on $\widetilde{\text{Cont}_0}(M,\xi)$. It is non-degenerate if and only if $(M,\xi)$ is orderable.
\end{prop}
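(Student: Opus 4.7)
The plan is to verify the five axioms of a conjugation-invariant pseudo-norm for $\nu_{\text{osc}}$ directly from the definitions, and then to characterize its degeneracy in terms of the Eliashberg--Polterovich relation $\leq_{EP}$.

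First, I would check the pseudo-norm axioms. Positivity is immediate since $\nu^+\geq 0$ and $-\nu^-\geq 0$. For the triangle inequality, note that if $g$ admits a decomposition realizing $\nu^+(g)$ (some total number of pieces, $\nu^+(g)$ of them positive) and similarly for $h$, then concatenating the two decompositions yields a decomposition of $gh$ with at most $\nu^+(g)+\nu^+(h)$ positive pieces; the same concatenation trick using decompositions realizing $\nu^-(g)$ and $\nu^-(h)$ gives $-\nu^-(gh)\leq -\nu^-(g)-\nu^-(h)$, and adding the two inequalities yields $\nu_{\text{osc}}(gh)\leq \nu_{\text{osc}}(g)+\nu_{\text{osc}}(h)$. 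For symmetry, observe that if $\{\phi_t\}_{t\in[t_i,t_{i+1}]}$ is an embedded positive piece, then its time-reversal $\{\phi_{t_{i+1}+t_i-t}\}$ is embedded and negative, so inverting term-by-term a decomposition of $g$ turns positive pieces into negative ones and vice versa, giving $\nu^{\pm}(g^{-1})=-\nu^{\mp}(g)$ and hence $\nu_{\text{osc}}(g^{-1})=\nu_{\text{osc}}(g)$. Conjugation invariance follows from the fact, essentially observed in Section~\ref{section: discriminant metric}, that conjugation by a contactomorphism $f$ sends embedded isotopies to embedded isotopies; moreover it preserves the sign of the generating contact Hamiltonian (since the Hamiltonian of $f\phi_t f^{-1}$ is $e^{-k\circ f^{-1}}H_t\circ f^{-1}$ where $f^{\ast}\alpha=e^k\alpha$), so $f\widetilde{\mathcal{E}}_{\pm}f^{-1}\subseteq \widetilde{\mathcal{E}}_{\pm}$ with signs preserved.

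Next, I would prove the key equivalence $\nu_{\text{osc}}(g)=0 \Longleftrightarrow g\geq_{EP}\mathrm{id}$ and $g\leq_{EP}\mathrm{id}$. Indeed, $\nu^-(g)=0$ means $g$ admits a decomposition using only positive pieces; concatenating and smoothing these yields a genuine positive contact isotopy representing $g$, so $g\geq_{EP}\mathrm{id}$. Conversely, given any positive representative of $g$, Lemma~\ref{lemma: decomposition} (applied only to positive pieces, as explained at the beginning of Section~\ref{section: oscillation}) furnishes such a decomposition, so the converse also holds. Symmetrically, $\nu^+(g)=0 \Leftrightarrow g\leq_{EP}\mathrm{id}$. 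Hence $\nu_{\text{osc}}(g)=0$ is equivalent to $g$ lying simultaneously above and below $\mathrm{id}$ for the relation $\leq_{EP}$.

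Finally I would deduce the characterization. If $(M,\xi)$ is orderable, $\leq_{EP}$ is antisymmetric on $\widetilde{\text{Cont}_0}(M,\xi)$, so $g\leq_{EP}\mathrm{id}$ and $g\geq_{EP}\mathrm{id}$ force $g=\mathrm{id}$; this yields non-degeneracy. Conversely, if $(M,\xi)$ is not orderable, by the standard equivalence (\cite{EP}) there exists a positive contractible loop $\{\phi_t\}_{t\in[0,1]}$ based at $\mathrm{id}$. Set $g=[\{\phi_t\}_{t\in[0,1/2]}]\in\widetilde{\text{Cont}_0}(M,\xi)$; then $g$ is represented by the positive path $\{\phi_t\}_{t\in[0,1/2]}$. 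The reversed tail $\{\phi_{1-t}\}_{t\in[0,1/2]}$ is a negative path from $\mathrm{id}$ to $\phi_{1/2}$, and it is homotopic rel endpoints to $\{\phi_t\}_{t\in[0,1/2]}$ because their concatenation with endpoints reversed is the contractible loop $\{\phi_t\}_{t\in[0,1]}$. Thus $g$ is represented by both a positive and a negative path, so $\nu^+(g)=\nu^-(g)=0$; one can ensure $g\neq\mathrm{id}$ by choosing the positive contractible loop to be non-trivial in the appropriate sense (e.g.\ with $\phi_{1/2}\neq\mathrm{id}$, which can always be arranged by reparametrizing the loop). This shows $\nu_{\text{osc}}$ is degenerate, completing the proof. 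The only subtle point, which I would treat carefully, is the smoothing/reparametrization step needed to pass between concatenations of embedded positive pieces and genuine positive contact isotopies, and conversely.
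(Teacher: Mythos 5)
Your proof is correct and follows essentially the same route as the paper: the paper simply lists the corresponding properties of $\nu^{\pm}$ (positivity, behaviour under inversion and products, conjugation invariance) as immediate, derives non-degeneracy from orderability via the Eliashberg--Polterovich criterion, and uses half of a positive contractible loop as the degenerate element in the non-orderable case, exactly as you do. The only loose point is your claimed converse ``$g\geq_{EP}\mathrm{id}\Rightarrow\nu^-(g)=0$'', since $\geq_{EP}$ only provides a \emph{non-negative} representative rather than a positive one, but this implication is not needed anywhere in your argument for the proposition.
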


Before proving the proposition we recall in some more details the notion of orderability. For any contact manifold $(M,\xi)$ we can consider the relation $\leq_{EP}$ on $\widetilde{\text{Cont}_0}(M,\xi)$ given by setting $$[\{\phi_t\}_{t\in[0,1]}] \leq_{EP} [\{\psi_t\}_{t\in[0,1]}]$$ if $[\{\psi_t\}_{t\in[0,1]}] \cdot [\{\phi_t\}_{t\in[0,1]}]^{-1}$ can be represented by a non-negative contact isotopy. This relation is always reflexive and transitive. If it also antisymmetric then it defines a partial order on $\widetilde{\text{Cont}_0}(M,\xi)$, and the contact manifold $(M,\xi)$ is said to be orderable. As was proved by Eliashberg, Kim and Polterovich \cite{EKP}, $S^{2n+1}$ is not orderable. On the other hand, $\mathbb{R}P^{2n+1}$, $\mathbb{R}^{2n+1}$ and $\mathbb{R}^{2n} \times S^1$ are orderable. This was proved respectively by Eliashberg and Polterovich \cite{EP} for $\mathbb{R}P^{2n+1}$, using Givental's non-linear Maslov index \cite{Giv}, and by Bhupal \cite{B} for $\mathbb{R}^{2n+1}$ and $\mathbb{R}^{2n} \times S^1$ (see also \cite{mio1}).

\begin{proof}[Proof of Proposition \ref{proposition: oscillation}]
The result is implied by the following properties of the numbers $\nu^+$ and $\nu^-$:
\begin{enumerate}
\renewcommand{\labelenumi}{(\roman{enumi})}
\item For any $[\{\phi_t\}_{t\in[0,1]}]$ we have $\nu^+\big([\{\phi_t\}]\big) \geq 0$ and $\nu^-\big([\{\phi_t\}]\big)\leq 0$.
\item $\nu^-(\{\phi_t\}) = - \,\nu^+(\{\phi_t^{-1}\})$.
\item $\nu^+(\{\psi_t\phi_t\}) \leq \nu^+(\{\phi_t\}) + \nu^+(\{\psi_1\phi_t\})$ and $\nu^-(\{\psi_t\phi_t\}) \geq \nu^-(\{\phi_t\}) + \nu^-(\{\psi_1\phi_t\})$.
\item $\nu^{\pm}(\{\phi_t\}) = \nu^{\pm}(\{\psi_t\phi_t\psi_t^{-1}\})$.
\item If $(M,\xi)$ is orderable then $\nu^+(\{\phi_t\})=\nu^-(\{\phi_t\})= 0$ if and only if $\{\phi_t\}$ is the identity.
\end{enumerate}
Note that (v) follows from the criterion proved by Eliashberg and Polterovich \cite{EP}: $(M,\xi)$ is orderable if and only if there is no positive contractible loop of contactomorphisms. All other properties are immediate.
\end{proof}

\begin{q}
Assume that $(M,\xi)$ is orderable. Then, are there examples of contact isotopies for which the oscillation norm is strictly smaller than the zigzag norm\footnote{See \cite[page 23]{P} for a similar open question in Hofer geometry.}? If yes, is the oscillation norm at least always equivalent to the discriminant and zigzag norms?
\end{q}

We now assume that $(M,\xi)$ is orderable, and we denote by $d_{\text{osc}}$ the non-degenerate bi-invariant metric on $\widetilde{\text{Cont}_0}(M,\xi)$ which is induced by the oscillation norm. As we will now see, $d_{\text{osc}}$ is compatible with the partial order $\leq_{EP}$ introduced by  Eliashberg and Polterovich, i.e.
$$\big(\,\widetilde{\text{Cont}_0}(M,\xi),\,d_{\text{osc}}\,,\leq_{EP}\big)$$
is a partially ordered metric space. Recall that a \emph{partially ordered metric space} is a metric space $(Z,d)$ endowed with a partial order $\leq$ such that for every $a$, $b$, $c$ in $Z$ with $a\leq b\leq c$ it holds $d(a,b)\leq d(a,c)$.

\begin{prop}
For every orderable contact manifold $(M,\xi)$, the bi-invariant metric $d_{\text{osc}}$ on $\widetilde{\text{Cont}_0}(M,\xi)$ is compatible with the partial order $\leq_{EP}$.
\end{prop}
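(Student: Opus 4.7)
The plan is to use bi-invariance of both the metric $d_{\text{osc}}$ and the partial order $\leq_{EP}$ to reduce the desired inequality $d_{\text{osc}}(a,b)\leq d_{\text{osc}}(a,c)$ to showing that
\[
\nu_{\text{osc}}(x)\leq\nu_{\text{osc}}(xz)
\]
whenever $x,z\in\widetilde{\text{Cont}_0}(M,\xi)$ are both non-negative. Applied to $x:=a^{-1}b$ and $z:=b^{-1}c$, both of which are non-negative by the hypothesis $a\leq_{EP}b\leq_{EP}c$ together with bi-invariance of $\leq_{EP}$, this reduction is immediate.

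The crux will be the following key lemma: \emph{every non-negative element $v\in\widetilde{\text{Cont}_0}(M,\xi)$ satisfies $\nu^-(v)=0$}, that is, $v$ admits a decomposition as a product of elements of $\widetilde{\mathcal{E}}_+$. The idea is to take a non-negative representative $\{v_t\}$ with Hamiltonian $H_t\geq 0$, discard the closed subintervals on which $H_t\equiv 0$ (these contribute the identity in $\widetilde{\text{Cont}_0}(M,\xi)$ and drop out of any product decomposition), and---after a small perturbation that removes any isolated zeroes of $H_t$ while keeping the endpoints and the class in the universal cover fixed---subdivide the resulting strictly positive sub-isotopies into positive embedded pieces via the argument in the proof of Lemma \ref{lemma: decomposition}.

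Granted this lemma, the rest of the argument is a short formal manipulation of the properties of $\nu^+$ and $\nu^-$ established in the proof of Proposition \ref{proposition: oscillation}. Applying the lemma to $z$ together with property (ii) gives $\nu^+(z^{-1})=-\nu^-(z)=0$, so the triangle inequality (property (iii)) applied to the factorization $x=(xz)\cdot z^{-1}$ yields
\[
\nu^+(x)\leq\nu^+(xz)+\nu^+(z^{-1})=\nu^+(xz).
\]
On the other hand both $x$ and $xz$ (being a product of two non-negatives) are non-negative, so the lemma also gives $\nu^-(x)=\nu^-(xz)=0$, and hence
\[
\nu_{\text{osc}}(x)=\nu^+(x)-\nu^-(x)=\nu^+(x)\leq\nu^+(xz)=\nu_{\text{osc}}(xz),
\]
which is the reduced inequality.

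The hard part will be the key lemma. The delicate point is that in an orderable contact manifold the Eliashberg--Polterovich criterion forbids strictly positive contractible loops, so the local perturbation of $H_t$ used to remove isolated zeroes must be carried out while preserving the homotopy class of the sub-isotopy and cannot exploit such a loop; handling this will require some care, but once the lemma is in place, everything else is purely formal and relies only on the inequalities (i)--(iii) already available from the proof of Proposition \ref{proposition: oscillation}.
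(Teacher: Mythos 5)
Your argument is, up to presentation, the same as the paper's: reduce by bi-invariance to the case $\text{id}\leq_{EP}x\leq_{EP}xz$, observe that non-negative elements have vanishing $\nu^-$ so that $\nu_{\text{osc}}=\nu^+$ on them, and conclude by subadditivity of $\nu^+$ together with the vanishing of $\nu^+$ on the non-positive connector. The paper runs exactly this computation (phrased as a contradiction, with the connector appearing as the negative isotopy $\{\phi_t\circ\psi_t^{\phantom{t}-1}\circ\psi_1\}$), and it too invokes the statement ``non-negative $\Rightarrow\nu^-=0$'' --- but only in a single unproved line. So you have correctly located where all the content sits; the formal part of your argument is fine.

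The gap is in your sketch of the key lemma, and it is a genuine one. The zeroes of a non-negative Hamiltonian $H_t\geq 0$ are not in general confined to time-subintervals where $H_t\equiv 0$ plus isolated zeroes: $H_t$ may vanish identically on a fixed open subset of $M$ for every $t$ (e.g.\ any non-negative Hamiltonian supported in a ball), and then no perturbation fixing the endpoints and the class in $\widetilde{\text{Cont}_0}(M,\xi)$ makes the path positive. The obstruction is not cosmetic: since contact Hamiltonians add (up to positive conformal factors) under composition, any product of elements of $\widetilde{\mathcal{E}}_+$ is represented by a \emph{globally positive} path, so for $v\neq\text{id}$ the condition $\nu^-(v)=0$ is \emph{equivalent} to $v$ admitting a positive representative --- a strictly stronger condition than being non-negative. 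Already for $S^1$ a non-negative $f\neq\text{id}$ with a fixed point is a counterexample; on $\mathbb{R}P^{2n-1}$ the class of the flow of a non-negative Hamiltonian supported in a displaceable ball is non-negative but cannot dominate any $\text{Reeb}_\delta$, $\delta>0$ (its homogenized non-linear Maslov index vanishes by the displaceability trick, while that of $\text{Reeb}_\delta$ is positive), hence it has no positive representative and $\nu^-\leq -1$. So the key lemma cannot be proved by local perturbation; it needs either additional hypotheses or a genuinely different route, and since the paper's own proof leans on the same assertion, this is a point worth flagging rather than deferring.
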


\begin{proof}
It is enough to show that if $\text{id} \leq [\{\phi_t\}] \leq [\{\psi_t\}]$ then $\nu_{\text{osc}} \big([\{\phi_t\}]\big) \leq \nu_{\text{osc}} \big([\{\psi_t\}]\big)$. Since $[\{\phi_t\}] \geq \text{id}$, $[\{\phi_t\}]$ can be represented by a non-negative contact isotopy and so $\nu^-\big([\{\phi_t\}]\big) = 0$ i.e. $\nu_{\text{osc}} \big([\{\phi_t\}]\big) = \nu^+\big([\{\phi_t\}]\big)$. Similarly, $\nu_{\text{osc}} \big([\{\psi_t\}]\big) = \nu^+\big([\{\psi_t\}]\big)$. If we suppose that $\nu_{\text{osc}} \big([\{\psi_t\}]\big) < \nu_{\text{osc}} \big([\{\phi_t\}]\big)$ then this means that $[\{\psi_t\}]$ can be represented by a contact isotopy such that the number of positive embedded pieces is less than $\nu^+\big([\{\phi_t\}]\big)$. But this gives a contradiction because $[\{\phi_t\}]$ is also represented by the concatenation of $\{\psi_t\}$ and the contact isotopy
$\{\phi_t \circ \psi_t^{\phantom{t}-1} \circ \psi_1\}$, which is negative since by hypothesis $[\{\phi_t\}] \leq [\{\psi_t\}]$.
\end{proof}

Note that the definition of the oscillation pseudo-norm is a special case of a general construction in the context of partially ordered groups and word metrics. Let $G$ be a group. A subset $C$ of $G$ is called a \textit{cone} if $1 \in C$ and $C^2 \subset C$. Every cone $C$ induces a relation $\leq_C$ on $G$ defined by
$$
a\leq_C b \text{ if }a^{-1}b \in C.
$$
This relation is always transitive and reflexive, and it is bi-invariant if $C$ is invariant by conjugation. If it is also anti-symmetric (i.e. $a\leq_C b$ and $b \leq_C a \Rightarrow a=b$) then it is a bi-invariant partial order on $G$. Assume now that $C$ has a generating set $S$ such that $S \cup S^{-1}$ is a generating set for $G$. We can then define the \textit{counting functions} $\nu_S^{\phantom{S}\pm}: G \rightarrow \mathbb{Z}_{\geq 0}$ by setting $\nu_S^{\phantom{S}+}(g)$ to be the minimal number of elements of $S$ in a representation of $g$ as a product of elements of $S$ and $S^{-1}$, and $\nu_S^{\phantom{S}-}(g)$ to be minus the minimal number of elements of $S^{-1}$ in such a representation. Then $\nu_S:=\nu_S^{\phantom{S}+} - \nu_S^{\phantom{S}-}$ is a pseudo-metric on $G$, bi-invariant if $S$ is invariant by conjugation. Moreover we have that $\nu_S$ is non-degenerate if and only if $\leq_C$ is a partial order, and in this case $(G,\nu_S, \leq_C)$ is a partially ordered metric space. The definition of the oscillation metric falls into this general construction, by considering the cone formed by elements of $\widetilde{\text{Cont}_0}(M,\xi)$ that can be represented by a non-negative contact isotopy, with generating set $\widetilde{\mathcal{E}}_+$.

\begin{rmk}
As mentioned above, other integer-valued bi-invariant metrics have been defined by the second author \cite{mio1} for $\mathbb{R}^{2n} \times S^1$, by Zapolsky \cite{Z} for $T^{\ast}B \times S^1$ (for $B$ closed) and by Fraser, Polterovich and Rosen \cite{FPR} and Albers-Merry \cite{AM} for some other classes of circle bundles. For these manifolds we expect that the oscillation metric should be equivalent to these metrics (see also Remark \ref{remark: equivalence with my metric} for the case of $\mathbb{R}^{2n} \times S^1$).
\end{rmk}

\begin{rmk}
Let $(M,\xi)$ be an orderable contact manifold. Given a domain $\mathcal{U}$ in $(M,\xi)$ we can define its \emph{capacity} by
$$c(\mathcal{U})=\text{sup}\,\{\,\nu^+([\{\phi_t\}])\:|\: \phi_t \text{ supported in }\mathcal{U}\,\}$$
and its \emph{displacement energy} by
$$ E(\mathcal{U}):=\text{inf}\;\{\; \nu_{\text{osc}}([\{\psi_t\}]) \;|\; \psi_1(\mathcal{U})\cap\mathcal{U}= \emptyset \;\}.$$
Since the $\nu^+$ and $\nu^-$ are invariant by conjugation, the capacity and displacement energy of a domain are contact invariants. In the case of $M = \mathbb{R}^{2n} \times S^1$ we think that these invariants should reduce to the capacity and displacement energy that were introduced in \cite{mio1} to prove the contact non-squeezing theorem. It would be interesting to study the capacity and displacement energy with respect to the oscillation metric for domains in more general contact manifolds.
\end{rmk}

\section{Boundedness for $\mathbb{R}^{2n+1}$ and $S^{2n+1}$}\label{section: boundedness}

We first show that the discriminant, zigzag and oscillation metrics are bounded for the universal cover of the identity component of the group of compactly supported contactomorphisms of  $\mathbb{R}^{2n+1}$, with its standard contact structure $\xi=\text{ker}\big(dz+\frac{1}{2}(xdy-ydx)\big)$. Since the discriminant and oscillation metrics are bounded above by the zigzag metric, it is enough to show that the zigzag metric is bounded. This is proved in the following proposition.

\begin{prop}\label{proposition: euclidean space}
The zigzag norm of the homotopy class of a compactly supported contact isotopy $\{\phi_t\}_{t\in[0,1]}$ of $\mathbb{R}^{2n+1}$ is always smaller than or equal to $2$.
\end{prop}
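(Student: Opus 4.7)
The plan is to write any class $[\{\phi_t\}] \in \widetilde{\text{Cont}_0^{\phantom{0}c}}(\mathbb{R}^{2n+1})$ as a product in $\widetilde{\mathcal{E}}_+\cdot\widetilde{\mathcal{E}}_-$, by running the decomposition recipe from the proof of Lemma~\ref{lemma: decomposition} with a carefully chosen auxiliary isotopy $\{\varphi_t\}$. Fix a representative $\{\phi_t\}_{t\in[0,1]}$ with common compact support $K$ and contact Hamiltonian $H_t$ relative to $\alpha=dz+\tfrac{1}{2}(xdy-ydx)$, and let $\psi_s(x,y,z)=(x,y,z+s)$ denote the Reeb flow.

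Pick a large parameter $N>0$ (to be fixed below) and let $K':=K\cup\bigcup_{s\in[0,N]}\psi_s(K)$, a compact tube. Choose a time-independent smooth compactly supported Hamiltonian $\tilde{H}\colon\mathbb{R}^{2n+1}\to[0,N]$ equal to $N$ on an open neighborhood of $K'$, and let $\{\varphi_t\}_{t\in[0,1]}$ be its flow. Then $\{\varphi_t\}$ is a compactly supported positive contact isotopy coinciding with $\psi_{tN}$ on $K'$; in particular $\varphi_t(\phi_t(q))=\phi_t(q)+(0,0,tN)$ and $\varphi_t^{-1}(\varphi_1(\phi_1(q)))=\phi_1(q)+(0,0,(1-t)N)$ for all $q\in K$ and $t\in[0,1]$. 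Exactly as in the proof of Lemma~\ref{lemma: decomposition}, $\{\phi_t\}$ is then homotopic rel endpoints to the concatenation of $\alpha_t:=\varphi_t\circ\phi_t$ and $\beta_t:=\varphi_t^{-1}\circ\varphi_1\circ\phi_1$.

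The core computation is to take $N$ larger than both $\|H_t\|_{L^\infty}$ and $\sup_{t,q}|\partial_t z(\phi_t(q))|$ (both finite by compact support) and to check separately that $[\{\alpha_t\}]\in\widetilde{\mathcal{E}}_+$ and $[\{\beta_t\}]\in\widetilde{\mathcal{E}}_-$. For the first piece, the composition formula for contact Hamiltonians together with the vanishing of the conformal factor of $\varphi_t$ on $K'$ give the Hamiltonian of $\{\alpha_t\}$ at $\alpha_t(q)$ as $\tilde{H}(\alpha_t(q))+H_t(\phi_t(q))$, which is at least $N-\|H_t\|_{L^\infty}>0$ on $K$; for embeddedness, a coincidence $\alpha_{t_0}(q)=\alpha_{t_1}(q)$ with $q\in K$ and $t_0<t_1$ forces $z(\phi_{t_0}(q))-z(\phi_{t_1}(q))=(t_1-t_0)N$, so by the Mean Value Theorem $\partial_\tau z(\phi_\tau(q))=-N$ for some $\tau\in(t_0,t_1)$, contradicting the choice of $N$. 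For the second piece, the formula $\beta_t(q)=\phi_1(q)+(0,0,(1-t)N)$ gives $\partial_t\beta_t=-N\partial_z$, so $\{\beta_t\}$ has constant Hamiltonian $-N<0$; embeddedness is immediate since $\beta_t(q)=\beta_{t'}(q)$ directly forces $t=t'$.

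The main technical subtlety is that $\tilde{H}$ is not identically $N$ outside $K'$, so positivity and embeddedness must be checked also on the cutoff annulus $\mathrm{supp}(\tilde{H})\setminus\{\tilde{H}=N\}$, where $\varphi_t$ is no longer pure Reeb translation. In this region $\phi_t$ is the identity, so the verifications reduce to those for $\{\varphi_t\}$ alone: positivity is clear since $\tilde{H}>0$ on the interior of its support, and embeddedness follows because an autonomous flow of a strictly positive Hamiltonian has no fixed points there and hence no discriminant points. The resulting factorization $[\{\phi_t\}]=[\{\alpha_t\}]\cdot[\{\beta_t\}]$ then proves that the zigzag norm of $[\{\phi_t\}]$ is at most $2$.
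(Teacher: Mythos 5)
Your strategy is the same as the paper's: factor $[\{\phi_t\}]$ through the concatenation of $\{\varphi_t\circ\phi_t\}$ and $\{\varphi_t^{\phantom{t}-1}\circ\varphi_1\circ\phi_1\}$, where $\{\varphi_t\}$ is a compactly supported positive isotopy that acts like a large Reeb translation on the region swept by $\{\phi_t\}$. Your verification over $K$ (the composition formula for the Hamiltonian, the Mean Value Theorem argument ruling out coincidences $\alpha_{t_0}(q)=\alpha_{t_1}(q)$, and the explicit formula for $\beta_t$) is correct and matches the spirit of the paper's argument.

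The gap is in the cutoff region. You dismiss embeddedness there by saying that an autonomous flow of a strictly positive Hamiltonian ``has no fixed points there and hence no discriminant points.'' But embeddedness of $\{\varphi_t\}_{t\in[0,1]}$ requires that $\varphi_{t_0}^{\phantom{t}-1}\circ\varphi_{t_1}=\varphi_{t_1-t_0}$ have no discriminant points for all $0\le t_0<t_1\le 1$, i.e.\ no \emph{fixed points of the time-$s$ maps with vanishing conformal factor} for $0<s\le 1$. Positivity of $\tilde H$ only guarantees that the vector field $X_{\tilde H}$ has no zeros on the interior of the support; it does not exclude periodic orbits of period $\le 1$ (with zero total conformal factor), which is exactly how an autonomous positive flow can fail to be embedded. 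Your $\tilde H$ is an essentially arbitrary compactly supported function taking values up to the large constant $N$ on the transition annulus, so its flow there is fast and uncontrolled, and nothing in your construction rules out such short periodic orbits. This is precisely the point the paper's proof is careful about: it takes the auxiliary Hamiltonian of the special form $f(z)\,\rho\big(\tfrac{x^2+y^2}{R}\big)$ with $\rho'\le 0$, for which the $\partial/\partial z$-component of the contact vector field equals $f(z)\big(\rho-\tfrac{x^2+y^2}{R}\rho'\big)\ge 0$ and is strictly positive wherever the field is nonzero; hence $z$ strictly increases along every nonconstant orbit, there are no returns at all, and embeddedness of $\{\varphi_t\}$ (and of the two composed pieces) follows. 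To close your argument you should replace the arbitrary cutoff $\tilde H$ by a Hamiltonian of this monotone radial-times-$z$-cutoff form (enlarging $R$ and the plateau value so that it still dominates $\{\phi_t\}$ on $K'$), or otherwise prove directly that your chosen $\tilde H$ generates an embedded isotopy on the transition region.
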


\begin{proof}
As in the proof of Lemma \ref{lemma: decomposition}, $\{\phi_t\}$ is in the same homotopy class as the concatenation of $\{\varphi_t\circ\phi_t\}$ and $\{\varphi_t^{\phantom{t}-1}\circ(\varphi_1\circ\phi_1)\}$ for any other contact isotopy $\{\varphi_t\}$. We will show that it is possible to choose $\{\varphi_t\}$ in such a way that $\{\varphi_t\circ\phi_t\}$ is positive, $\{\varphi_t^{\phantom{t}-1}\circ(\varphi_1\circ\phi_1)\}$ is negative and both are embedded. Note first that for a contact Hamiltonian $H: \mathbb{R}^{2n+1} \rightarrow \mathbb{R}$ of the form $H(x,y,z) = H(x,y)$ the corresponding contact vector field is given by 
$$
X_H = - \frac{\partial H}{\partial y}\,\frac{\partial}{\partial x} + \frac{\partial H}{\partial x}\,\frac{\partial}{\partial y} + \big(H(x,y) - \frac{1}{2}(x\frac{\partial H}{\partial x} + y\frac{\partial H}{\partial y})\big)\,\frac{\partial}{\partial z}\;.
$$
Consider now $H_{\rho,R}(x,y,z) = \rho(\frac{x^2+y^2}{R})$ for some positive real number $R$ and some function $\rho: [0,\infty) \rightarrow [0,\infty)$ which is supported in $[0,1]$ and has $\rho'<0$ and $\rho''>0$ (Hamiltonian functions of this form have also been used by Traynor \cite{Tr} in her calculations of symplectic homology of ellipsoids). The corresponding vector field is given by 
$$
X_{\rho,R} = - \frac{2y}{R}\rho'(\frac{x^2+y^2}{R})\,\frac{\partial}{\partial x} + \frac{2x}{R}\rho'(\frac{x^2+y^2}{R})\,\frac{\partial}{\partial y} + \big(\rho(\frac{x^2+y^2}{R}) - \frac{x^2+y^2}{R} \rho'(\frac{x^2+y^2}{R})\big)\,\frac{\partial}{\partial z}\;.
$$
thus the generated contact isotopy $\{\varphi_t^{\phantom{t}\rho,R}\}$ is embedded, since the coefficient of $\frac{\partial}{\partial z}$ is always strictly positive. We get thus that  $\{(\varphi_t^{\phantom{t}\rho,R})^{-1}\circ(\varphi_1\circ\phi_1)\}$ is embedded (and negative) for any choice of $\rho$ and $R$. Moreover if we choose $\rho$ and $R$ big enough then $\{\varphi_t^{\phantom{t}\rho,R}\circ\phi_t\}$ will also be embedded (and positive). Indeed, our initial contact isotopy $\{\phi_t\}$ is compactly supported and so in particular every point is moved in the $z$-direction only by a bounded quantity. Thus, by choosing $\rho$ and $R$ big enough we can make $\{\varphi_t^{\phantom{t}\rho,R}\}$ move every point of the support of $\{\phi_t\}$ far enough in the $z$-direction so that $\{\varphi_t^{\phantom{t}\rho,R}\circ\phi_t\}$ is embedded. This does not finish the proof yet, because $\{\varphi_t^{\phantom{t}\rho,R}\}$ is not a compactly supported contact isotopy (it is not compactly supported in the $z$-direction). In order to fix this we cut-off the Hamiltonian $H_{\rho,R}$ in the $z$-direction, far away from the support of $\{\phi_t\}$. More precisely, we consider the contact Hamiltonian $H_{\rho,R,f}(x,y,z) = f(z)\big(\rho(\frac{x^2+y^2}{R})\big)$ for some positive cut-off function $f$. Then 
\begin{eqnarray*}
X_{\rho,R,f} &=& \Big(-\frac{2y}{R}\rho'(\frac{x^2+y^2}{R}) + \frac{x}{2}f'(z)\rho(\frac{x^2+y^2}{R})\Big)\,\frac{\partial}{\partial x}\\
&+& \Big(\frac{2x}{R}\rho'(\frac{x^2+y^2}{R}) + \frac{y}{2}f'(z)\rho(\frac{x^2+y^2}{R})\big)\Big)\,\frac{\partial}{\partial y}\\
&+& f(z)\Big(\big(\rho(\frac{x^2+y^2}{R}) - \frac{x^2+y^2}{R} \rho'(\frac{x^2+y^2}{R})\big)\Big)\,\frac{\partial}{\partial z}\;.
\end{eqnarray*}
By the same arguments as before, if we choose $\rho$ and $R$ big enough then for the generated contact isotopy $\{\varphi_t\} := \{\varphi_t^{\phantom{t}\rho,R,f}\}$ we have that $\{\varphi_t\circ\phi_t\}$ is positive, $\{\varphi_t^{\phantom{t}-1}\circ(\varphi_1\circ\phi_1)\}$ is negative and both are embedded.
\end{proof}

As a consequence of Proposition \ref{proposition: euclidean space} we see that, on any contact manifold, the discriminant, zigzag and oscillation  norms of the homotopy class of a contact isotopy that is supported in a Darboux ball are bounded.

\begin{lemma}\label{lemma: ball}
Let $(M,\xi)$ be a contact manifold and  $B\subset M$ a Darboux ball. Then the zigzag norm of the homotopy class of a contact isotopy $\{\phi_t\}_{t\in [0,1]}$ supported in $B$ is at most $2$.
\end{lemma}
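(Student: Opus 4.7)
The plan is to port the argument of Proposition \ref{proposition: euclidean space} to a Darboux chart on $B$. Fix a contactomorphism $\psi : B \to B'$ from $B$ onto an open subset $B'$ of $\mathbb{R}^{2n+1}$ equipped with its standard contact structure. Since $\{\phi_t\}$ is supported in a compact subset of $B$, the pushforward $\tilde\phi_t := \psi \circ \phi_t \circ \psi^{-1}$ is a contact isotopy compactly supported in $B'$.

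Inside $B'$ we aim to reproduce the construction from the proof of Proposition \ref{proposition: euclidean space}: an auxiliary positive contact isotopy $\{\tilde\varphi_t\}$ generated by a Hamiltonian of the form $f(z)\rho(\frac{x^2+y^2}{R})$, chosen so that $\{\tilde\varphi_t \circ \tilde\phi_t\}$ is embedded and positive while $\{\tilde\varphi_t^{\,-1} \circ (\tilde\varphi_1 \circ \tilde\phi_1)\}$ is embedded and negative. Pulling these two pieces back via $\psi^{-1}$ and extending by the identity on $M \setminus B$ then yields a decomposition of $[\{\phi_t\}]$ as a product of two embedded isotopies of constant sign in $\widetilde{\text{Cont}_0}(M,\xi)$, proving the bound $\leq 2$ on the zigzag norm.

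The main technical point is to arrange that $\{\tilde\varphi_t\}$ has support inside $B'$, not just inside $\mathbb{R}^{2n+1}$, so that the resulting isotopies remain supported in $B$ after pulling back. This is handled by choosing the Darboux chart appropriately: the contact dilations $\phi_\lambda(x,y,z) = (\lambda x, \lambda y, \lambda^2 z)$ are contactomorphisms of standard $\mathbb{R}^{2n+1}$, so replacing $\psi$ by $\phi_\lambda \circ \psi$ for suitable $\lambda$ gives a new Darboux chart whose image $\phi_\lambda(B')$ can be made to contain the compact cylindrical support of the auxiliary Hamiltonian needed to handle the correspondingly rescaled isotopy. With such a chart, the construction of Proposition \ref{proposition: euclidean space} goes through inside $B'$, and everything transports back to $M$ as required.
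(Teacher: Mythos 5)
There is a genuine gap, and it is the one point that the transfer from $\mathbb{R}^{2n+1}$ to a ball inside a (possibly compact) manifold $M$ actually hinges on. Your two pieces are obtained by pulling back isotopies supported in $B'$ and \emph{extending by the identity on $M\setminus B$}. But an isotopy of $M$ that is constant (equal to the identity, or to any fixed contactomorphism) on the open set $M\setminus B$ is never embedded when $M\neq B$: for every $q\notin B$ and all $t_0\neq t_1$ the point $(q,q,0)$ lies in $\text{gr}$ of both time slices, so $\bigcup_t\text{gr}(\cdot)$ is not embedded; nor is such a piece positive or negative, since points of $M\setminus B$ do not move transversely to $\xi$. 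Hence neither piece belongs to $\widetilde{\mathcal{E}}_{\pm}$ and no bound on the zigzag norm follows. (In Proposition \ref{proposition: euclidean space} this problem is invisible only because of the convention, stated in Section \ref{section: discriminant metric}, that for compactly supported isotopies of a non-compact manifold one works over the interior of the support; that convention is not available once the pieces are viewed as isotopies of all of $M$.) The paper's proof repairs exactly this: the auxiliary isotopy $\{\varphi_t\}$ is taken to be the composition of the transported flow with the time-$\lambda' t$ flow of a global Reeb vector field $R$ of $M$, with $\lambda'>0$ small. This $\{\varphi_t\}$ is \emph{not} supported in $B$; it is strictly positive everywhere on $M$ and embedded, so that $\{\varphi_t\circ\phi_t\}$ and $\{\varphi_t^{\phantom{t}-1}\}$ are genuinely positive/negative embedded isotopies of $M$. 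Your argument needs this global ingredient.

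A secondary problem is your fix for fitting the auxiliary Hamiltonian's support inside $B'$. The contact dilations $\phi_\lambda$ rescale the support $K$ of $\tilde\phi_t$, its $z$-displacement, and the solid cylinder on which the auxiliary Hamiltonian must be switched on, all by the same factor, so whether that cylinder fits inside $B'$ is a scale-invariant question and conjugating by $\phi_\lambda$ does not change the answer (e.g.\ if $K$ nearly fills a round $B'$, the cylinder over $K$ of the required height never fits). The paper sidesteps this entirely by invoking the fact, due to Chekanov, van Koert and Schlenk \cite{CKS}, that any Darboux ball is contactomorphic to the \emph{whole} of standard $\mathbb{R}^{2n+1}$; with that identification the full construction of Proposition \ref{proposition: euclidean space} transports into $B$ with no room constraint, and only the global Reeb correction described above remains to be added.
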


\begin{proof}
This follows from Proposition \ref{proposition: euclidean space} and the fact that any Darboux ball is contactomorphic to the whole $\mathbb{R}^{2n+1}$ (see \cite{CKS}). In the notation of the proof of Proposition \ref{proposition: euclidean space}, here instead of simply considering the flow of $\lambda f \frac{\partial}{\partial z}$ we take the contact isotopy $\{\varphi_t\}$ to be the composition of the flow of $\lambda f \frac{\partial}{\partial z}$ together with the flow of $\lambda' R$, where $R$ is a fixed Reeb vector field on $M$ and $\lambda' >0$. If $\lambda'$ is small enough then $\{\varphi_t\}$ is embedded and positive. In that case, both $\{\varphi_t^{\phantom{t}-1}\}$ and $\{\varphi_t \circ \phi_t\}$ are embedded and we conclude as in Proposition \ref{proposition: euclidean space}.
\end{proof}

We will now use Lemma \ref{lemma: ball} to prove that the discriminant, zigzag and oscillation norms on $S^{2n+1}$ are bounded. Recall that the standard contact structure on $S^{2n+1}$ is defined to be the kernel of the restriction to $S^{2n+1}$ of the 1-form $\alpha=xdy-ydx$, where we regard $S^{2n+1}$ as the unit sphere in $\mathbb{R}^{2n+2}$ and where $(x,y)$ are the coordinates on $\mathbb{R}^{2n+2}$.

\begin{prop}\label{proposition: sphere}
The zigzag norm of the homotopy class of a contact isotopy $\{\phi_t\}_{t\in[0,1]}$ of $S^{2n+1}$ is always smaller than or equal to $4$.
\end{prop}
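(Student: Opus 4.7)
The plan is to reduce to Lemma \ref{lemma: ball} by decomposing $[\{\phi_t\}] \in \widetilde{\text{Cont}_0}(S^{2n+1})$ as a product of two elements, each representable by a contact isotopy supported in a Darboux ball. Since $S^{2n+1}$ is covered by the two Darboux balls $B_i := S^{2n+1}\setminus\{p_i\}$ obtained by removing any two distinct points $p_1, p_2$, applying Lemma \ref{lemma: ball} to each factor would yield zigzag norm at most $2+2=4$.

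To carry out this decomposition, I would use the trick from the proof of Lemma \ref{lemma: decomposition}, writing
$$\{\phi_t\} \sim \{\varphi_t\circ\phi_t\}\cdot\{\varphi_t^{-1}\circ\varphi_1\circ\phi_1\}$$
for a contact isotopy $\{\varphi_t\}$ chosen so that the first piece is supported in $B_{p_1}$ and the second in $B_{p_2}$. The first support condition forces $\varphi_t$ to coincide with $\phi_t^{-1}$ in a neighborhood of $p_1$, which can be arranged by cutting off the contact Hamiltonian of $\phi_t^{-1}$ to a small Darboux chart around $p_1$, combined with a slow Reeb flow (of total duration less than the period $1$) to guarantee embeddedness, just as in the proof of Lemma \ref{lemma: ball}. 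The second support condition requires $\varphi_t(p_2)=p_2$ for all $t$ together with $\phi_1$ being the identity near $p_2$. The latter is not automatic, but can be achieved by a preliminary modification exploiting the non-orderability of $S^{2n+1}$ (Eliashberg--Kim--Polterovich): one can precompose $\{\phi_t\}$ with a positive contractible loop, which does not change its class in $\widetilde{\text{Cont}_0}(S^{2n+1})$, in order to place $p_2$ outside the support of (a homotopic representative of) $\phi_1$.

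Once the fragmentation into two Darboux-ball-supported pieces $\{\phi_t^{(1)}\}\cdot\{\phi_t^{(2)}\}$ is in place, Lemma \ref{lemma: ball} bounds the zigzag norm of each factor by $2$, and the triangle inequality for the word metric gives $\nu_{\text{zig}}([\{\phi_t\}]) \leq \nu_{\text{zig}}([\{\phi_t^{(1)}\}]) + \nu_{\text{zig}}([\{\phi_t^{(2)}\}]) \leq 2+2=4$, as required. Bounds on the discriminant and oscillation norms then follow from the general inequality $\nu_{\text{disc}}, \nu_{\text{osc}} \leq \nu_{\text{zig}}$ proved in Section \ref{section: oscillation}.

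The main obstacle is realizing both support conditions simultaneously while preserving the homotopy class of $\{\phi_t\}$ in $\widetilde{\text{Cont}_0}(S^{2n+1})$. Unlike the argument for $\mathbb{R}^{2n+1}$ in Proposition \ref{proposition: euclidean space}, where one exploits the noncompactness by pushing arbitrarily far in the $z$-direction, the $1$-periodicity of the Reeb flow on $S^{2n+1}$ forces the positive component of $\{\varphi_t\}$ to have total Reeb displacement strictly less than $1$ in order to remain embedded; this tightly constrains the cut-off construction near $p_1$. Moreover, the fixed-point requirement at $p_2$ couples the choice of $\{\varphi_t\}$ with the behavior of the endpoint $\phi_1$. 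Reconciling these compatibility conditions, using the flexibility afforded by non-orderability of $S^{2n+1}$ together with Lemma \ref{lemma: ball} applied in Darboux charts, is the central technical point.
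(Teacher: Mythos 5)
Your overall strategy --- fragment $[\{\phi_t\}]$ into two factors, each represented by a contact isotopy supported in a Darboux ball, and apply Lemma \ref{lemma: ball} to each --- is exactly the paper's. But the concrete decomposition you propose does not achieve this, and the step you invoke to repair it is wrong. Precomposing $\{\phi_t\}$ with a contractible loop $\{\ell_t\}$ (with $\ell_0=\ell_1=\mathrm{id}$) does not change the time-one map $\phi_1$, so it cannot ``place $p_2$ outside the support of $\phi_1$''; non-orderability of $S^{2n+1}$ plays no role here. For a generic $\phi_1$ there is no open set on which it equals the identity, so your second support condition --- which at $t=1$ forces $\phi_1=\mathrm{id}$ near $p_2$ --- simply cannot be met. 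The decomposition $\{\varphi_t\circ\phi_t\}\cdot\{\varphi_t^{\phantom{t}-1}\circ\varphi_1\circ\phi_1\}$ is tailored to making the two pieces positive and negative; it is the wrong tool for controlling supports.

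The correct fragmentation cuts off the Hamiltonian of $\{\phi_t\}$ itself rather than that of an auxiliary isotopy. Fix a single point $q$ and note that the arc $\gamma=\bigcup_{t\in[0,1]}\phi_t(q)$ lies in some Darboux ball $B$, chosen so that a slightly larger Darboux ball $B'\supset B$ exists. Let $\{v_t\}$ be generated by $\chi H_t$, where $H_t$ generates $\{\phi_t\}$ and $\chi$ is a cut-off equal to $1$ on $B$ and $0$ outside $B'$. Then $\{v_t\}$ is supported in $B'$, and for a small enough neighborhood $N(q)$ with $\phi_t\big(N(q)\big)\subset B$ for all $t$ one has $v_t=\phi_t$ on $N(q)$; hence $u_t:=v_t^{\phantom{t}-1}\circ\phi_t$ is the identity on $N(q)$ and is therefore supported in the Darboux ball $S^{2n+1}\setminus\overline{N(q)}$. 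Since $\{\phi_t=v_t\circ u_t\}$ is homotopic to the concatenation of $\{u_t\}$ and $\{v_t\circ u_1\}$, and right multiplication by the fixed element $u_1$ does not change the zigzag length, Lemma \ref{lemma: ball} gives the bound $2+2=4$. The point is that only the trajectory of one point needs to be confined to a ball; you never need $\phi_1$ to be the identity anywhere.
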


\begin{proof}
Consider a point $q$ of $S^{2n+1}$, and the arc $\gamma =\cup_{t\in [0,1]} \phi_t (q)$. Take a Darboux ball $B \subset S^{2n+1}$ that contains $\gamma$ (recall that for any point $q'$ of $S^{2n+1}$ the complement $S^{2n+1}\setminus q'$ is contactomorphic to $\mathbb{R}^{2n+1}$, see for example \cite{gei}). Consider the contact isotopy $\{v_t\}_{t\in[0,1]}$ generated by the contact Hamiltonian $\chi H_t$ where $H_t$ is the Hamiltonian of $\{\phi_t\}$ and $\chi$ a cut-off function that is $1$ on $B$ and $0$ outside a slightly bigger Darboux ball $B'\supset B$ (in order for $B'$ to exist we choose the initial $B$ such that the complement is not a single point). If $N(q)$ is a sufficiently small neighborhood of $q$ then $\phi_t\big(N(q)\big)\subset B$ for all $t\in[0,1]$. Hence $v_t \vert_{N(q)} =\phi_t \vert_{N(q)}$ and so $v_t^{\phantom{t}-1} \circ \phi_t$ is the identity on $N(q)$. Note that $B''=S^{2n+1}\setminus\overline{N(q)}$ is also a Darboux ball, and that $u_t:=v_t^{\phantom{t}-1} \circ \phi_t$ is supported in $B''$. Since $\{\phi_t=v_t\circ u_t\}$ is homotopic to the concatenation of $\{u_t\}$ and $\{v_t\circ u_1\}$ and since the zigzag length of $\{v_t\circ u_1\}$ is equal to the zigzag length of $\{v_t\}$ we see thus, by Lemma \ref{lemma: ball}, that the zigzag norm of $\{\phi_t\}$ is at most $4$.
\end{proof}

\section{Relation with the fragmentation norm}\label{section: fragmentation}

Let $M$ be a compact smooth manifold. The fragmentation lemma (see \cite{Ban}) says that every diffeomorphism $\phi$ of $M$ which is isotopic to the identity can be written as a finite product of diffeomorphisms supported in embedded open balls. The \emph{fragmentation norm} of $\phi$ is then defined to be the minimal number of factors in such a decomposition. Burago, Ivanov and Polterovich \cite{BIP} proved that if the fragmentation norm on $M$ is bounded then every conjugation-invariant norm on $\text{Diff}_0(M)$ is equivalent to the trivial one. Moreover they showed that this is the case for $M=S^n$ and for $M$ a compact connected $3$-dimensional manifold. Tsuboi \cite{Ts,Ts2} extended this result respectively to all compact connected odd-dimensional manifolds and to the even dimensional case. The fragmentation norm can be defined also in the context of symplectic topology. Let $(M,\omega)$ be a compact symplectic manifold, and $\mathcal{U}$ an open domain in $M$. The Hamiltonian fragmentation lemma \cite{Ban} says that every Hamiltonian symplectomorphism $\phi$ of $(M,\omega)$ can be written as a finite product $\phi=\phi_1\cdots\phi_N$ where each $\phi_i$ is conjugate to a Hamiltonian symplectomorphism generated by a Hamiltonian function supported in $\mathcal{U}$. The \emph{Hamiltonian fragmentation norm} of $\phi$ with respect to $\mathcal{U}$ is defined to be the minimal number of factors in such a decomposition. Using the work of Entov and Polterovich \cite{EnP} it was proved in \cite{BIP} that the Hamiltonian fragmentation norm in $\text{Ham}(\mathbb{T}^{2n})$ with respect to a displaceable domain $\mathcal{U}$ is unbounded. See also Monzner, Vichery and Zapolsky \cite{MVZ} for similar results in the cotangent bundle.

In the contact case the fragmentation lemma holds again in the more general form, as was proved by Rybicky.

\begin{lemma}[Lemma 5.2 in \cite{Ry}]
Let $(M,\xi)$ be a compact\footnote{The result is also true for compactly supported contactomorphisms of non-compact contact manifolds.} contact manifold and $\{\mathcal{U}_i\}_{i=1}^k$ an open cover. Then, every contactomorphism $\phi$ of $M$ isotopic to the identity can be written
as a finite product $\phi=\phi_1\cdots\phi_N$ where each $\phi_j$ is supported in some $\mathcal{U}_{i(j)}$. The same is true for isotopies of contactomorphisms instead of contactomorphisms.
\end{lemma}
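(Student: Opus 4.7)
The plan is to prove the lemma by the classical Banyaga-style fragmentation strategy, adapted to the contact setting, in two main steps: a time-subdivision reduction and an inductive partition-of-unity decomposition.

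\textit{Reduction to $\mathcal{C}^1$-small contactomorphisms.} Given an isotopy $\{\phi_t\}_{t \in [0,1]}$ from the identity to $\phi$, I would first use compactness of $M \times [0,1]$ together with a Lebesgue-number argument to produce a subdivision $0 = t_0 < t_1 < \cdots < t_N = 1$ and a finite open refinement $\{\mathcal{V}_j\}_{j=1}^m$ of the cover $\{\mathcal{U}_i\}$ such that, for every pair $(i,j)$, the tube $\bigcup_{t \in [t_i, t_{i+1}]} (\phi_t \circ \phi_{t_i}^{-1})(\overline{\mathcal{V}_j})$ is contained in some $\mathcal{U}_{\ell(i,j)}$. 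Writing $\phi = (\phi_1 \circ \phi_{t_{N-1}}^{-1}) \circ \cdots \circ (\phi_{t_1} \circ \phi_{t_0}^{-1})$ reduces the statement to the $\mathcal{C}^1$-small case of fragmenting $\psi_1$ for one of the short sub-isotopies $\{\psi_s\}_{s \in [0,1]}$, which, by the argument in the proof of Lemma \ref{lemma: decomposition}, lies in a Weinstein neighborhood of the identity.

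\textit{Fragmentation of $\psi_1$.} Pick a partition of unity $\{\rho_j\}_{j=1}^m$ subordinate to $\{\mathcal{V}_j\}$, and let $H_s$ denote the contact Hamiltonian of $\{\psi_s\}$. I would inductively build contact isotopies $\{\eta^j_s\}_{s\in[0,1]}$ such that the partial composition $\alpha^j_s := \eta^1_s \circ \cdots \circ \eta^j_s$ is generated by the Hamiltonian $(\rho_1 + \cdots + \rho_j) H_s$. At step $j$, the Hamiltonian of $\eta^j_s$ is obtained from $\rho_j H_s$ by conjugating with $(\alpha^{j-1}_s)^{-1}$ and multiplying by the appropriate conformal factor, according to the composition formula for contact Hamiltonians. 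After $m$ steps we have $\alpha^m_s = \psi_s$, and the support of each $\eta^j_s$ lies in $(\alpha^{j-1}_s)^{-1}(\mathrm{supp}(\rho_j)) \subset (\alpha^{j-1}_s)^{-1}(\overline{\mathcal{V}_j})$, which by the refinement chosen in the first step is contained in some element $\mathcal{U}_{\ell(i,j)}$ of the original cover.

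\textit{Isotopy version and main obstacle.} The parametric version of the statement for isotopies rather than single contactomorphisms follows by performing the same construction smoothly in the outer time parameter. The main obstacle is a bookkeeping one at the inductive step: in the contact case the Hamiltonian of a composition carries the conformal factor $e^g$ coming from $\phi^*\alpha = e^g \alpha$, so the twisted Hamiltonian defining $\eta^j_s$ is slightly more elaborate than in the symplectic case, and one has to verify that the supports are still controlled under this twisting. Once the support containment is guaranteed by the Step 1 refinement, the remaining identity $\alpha^m_s = \psi_s$ is a direct computation with the chain rule for contact vector fields.
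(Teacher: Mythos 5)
The paper does not prove this lemma; it is quoted verbatim from Rybicki \cite{Ry} (Lemma 5.2 there), so there is no in-paper proof to compare against. Your argument is the standard Banyaga-style fragmentation, which is essentially how the result is proved in the literature, and the overall structure is sound: the time-subdivision reduces to a $\mathcal{C}^1$-small contactomorphism generated by a small Hamiltonian $H_s$, and the telescoping $\alpha^j_s=\eta^1_s\circ\cdots\circ\eta^j_s$ with $\alpha^j_s$ generated by $(\rho_1+\cdots+\rho_j)H_s$ does produce factors $\eta^j_s$ whose Hamiltonian is $e^{-g^{j-1}_s}\,(\rho_j H_s)\circ\alpha^{j-1}_s$, hence supported in $(\alpha^{j-1}_s)^{-1}(\mathrm{supp}\,\rho_j)$; the conformal factor $e^{-g^{j-1}_s}$ is harmless for support control.

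The one place where the write-up does not quite close is the support containment you invoke at the end of Step 2. Your Step 1 refinement controls the tubes $\bigcup_t(\phi_t\circ\phi_{t_i}^{-1})(\overline{\mathcal{V}_j})$, but the sets you actually need to trap inside a single $\mathcal{U}_\ell$ are $\bigcup_s(\alpha^{j-1}_s)^{-1}(\overline{\mathcal{V}_j})$, and the maps $\alpha^{j-1}_s$ are the flows of the \emph{cut-off} Hamiltonians $(\rho_1+\cdots+\rho_{j-1})H_s$, which are not among the maps controlled in Step 1. The fix is standard but should be stated: fix the partition of unity $\{\rho_j\}$ and a Lebesgue-type constant $\epsilon>0$ such that the $\epsilon$-neighborhood of each $\overline{\mathcal{V}_j}$ lies in some $\mathcal{U}_\ell$, note that the displacement of the flow of any Hamiltonian of the form $\chi H_s$ with $0\le\chi\le1$ and $\|\chi\|_{C^1}$ bounded by the fixed constants of the partition is controlled by $\|H_s\|_{C^1}$, and then choose the time-subdivision fine enough that this displacement is less than $\epsilon$. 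With that adjustment (and carrying the outer time parameter through smoothly for the isotopy version, as you indicate), the proof is complete.
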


Using this lemma, we can define the \emph{contact fragmentation norm} of a contactomorphism $\phi$ to be the minimal number of factors in a decomposition of $\phi$ as a finite product of contactomorphisms supported in a Darboux ball. The same definition can be given also for the contact fragmentation norm in the universal cover of the identity component of the contactomorphism group of $M$. As far as we know there are no results in the literature about boundedness or unboundedness of the contact fragmentation norm.

As we will now see, for any contact manifold $(M,\xi)$ the contact fragmentation norm in $\widetilde{\text{Cont}_0}(M,\xi)$ is bounded below by half of the zigzag norm. Hence, if the zigzag norm is unbounded then so is the contact fragmentation norm.

\begin{prop}\label{proposition: fragmentation}
For any contact manifold $(M,\xi)$ and any $[\{\phi_t\}]\in\widetilde{\text{Cont}_0}(M,\xi)$ we have that the zigzag norm of $[\{\phi_t\}]$ is smaller than or equal to twice its fragmentation norm.
\end{prop}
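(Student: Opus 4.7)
The plan is to derive this bound directly from three ingredients already assembled in the paper: Rybicky's fragmentation lemma for contact \emph{isotopies}, the subadditivity of the zigzag norm (which follows from its being a word norm and hence automatically satisfies the triangle inequality), and the local bound of Lemma \ref{lemma: ball} which says that any isotopy supported in a Darboux ball has zigzag norm at most $2$.

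First I would fix a representative $\{\phi_t\}_{t\in[0,1]}$ of the class $[\{\phi_t\}]$ and let $N$ denote the fragmentation norm of $[\{\phi_t\}]$ in $\widetilde{\text{Cont}_0}(M,\xi)$. By definition, there exist classes $[\{\phi_t^1\}],\dots,[\{\phi_t^N\}]$ in $\widetilde{\text{Cont}_0}(M,\xi)$ such that
$$
[\{\phi_t\}] \;=\; [\{\phi_t^1\}]\cdot [\{\phi_t^2\}]\cdots [\{\phi_t^N\}],
$$
and such that each factor $[\{\phi_t^j\}]$ admits a representative isotopy that is supported in a Darboux ball $B_j\subset M$. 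The existence of such a decomposition at the level of the universal cover follows from Rybicky's fragmentation lemma applied to isotopies (the ``same is true for isotopies'' statement quoted in the excerpt), using an open cover of $M$ by Darboux balls.

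Next I would invoke Lemma \ref{lemma: ball}: since each $[\{\phi_t^j\}]$ is represented by a contact isotopy supported in the Darboux ball $B_j$, its zigzag norm is at most $2$. Finally, because the zigzag norm is the word norm associated to the conjugation-invariant generating set $\widetilde{\mathcal{E}}_{\pm}$ (and hence, as recalled in Section \ref{section: discriminant metric}, satisfies the triangle inequality), we obtain
$$
\nu_{\mathrm{zig}}\bigl([\{\phi_t\}]\bigr) \;\le\; \sum_{j=1}^N \nu_{\mathrm{zig}}\bigl([\{\phi_t^j\}]\bigr) \;\le\; 2N,
$$
which is exactly the claimed inequality.

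There is no serious obstacle here; the only subtlety worth checking is that Rybicky's fragmentation lemma indeed produces a decomposition at the level of the universal cover (so that the equality above holds as classes of isotopies, not only as contactomorphisms). This is handled by the ``isotopies'' version of the lemma already cited, which allows us to lift the decomposition of $\phi_1$ to a decomposition of the whole path $\{\phi_t\}$, up to concatenation, keeping each factor supported in a Darboux ball. Once this point is granted, the two-line estimate above gives the proposition.
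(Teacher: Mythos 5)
Your argument is correct and is exactly the argument the paper intends: the paper's proof consists of the single line ``This follows from Lemma \ref{lemma: ball}'', and your write-up simply fills in the implicit steps (fragmentation of the class into factors supported in Darboux balls, the bound of $2$ per factor from Lemma \ref{lemma: ball}, and the triangle inequality for the word norm). No discrepancy with the paper's approach.
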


\begin{proof}
This follows from Lemma \ref{lemma: ball}.
\end{proof}

In the next two sections we will prove that the discriminant, zigzag and oscillation norms are unbounded for $M=\mathbb{R}^{2n}\times S^1$ and $M=\mathbb{R}P^{2n-1}$. Hence, by Proposition \ref{proposition: fragmentation}, the contact fragmentation norm in the universal cover of the identity component of the contactomorphism group is also unbounded in these two cases.

\section{Unboundedness for $\mathbb{R}^{2n}\times S^1$} \label{section: RxS}

We consider the standard contact structure $\xi = \text{ker} (dz-ydx)$ on $\mathbb{R}^{2n}\times S^1$. We will show in this section that the discriminant and oscillation norms on the universal cover $\widetilde{\text{Cont}^{\phantom{0}c}_0}(\mathbb{R}^{2n}\times S^1)$ of the identity component of the group of compactly supported contactomorphisms of $\mathbb{R}^{2n}\times S^1$ are unbounded. As a consequence, the zigzag norm is also unbounded.

To prove the result we will use the spectral numbers $c^{\pm}(\phi)$ associated to a compactly supported contactomorphism $\phi$ of $\mathbb{R}^{2n}\times S^1$ which is isotopic to the identity. These spectral numbers were introduced by Bhupal \cite{B} in the case of $\mathbb{R}^{2n+1}$ and by the second author \cite{mio1} for $\mathbb{R}^{2n}\times S^1$. Recall that to every $\phi$ in $\text{Cont}^{\phantom{0}c}_0(\mathbb{R}^{2n}\times S^1)$ we can associate a Legendrian submanifold $\Gamma_{\phi}$ of $J^1(S^{2n}\times S^1)$, which is defined as follows. First we will see $\phi$ as a 1-periodic contactomorphism of $\mathbb{R}^{2n+1}$, and associate to it the Legendrian submanifold $\Gamma_{\phi}$ of $J^1\mathbb{R}^{2n+1}$ which is the image of the graph of $\phi$ under the contact embedding $\tau: \mathbb{R}^{2n+1} \times \mathbb{R}^{2n+1} \times \mathbb{R}\longrightarrow J^1\mathbb{R}^{2n+1}$, $(x,y,z,X,Y,Z,\theta)\mapsto \big(x,Y,z, Y-e^{\theta}y, x-X, e^{\theta}-1, xY-XY+Z-z\big)$. We then notice that, since $\phi$ is 1-periodic, $\Gamma_{\phi}$ can be seen as a Legendrian submanifold of $J^1(\mathbb{R}^{2n}\times S^1)$. Moreover, since $\tau$ sends the diagonal to the 0-section and since $\phi$ is compactly supported, we have that $\Gamma_{\phi}$ coincides with the 0-section outside a compact set. Hence it can be seen as a Legendrian submanifold of $J^1(S^{2n}\times S^1)$, via compactification.

For a contactomorphism $\phi$ in $\text{Cont}^{\phantom{0}c}_0(\mathbb{R}^{2n}\times S^1)$ the spectral numbers $c^{\pm}(\phi)$ are then defined by $c^+(\phi)=c(\mu,\Gamma_{\phi})$ and $c^-(\phi)=c(1,\Gamma_{\phi})$ where $\mu$ and $1$ are the volume and unit classes respectively in $H^{\ast}(S^{2n}\times S^1)$. Recall that for a cohomology class $u$ in $H^{\ast}(B)$, where $B$ is a closed smooth manifold, and a Legendrian submanifold $L$ of $J^1B$, the real number $c(u,L)$ is obtained by minimax from a generating function quadratic at infinity of $L$ (see \cite{V,B,mio1}).

To prove that the discriminant and oscillation metrics are unbounded we will actually only use the spectral number $c^+$. In particular we will need the following properties (see \cite{mio1,mio2}).

\begin{lemma}\label{lemma: properties spectral invariants contactomorphisms}
\begin{enumerate}
\renewcommand{\labelenumi}{(\roman{enumi})}
\item For any contactomorphism $\phi$, there exists a translated point $q$ of $\phi$ with contact action equal to $c^+(\phi)$.
\item For any two $\phi$, $\psi$ we have $\lceil c^+(\phi\psi)\rceil\leq\lceil c^+(\phi)\rceil+\lceil c^+(\psi)\rceil$ (where $\lceil\cdot\rceil$ denotes the smallest integer greater or equal than a given real number).
\item For any contact isotopy $\{\phi_t\}$, $c^+(\phi_t)$ is continuous in $t$.
\item $c^+(\text{id}) = 0$.
\item If $\{\phi_t\}$ is a negative contact isotopy with $\phi_0=\text{id}$ then $c^+(\phi_t) = 0$ for all $t$.
\end{enumerate}
\end{lemma}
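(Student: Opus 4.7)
All five items should be proved in a unified framework, by reducing them to standard properties of Viterbo-type minimax spectral invariants. My starting point is that, by the constructions of Bhupal and the second author \cite{B,mio1}, the Legendrian $\Gamma_\phi \subset J^1(S^{2n}\times S^1)$ admits a generating function quadratic at infinity $F^\phi\colon (S^{2n}\times S^1)\times \mathbb{R}^N \to \mathbb{R}$ whose quadratic part at infinity is a fixed non-degenerate $Q$, and the family $\{F^{\phi_t}\}$ can be chosen to depend continuously on a smooth family $\{\phi_t\}$ after stabilisation. By definition $c^+(\phi) = c(\mu, F^\phi)$ is a critical value of $F^\phi$, obtained by minimax over a cycle carrying the Poincar\'e dual of $\mu$.

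To prove (i), I would trace through the correspondence that critical points of $F^\phi$ are precisely the intersections of $\Gamma_\phi$ with the zero section of $J^1(S^{2n}\times S^1)$, with critical value equal to the $z$-coordinate of the intersection point. Under the contact embedding $\tau$ recalled before the lemma, such intersections translate into translated points of $\phi$ viewed in $\mathbb{R}^{2n}\times S^1$, whose contact action equals this $z$-coordinate; the critical point attaining the minimax $c^+(\phi)$ therefore supplies the desired translated point. Item (iii) follows from the continuous dependence of $F^{\phi_t}$ on $t$ together with the elementary fact that Viterbo-type minimax values depend continuously on the defining function in the $C^0$-topology. For (iv), $\Gamma_{\text{id}}$ is the zero section and is generated by $Q$ itself; the minimax of $Q$ over any non-zero cohomology class equals the unique critical value $0$ of $Q$.

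For (ii), I would invoke the Chekanov--Sikorav composition of generating functions, which from $F^\phi$ and $F^\psi$ produces a GFQI for $\Gamma_{\phi\psi}$ after an integer-valued adjustment in the $S^1$-factor coming from the periodic lift to $\mathbb{R}^{2n+1}$. The standard Viterbo cup-length inequality applied to the resulting composition gives $c(\mu, F^{\phi\psi}) \leq c(\mu, F^\phi) + c(\mu, F^\psi) + k$ for some integer $k$, and taking ceilings absorbs $k$ via $\lceil a+b+k\rceil \leq \lceil a\rceil + \lceil b\rceil + k$. I expect the main technical subtlety here to be the bookkeeping of the integer shifts arising from the periodic lift and subsequent compactification, which is exactly where the ceilings in the statement originate.

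Finally, for (v), I would combine two properties of $c^+$. First, monotonicity: a negative contact isotopy $\phi_t$ can be represented by generating functions $F^{\phi_t}$ that are pointwise non-increasing in $t$ modulo the stabilising summand $Q$, so $c^+(\phi_t) \leq c^+(\text{id}) = 0$ by (iv). Second, non-negativity: for any compactly supported $\phi$, $F^\phi$ agrees with $Q$ outside a compact set of the base, so the top-class minimax necessarily picks up the critical value $0$ carried by the portion of $\Gamma_\phi$ coinciding with the zero section at infinity, giving $c^+(\phi_t) \geq 0$. Together these two bounds force $c^+(\phi_t) = 0$ for all $t$, and of the five items I expect this one, through the non-negativity property, to require the most care in referring back to the precise GFQI model used in \cite{mio1}.
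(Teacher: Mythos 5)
The paper itself offers no proof of this lemma: it is quoted verbatim from \cite{mio1,mio2}, so the only yardstick is whether your reconstruction matches the arguments there. Your overall framework (GFQI for $\Gamma_\phi$, minimax, composition formula, monotone families of generating functions) is exactly the right one, and items (iii), (iv) and (v) are argued correctly -- in particular your proof of (v) by combining monotonicity with the non-negativity $c^+\geq 0$ for Legendrians agreeing with the $0$-section at infinity is the argument used in the references.

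There are, however, two points to repair. First, a small but real slip in (i): critical points of $F^\phi$ correspond to intersections of $\Gamma_\phi$ with the \emph{$0$-wall} $\{p=0\}$, not with the zero section, and the critical value is the $z$-coordinate of that intersection point; as you wrote it (''intersections with the zero section ... with critical value equal to the $z$-coordinate'') the critical value would always be $0$. Second, and more seriously, your mechanism for (ii) does not close: from $c^+(\phi\psi)\leq c^+(\phi)+c^+(\psi)+k$ with $k$ a positive integer, the inequality $\lceil a+b+k\rceil\leq\lceil a\rceil+\lceil b\rceil+k$ leaves the $+k$ on the right-hand side, so nothing is ''absorbed'' and you only get the weaker bound $\lceil c^+(\phi\psi)\rceil\leq\lceil c^+(\phi)\rceil+\lceil c^+(\psi)\rceil+k$. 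The actual proof in \cite{mio2} does not bound the error term; it writes $\Gamma_{\phi\psi}$ as the image of $\Gamma_\psi$ under the $1$-periodic lift of $\phi$ and uses the identity $\lceil c(u,\Psi(L))\rceil=\lceil c(u,L-\Psi^{-1}(0_B))\rceil$ for $1$-periodic $\Psi$ (property (v) of Lemma \ref{lemma: properties spectral invariants legendrians} in this paper) together with the sum inequality $c(u\cup v,L_1+L_2)\geq c(u,L_1)+c(v,L_2)$; the ceilings enter because that identity only holds at the level of integer parts, not because an additive integer error is being discarded. You should replace the ''cup-length inequality plus integer $k$'' step by this reduction.
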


Recall that a point $q$ of $\mathbb{R}^{2n}\times S^1$ is called a \textit{translated point} of $\phi$ if $\phi(q)$ and $q$ are in the same Reeb orbit (i.e. they only differ by a translation in the $z$-direction) and $g(q)=0$ where $g$ is the function determined by $\phi^{\ast}\alpha = e^g \alpha$. If we view $\phi$ as a 1-periodic contactomorphism of $\mathbb{R}^{2n+1}$ then the \textit{contact action} of a translated point $q$ of $\phi$ is by definition the difference in the $z$-coordinate of $\phi(q)$ and $q$.

In order to prove that the discriminant and oscillation metrics on $\widetilde{\text{Cont}_0^{\phantom{0}c}}(\mathbb{R}^{2n}\times S^1)$ are unbounded, we will consider the pullback of $c^+$ to $\widetilde{\text{Cont}_0^{\phantom{0}c}}(\mathbb{R}^{2n}\times S^1)$ under the covering projection $$\widetilde{\text{Cont}_0^{\phantom{0}c}}(\mathbb{R}^{2n}\times S^1) \rightarrow \text{Cont}_0^{\phantom{0}c}(\mathbb{R}^{2n}\times S^1).$$ 
In other words, for an element $[\{\phi_t\}_{t\in[0,1]}]$ of $\widetilde{\text{Cont}_0^{\phantom{0}c}}(\mathbb{R}^{2n}\times S^1)$ we define
$\widetilde{c^+}\big([\{\phi_t\}_{t\in[0,1]}]\big) = c^+ (\phi_1)$.
Note that $\lceil \widetilde{c}^+ \rceil$ also satisfies the triangle inequality: 
$$\lceil \widetilde{c^+}\big([\{\phi_t\}] \cdot [\{\psi_t\}]\big)\rceil \leq \lceil \widetilde{c^+}\big([\{\phi_t\}])\rceil + \lceil \widetilde{c^+}\big(\{[\psi_t]\}\big)\rceil.$$
Moreover note that if $\{\phi_t\}$ is an embedded contact isotopy (with $\phi_0 = \text{id}$) then $\widetilde{c^+}\big([\{\phi_t\}]\big) \leq 1$: this follows from (i)-(iii)-(iv) of Lemma \ref{lemma: properties spectral invariants contactomorphisms}.

As discussed in \cite{mio2} there are compactly supported contactomorphisms of $\mathbb{R}^{2n}\times S^1$, isotopic to the identity, with arbitrarily big $c^+$. Thus, unboundedness of the discriminant and oscillation metrics follows if we can prove that for any element $[\{\phi_t\}]$ of $\widetilde{\text{Cont}_0^{\phantom{0}c}}(\mathbb{R}^{2n}\times S^1)$ with $\lceil \widetilde{c^+}\big([\{\phi_t\}]\big) \rceil = k > 0$ its discriminant and oscillation norms are greater than or equal to $k$. In order to prove this, suppose by contradiction that the discriminant metric of $[\{\phi_t\}]$ is $k' < k$. Then $[\{\phi_t\}] = \prod_{j=1}^{k'} [\{\phi_{j,t}\}]$ with all the $\{\phi_{j,t}\}$ embedded. By the discussion above we then have 
$$
\lceil \widetilde{c^+}\big([\{\phi_t\}]\big) \rceil \leq \sum_{j=1}^{k'} \lceil \widetilde{c^+}\big([\{\phi_{j,t}\}]\big) \rceil \leq k' < k
$$
contradicting our hypothesis. A similar argument also applies to the oscillation metric: if the oscillation norm of $[\{\phi_t\}]$ is $k' < k$ then in a decomposition of $[\{\phi_t\}]$ as the product of embedded monotone pieces ot most $k'$ of them are positive. Then as above (and additionally using property (v) of Lemma \ref{lemma: properties spectral invariants contactomorphisms}) we get that $\lceil \widetilde{c^+}\big([\{\phi_t\}]\big) \rceil \leq k' < k$.

Notice that for any contact manifold $(M,\xi)$ the discriminant metric on  $\widetilde{\text{Cont}_0^{\phantom{0}c}}(M,\xi)$ descends to a (non-degenerate) bi-invariant metric on the contactomorphism group $\text{Cont}_0^{\phantom{0}c}(M,\xi)$ by considering the word metric with respect to the generating set 
$$
\mathcal{E} = \{\,\phi \in \text{Cont}_0(M,\xi) \:|\: \exists \, [\{\phi_t\}_{t\in[0,1]}] \in \widetilde{\mathcal{E}} \text{ with }\phi_1=\phi\,\}
$$
(i.e. $\mathcal{E}$ is the image of $\widetilde{\mathcal{E}}$ under the covering projection). Similarly, the oscillation pseudo-metric on $\widetilde{\text{Cont}_0^{\phantom{0}c}}(M,\xi)$ also descends to a bi-invariant pseudo-metric on $\text{Cont}_0^{\phantom{0}c}(M,\xi)$. Note that the above proof also shows that these induced discriminant and oscillation metrics on $\text{Cont}_0^{\phantom{0}c}(\mathbb{R}^{2n}\times S^1)$ are unbounded. 

As was proved in \cite{mio1}, if $\{\phi_t\}$ is a positive contact isotopy then $t \mapsto c^+(\phi_t)$ is increasing. This, together with Lemma \ref{lemma: properties spectral invariants contactomorphisms}(iv), implies that $\mathbb{R}^{2n}\times S^1$ is orderable and so that the oscillation pseudo-metric on  $\widetilde{\text{Cont}_0^{\phantom{0}c}}(\mathbb{R}^{2n}\times S^1)$ is non-degenerate. Recall that orderability is equivalent to the non-existence of a positive contractible loop of contactomorphisms. In the case of $\mathbb{R}^{2n}\times S^1$, the monotonicity properties of $c^+$ mentioned above actually imply the stronger fact that there are no positive loops of contactomorphisms. Because of this, the partial order descends to $\text{Cont}_0^{\phantom{0}c}(\mathbb{R}^{2n}\times S^1)$, and so also the induced oscillation pseudo-metric on $\text{Cont}_0^{\phantom{0}c}(\mathbb{R}^{2n}\times S^1)$ is non-degenerate.

\begin{rmk}\label{remark: equivalence with my metric}
Recall that the metric in \cite{mio2} is defined by $d(\phi,\psi) = \lceil c^+(\psi\phi^{-1}) \rceil - \lfloor c^+(\psi\phi^{-1}) \rfloor$. It seems plausible that the oscillation metric on the contactomorphism group of $\mathbb{R}^{2n}\times S^1$ should be equal (or at least equivalent) to this metric. Notice however that the argument above only shows that the oscillation norm is greater than or equal to the norm of \cite{mio2}.
\end{rmk}

\section{Unboundedness for $\mathbb{R}P^{2n-1}$}\label{section: projective space}

Consider the real projective space $\mathbb{R}P^{2n-1}$ with the contact structure obtained by quotienting the standard contact structure on the sphere $S^{2n-1}$ by the antipodal action of $\mathbb{Z}_2$. We want to show that the discriminant, zigzag and oscillation metrics are unbounded on the universal cover of the identity component of the contactomorphism group of $\mathbb{R}P^{2n-1}$. We will consider first the discriminant metric, and prove that it is unbounded by showing that, for every $k$, the element in the universal cover generated by the $2k$-th iteration of the Reeb flow has discriminant norm at least $k$.

Recall the Reeb flow associated to the standard contact form $\alpha=xdy-ydx$ on $S^{2n-1}$ is given by the Hopf fibration $z\mapsto e^{2\pi it}z$. We denote by $\{\varphi_t\}_{t\in[0,1]}$ the $2k$-th iteration of the Reeb flow, i.e. $\varphi_t(z)=e^{4\pi i kt}z$, and we use the same notation $\{\varphi_t\}$ also for the induced contact isotopy in $\mathbb{R}P^{2n-1}$. We want to show that the discriminant length of $\{\varphi_t\}_{t\in[0,1]}$ is at least $k$. Similarly to \cite{Giv,Th2,mio5} we will prove this by studying a 1-parameter family of \textit{conical} generating functions for the lift to $\mathbb{R}^{2n}$ of a contact isotopy of $\mathbb{R}P^{2n-1}$.

We start by recalling how to lift to $\mathbb{R}^{2n}$ a contact isotopy $\{\phi_t\}_{t\in[0,1]}$ of $\mathbb{R}P^{2n-1}$. Notice first that $\{\phi_t\}_{t\in[0,1]}$ can be uniquely lifted to a $\mathbb{Z}_2$-equivariant contact isotopy of $S^{2n-1}$, that we will still denote by $\{\phi_t\}_{t\in[0,1]}$, by taking the flow of the pullback of the contact Hamiltonian under the projection $S^{2n-1}\rightarrow\mathbb{R}P^{2n-1}$. For every $t\in[0,1]$ we then define the lift $\Phi_t: \mathbb{R}^{2n} \rightarrow\mathbb{R}^{2n}$ of $\phi_t: S^{2n-1}\rightarrow S^{2n-1}$ by the formula
\begin{equation}\label{equation: lift}
\Phi_t(z)= \frac{|z|}{ e^{ \frac{1}{2}\,g_t( \frac{z}{|z|} ) }  }\,\phi_t(\frac{z}{|z|})
\end{equation}
where we identify $\mathbb{R}^{2n}$ with $\mathbb{C}^n$, and where $g_t:S^{2n-1}\rightarrow S^{2n-1}$ is the function determined by $\phi_t^{\phantom{t}\ast}\alpha=e^{g_t}\alpha$. Although $\Phi_t$ is only defined on $\mathbb{R}^{2n}\setminus 0$, we extend it continuously to the whole $\mathbb{R}^{2n}$ by posing $\Phi_t(0)=0$. Recall that, more generally, every contactomorphism $\phi$ of a contact manifold $\big(M,\xi=\text{ker}(\alpha)\big)$ can be lifted to a $\mathbb{R}$-equivariant symplectomorphism $\Phi$ of the symplectization $\big(SM=M\times\mathbb{R}\,,\, \omega=d(e^{\theta}\alpha)\big)$ by defining $\Phi(q,\theta)=\big(\phi(q),\theta-g(q)\big)$. If we identify $S(S^{2n-1})=S^{2n-1}\times\mathbb{R}$ with $\mathbb{R}^{2n}\setminus 0$ by the symplectomorphism $(q,\theta)\mapsto \sqrt{2}\, e^{\frac{\theta}{2}}q$ then this formula for the lift $\Phi$ reduces to (\ref{equation: lift}). If $\{\phi_t\}_{t\in[0,1]}$ is a contact isotopy of $\big(M,\xi=\text{ker}(\alpha)\big)$ generated by the contact Hamiltonian $h_t: M \rightarrow \mathbb{R}$, then the lift $\{\Phi_t\}_{t\in[0,1]}$ is the Hamiltonian isotopy of $SM$ which is generated by the $\mathbb{R}$-equivariant Hamiltonian $H_t: SM \rightarrow \mathbb{R}$, $H_t(q,\theta) = e^{\theta}h_t(q)$. In the case of $M=S^{2n-1}$, if $\{\phi_t\}_{t\in[0,1]}$ is generated by $h_t: S^{2n-1}\rightarrow\mathbb{R}$ then $\Phi_t: \mathbb{R}^{2n} \rightarrow\mathbb{R}^{2n}$ is generated by $H_t: \mathbb{R}^{2n} \rightarrow\mathbb{R}$, $H_t(z) = \frac{|z|^2}{2} h_t\big(\frac{z}{|z|}\big)$. Note that $\mathbb{R}$-equivariance of the Hamiltonian $H_t$ reduces in the case of the sphere to the property of being homogeneous of degree $2$, i.e. $H_t(\lambda z) = \lambda^2H_t(z)$ for every $\lambda\in\mathbb{R}_+$. Moreover, if $h_t: S^{2n-1}\rightarrow\mathbb{R}$ is the lift of a function on $\mathbb{R}P^{2n-1}$ then $H_t$ is \textit{conical}, i.e. $H_t(\lambda z) = \lambda^2H_t(z)$ for every $\lambda\in\mathbb{R}$.

As was proved by Givental \cite{Giv}, or by the second author \cite{mio5} following Th\'{e}ret \cite{Th2}, the lift $\{\Phi_t\}_{t\in[0,1]}$ of a contact isotopy $\{\phi_t\}_{t\in[0,1]}$ of $\mathbb{R}P^{2n-1}$ has a 1-parameter family of generating functions $F_t: \mathbb{R}^{2n}\times\mathbb{R}^{2M} \rightarrow \mathbb{R}$, $t\in[0,1]$, which are \textit{conical} i.e. for each $F_t$ we have that $F_t(\lambda z,\lambda\zeta) = \lambda^2 F_t(z,\zeta)$ for every $\lambda\in\mathbb{R}$. Because of this property, the functions $F_t$ are determined by the induced functions $f_t: \mathbb{R}P^{2n+2M-1}\rightarrow\mathbb{R}$. These functions are useful to study the discriminant length of contact isotopies of $\mathbb{R}P^{2n-1}$ because of the following lemma (which plays a crucial role also in \cite{Giv} and \cite{mio5}).

\begin{lemma}\label{lemma: discriminant points}
For every $t\in[0,1]$, discriminant points of $\phi_t:\mathbb{R}P^{2n-1} \rightarrow \mathbb{R}P^{2n-1}$ are in 1-1 correspondence with critical points of $f_t: \mathbb{R}P^{2n+2M-1}\rightarrow\mathbb{R}$ with critical value $0$.
\end{lemma}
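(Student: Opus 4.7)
The plan is to construct a chain of correspondences passing through the fixed points of the lifted Hamiltonian flow $\Phi_t$ on $\mathbb{R}^{2n}$. The two main ingredients are the standard fact that critical points of a generating function $F_t: \mathbb{R}^{2n+2M}\to\mathbb{R}$ correspond bijectively to fixed points of the Hamiltonian symplectomorphism $\Phi_t$, together with the Euler identity for the degree-$2$ homogeneous function $F_t$, namely
$$ (z,\zeta)\cdot \nabla F_t(z,\zeta) \;=\; 2\,F_t(z,\zeta). $$

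First I would translate the condition \emph{critical point of $f_t$ with critical value $0$} into the condition \emph{nonzero critical point of $F_t$ on the ambient space}. A critical point of $f_t$ on $\mathbb{R}P^{2n+2M-1}$ lifts to a critical point $(z,\zeta)\in S^{2n+2M-1}$ of the restriction $F_t|_{S^{2n+2M-1}}$, so by Lagrange multipliers there is some $\mu\in\mathbb{R}$ with $\nabla F_t(z,\zeta) = \mu\,(z,\zeta)$. Combining with the Euler identity gives $\mu = 2F_t(z,\zeta)$, so the critical value vanishes if and only if $\mu=0$, if and only if $(z,\zeta)$ is an honest critical point of $F_t$ on $\mathbb{R}^{2n+2M}\setminus\{0\}$. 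Conversely, any such nonzero critical point lies on an entire line of critical points (by the conical relation $F_t(\lambda z,\lambda\zeta)=\lambda^2 F_t(z,\zeta)$) and descends to a critical point of $f_t$ with value $0$. Thus the critical points of $f_t$ with critical value $0$ are in bijection with the $\mathbb{R}^{\ast}$-orbits of nonzero critical points of $F_t$.

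Next I would identify these with the desired discriminant points. By the defining property of the generating function, the nonzero critical points of $F_t$ on $\mathbb{R}^{2n+2M}$ correspond bijectively to the nonzero fixed points of $\Phi_t$ on $\mathbb{R}^{2n}$. From the lifting formula (\ref{equation: lift}), a nonzero $z_0\in\mathbb{R}^{2n}$ satisfies $\Phi_t(z_0)=z_0$ if and only if $q:=z_0/|z_0|\in S^{2n-1}$ satisfies $\phi_t(q)=q$ and $g_t(q)=0$, i.e.\ exactly when $q$ is a discriminant point of the $\mathbb{Z}_2$-equivariant lift of $\phi_t$ to $S^{2n-1}$ (taking norms in the lifting formula forces $e^{-g_t(q)/2}=1$, and equality of vectors then forces $\phi_t(q)=q$). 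A short verification, using $a^{\ast}\alpha=-\alpha$ for the antipodal map $a$ together with the equivariance $\phi_t\circ a=a\circ\phi_t$, shows $g_t(-q)=g_t(q)$, so such discriminant points come in antipodal pairs and parametrize the discriminant points of $\phi_t$ on $\mathbb{R}P^{2n-1}$. Chaining these bijections, and noting that $\mathbb{R}^{\ast}$-orbits on $\mathbb{R}^{2n}\setminus\{0\}$ correspond to $\mathbb{Z}_2$-orbits on $S^{2n-1}$ via $z\mapsto z/|z|$, yields the required $1$-$1$ correspondence.

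The main obstacle will be the bookkeeping of the nested quotients and symmetries, namely checking that the $\mathbb{R}^{\ast}$-actions on $\mathbb{R}^{2n+2M}$ and on $\mathbb{R}^{2n}$ and the $\mathbb{Z}_2$-action on $S^{2n-1}$ are compatible with each of the three bijections above. Beyond this essentially formal step, the analytic content reduces to Euler's one-line identity together with the standard correspondence between critical points of a generating function and fixed points of the generated Hamiltonian symplectomorphism, so no delicate estimate is required.
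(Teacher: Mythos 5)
Your argument is correct and follows essentially the same chain of bijections as the paper's proof (discriminant points of $\phi_t$ $\leftrightarrow$ lines of fixed points of $\Phi_t$ $\leftrightarrow$ lines of critical points of the conical $F_t$ $\leftrightarrow$ critical points of $f_t$ with critical value $0$), merely traversed in the opposite direction and with the Euler-identity/Lagrange-multiplier step made more explicit than in the paper. One small slip: the antipodal map satisfies $a^{\ast}\alpha=\alpha$ (not $-\alpha$) for $\alpha=xdy-ydx$ -- which is precisely why $\alpha$ descends to $\mathbb{R}P^{2n-1}$ -- but your conclusion $g_t(-q)=g_t(q)$ holds either way, so the argument is unaffected.
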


\begin{proof}
Given a contactomorphism $\phi$ of $\big(M,\xi=\text{ker}(\alpha)\big)$, every point of the symplectization $SM$ which is in the fiber above a discriminant point of $\phi$ is a fixed point of the lift $\Phi$. In particular, for every $t\in[0,1]$ we have a 1-1 correspondence between discriminant points of $\phi_t:\mathbb{R}P^{2n-1} \rightarrow \mathbb{R}P^{2n-1}$ and lines of fixed points of $\Phi_t: \mathbb{R}^{2n}\rightarrow\mathbb{R}^{2n}$. On the other hand, fixed points of $\Phi_t$ are in 1-1 correspondence with critical points of the generating function $F_t: \mathbb{R}^{2n}\times\mathbb{R}^{2M}\rightarrow\mathbb{R}$. Since the function $F_t$ is conical, critical points come in lines and have always critical value $0$. Moreover we have a 1-1 correspondence between lines of critical points of $F_t$ and critical points of $f_t$ with critical value $0$. Hence, critical points of $f_t$ of critical value $0$ are in 1-1 correspondence with discriminant points of $\phi_t$.
\end{proof}

In order to detect discriminant points and estimate the discriminant length, we will look at changes in the topology of the subsets $N_t:=\{f_t\leq0\}$ of $\mathbb{R}P^{2n+2M-1}$ for $t\in[0,1]$. As in \cite{Giv,Th2,mio5} the tool we use is the \emph{cohomological index} for subsets of projective spaces, that was introduced by Fadell and Rabinowitz \cite{FR}.

The cohomological index of a subset $X$ of a real projective space $\mathbb{R}P^m$ is defined as follows. Recall that $H^{\ast}(\mathbb{R}P^m;\mathbb{Z}_2)=\mathbb{Z}_2[u]/u^{m+1}$ where $u$ is the generator of $H^1(\mathbb{R}P^m;\mathbb{Z}_2)$. We define
$$
\text{ind}(X)=1+\text{max}\{\,k\in\mathbb{N}\;|\;i_X^{\phantom{X}\ast}(u^k)\neq 0\,\}
$$
where $i_X: X\hookrightarrow\mathbb{R}P^m$ is the inclusion (and set by definition $\text{ind}(\emptyset)=0$). In other words, $\text{ind}(X)$ is the dimension over $\mathbb{Z}_2$ of the image of the homomorphism $i_X^{\phantom{X}\ast}: H^{\ast}(\mathbb{R}P^m;\mathbb{Z}_2) \rightarrow H^{\ast}(X;\mathbb{Z}_2)$. Given a conical function $F: \mathbb{R}^{m}\rightarrow\mathbb{R}$ we denote by $\text{ind}(F)$ the index of $\{f\leq0\}\subset\mathbb{R}P^{m-1}$ where $f: \mathbb{R}P^{m-1}\rightarrow\mathbb{R}$ is the function on projective space induced by $F$. The following lemma was proved by Givental \cite[Appendices A and B]{Giv} (see also \cite[Lemma 5.2]{mio5}).

\begin{lemma}\label{lemma: giv}
Let $F$ and $G$ be conical functions defined on $\mathbb{R}^m$ and $\mathbb{R}^{m'}$ respectively, and consider the direct sum $F\oplus G: \mathbb{R}^{m+m'}\rightarrow\mathbb{R}$. Then we have
$$
\text{ind} \big(F\oplus G\big) = \text{ind}(F) + \text{ind}\big(G).
$$
\end{lemma}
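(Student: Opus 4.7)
The plan is to reduce Lemma~\ref{lemma: giv} to the classical additivity of the Fadell--Rabinowitz cohomological $\mathbb{Z}_2$-index under joins of free $\mathbb{Z}_2$-spaces. Denote by $\tilde{X} \subset S^{m-1}$, $\tilde{Y} \subset S^{m'-1}$, and $\tilde{Z} \subset S^{m+m'-1}$ the $\mathbb{Z}_2$-invariant preimages of $\{f\le 0\}$, $\{g\le 0\}$, and $\{h\le 0\}$ (with $h$ the function induced by $F\oplus G$), so that $\text{ind}(F)$, $\text{ind}(G)$, and $\text{ind}(F\oplus G)$ are the cohomological indices of their $\mathbb{Z}_2$-quotients in projective space.

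First I would identify $S^{m+m'-1}$ equivariantly with the join $S^{m-1} * S^{m'-1}$ via the parameterization $(u,v,s)\mapsto(\sqrt{1-s}\,u,\sqrt{s}\,v)$, $u\in S^{m-1}$, $v\in S^{m'-1}$, $s\in[0,1]$, on which the antipodal $\mathbb{Z}_2$-action becomes $(u,v,s)\mapsto(-u,-v,s)$. Degree-$2$ conicity of $F$ and $G$ turns the defining inequality of $\tilde{Z}$ into $(1-s)F(u)+sG(v)\le 0$, and from this the inclusion $\tilde{X}*\tilde{Y}\subset\tilde{Z}$ is immediate (a convex combination of two non-positive numbers is non-positive). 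Combined with monotonicity of the cohomological index and the classical additivity formula $\text{ind}((\tilde{A}*\tilde{B})/\mathbb{Z}_2)=\text{ind}(\tilde{A}/\mathbb{Z}_2)+\text{ind}(\tilde{B}/\mathbb{Z}_2)$ for free $\mathbb{Z}_2$-spaces (proved via a K\"unneth-type spectral sequence in Borel equivariant cohomology, or equivalently via the non-existence of equivariant maps to lower-dimensional spheres), this yields immediately the lower bound $\text{ind}(F\oplus G)\ge \text{ind}(F)+\text{ind}(G)$.

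For the matching upper bound I would construct a $\mathbb{Z}_2$-equivariant continuous map $\tilde{Z}\to\tilde{X}*\tilde{Y}$. Composing with the join of equivariant classifying maps $\tilde{X}\to S^{a-1}$ and $\tilde{Y}\to S^{b-1}$ (with $a=\text{ind}(F)$, $b=\text{ind}(G)$) would then yield an equivariant map $\tilde{Z}\to S^{a+b-1}$, giving $\text{ind}(F\oplus G)\le a+b$. The natural candidate is the endpoint map of the flow on $\tilde{Z}$ that keeps $u$ and $v$ fixed and moves $s$ according to $\partial_t s=\max(F(u),0)-\max(G(v),0)$: one checks that this flow is tangent to $\tilde{Z}$ (since the time-derivative of $(1-s)F(u)+sG(v)$ equals $(G(v)-F(u))(\max(F(u),0)-\max(G(v),0))\le 0$ throughout $\tilde{Z}$), is stationary precisely on $\tilde{X}*\tilde{Y}$, and drives any point with $F(u)>0$ (which forces $G(v)<0$) to the $s=1$ face, and symmetrically for $G(v)>0$.

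The main obstacle is continuity of this retraction across the transition locus $\{F(u)=0\}\cup\{G(v)=0\}$: the flow speed degenerates and the putative endpoint changes abruptly there, so the naive endpoint map is discontinuous. I would deal with this by first replacing $\tilde{Z}$ with the open strict sublevel set $\tilde{Z}^\circ=\{(1-s)F(u)+sG(v)<0\}$, on which the flow has strictly positive speed wherever it is non-trivial and the endpoint map is continuous, and then arguing that $\tilde{Z}^\circ$ has the same cohomological index as $\tilde{Z}$ using the standard open-neighborhood stability of the Fadell--Rabinowitz index (upgrading if necessary via a small equivariant perturbation of $F$ and $G$ with the same index). An equivalent route that sidesteps the flow is to build the equivariant map $\tilde{Z}\to S^{a+b-1}$ directly from a $\mathbb{Z}_2$-invariant partition of unity subordinate to the open cover of $\tilde{Z}$ by $\{F>0\}$, $\{G>0\}$, and a neighborhood of $\tilde{X}\times\tilde{Y}$, interpolating pulled-back classifying maps from each piece.
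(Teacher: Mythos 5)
The paper does not actually prove this lemma --- it is quoted from Givental's Appendices A and B in \cite{Giv} (with the quadratic-form special case in \cite[Lemma 5.2]{mio5}) --- so your argument has to stand on its own, and as written it has gaps at each of its three main steps. The join reduction itself is fine: with $(u,v,s)\mapsto(\sqrt{1-s}\,u,\sqrt{s}\,v)$ one does get $\tilde X*\tilde Y\subset\tilde Z$. But the lower bound then rests on the ``classical additivity formula'' for the index of a join, and the direction you need, superadditivity, is not a citable classical fact: what is classical is the subadditivity $\text{ind}(\tilde A*\tilde B)\le\text{ind}(\tilde A)+\text{ind}(\tilde B)$ (cup products relative to the two cone pieces of the join, i.e.\ the Fadell--Husseini join theorem). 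Worse, the superadditivity you invoke is essentially equivalent to the lemma itself: taking $F(z)=|z|^2\,d(z/|z|,\tilde X)$ and $G(w)=|w|^2\,d(w/|w|,\tilde Y)$ for arbitrary closed symmetric subsets $\tilde X\subset S^{m-1}$, $\tilde Y\subset S^{m'-1}$, one checks that $\{F\oplus G\le 0\}\cap S^{m+m'-1}$ is \emph{exactly} $\tilde X*\tilde Y$, so the lemma for these conical functions \emph{is} the join formula for arbitrary closed symmetric sets. Invoking it as a black box therefore assumes the hard part. The parenthetical justification ``via the non-existence of equivariant maps to lower-dimensional spheres'' confuses the cohomological index with the genus/coindex, and this same confusion sinks the upper bound: a space with cohomological index $a$ need not admit any equivariant map to $S^{a-1}$ (that is controlled by the genus, which can be strictly larger), so the classifying maps $\tilde X\to S^{a-1}$, $\tilde Y\to S^{b-1}$ you want to join need not exist. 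This last part is repairable: an equivariant map $\tilde Z\to\tilde X*\tilde Y$ together with monotonicity and the genuine join subadditivity already gives the upper bound.

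The retraction is also not rescued by your proposed fixes. No retraction $r:\tilde Z\to\tilde X*\tilde Y$ that fixes $(u,v)$ and moves only $s$ can be continuous: if $F(u)>0$ (hence $G(v)<0$) the set of $s$ with $(u,v,s)\in\tilde X*\tilde Y$ is the single point $s=1$, so $r$ must send the whole segment there, while for nearby $(u',v)$ with $F(u')\le 0$ that set is all of $[0,1]$ and $r$, being a retraction, must be the identity on it; letting $F(u)\to 0^{+}$ at fixed $s<1$ then forces a jump. The discontinuity lives on the interior hypersurface $\{F(u)=0\}\cup\{G(v)=0\}$, not on $\partial\tilde Z$, so passing to the open strict sublevel set $\tilde Z^{\circ}$ changes nothing, and the partition-of-unity variant runs into the same incompatibility of the pieces along that locus. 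A correct deformation does exist but must move $u$ and $v$ as well (e.g.\ retracting the excess region $\{F(u)>0\}\cap\tilde Z$ onto its boundary portion in $\{F(u)=0\}$); carrying this out, together with an actual proof of the superadditive half of the join formula for the cohomological index, is precisely the content of Givental's appendices that the lemma is deferring to.
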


Given a contact isotopy $\{\phi_t\}$ of $\mathbb{R}P^{2n-1}$ we define
$$\mu([\phi_t])=\text{ind}(F_0)-\text{ind}(F_1)$$
where $F_t: \mathbb{R}^{2n} \times \mathbb{R}^{2M} \rightarrow \mathbb{R}$ is a 1-parameter family of generating functions for the induced Hamiltonian isotopy $\Phi_t$ of $\mathbb{R}^{2n}$.

\begin{lemma}\label{lemma: well defined}
$\mu([\phi_t])$ is well-defined.
\end{lemma}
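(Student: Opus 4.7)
The goal is to show that $\text{ind}(F_0)-\text{ind}(F_1)$ depends neither on the choice of conical generating function family $\{F_t\}$ for a fixed isotopy $\{\phi_t\}$, nor on the choice of representative $\{\phi_t\}$ within its homotopy class in $\widetilde{\text{Cont}_0}(\mathbb{R}P^{2n-1})$. The plan is to combine the uniqueness theorem for conical generating functions (Th\'eret and Givental, following Viterbo's uniqueness in the quadratic-at-infinity setting) with Lemma \ref{lemma: giv} and the topological invariance of the cohomological index.

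The uniqueness theorem I would invoke states: any two continuous families of conical generating functions for the same (possibly parameter-dependent) Hamiltonian isotopy $\{\Phi_t\}$ agree, after stabilization by a common nondegenerate conical quadratic form, up to a continuous family of fiber-preserving diffeomorphisms of the total space. I would then check that each of these two operations leaves $\text{ind}(F_0)-\text{ind}(F_1)$ invariant. A fiber-preserving diffeomorphism $\Psi_t$ of $\mathbb{R}^{2n}\times\mathbb{R}^{2M}$ descends to a diffeomorphism of $\mathbb{R}P^{2n+2M-1}$ carrying $\{f'_t \leq 0\}$ onto $\{f_t\leq 0\}$; topological invariance of the cohomological index gives $\text{ind}(F'_t)=\text{ind}(F_t)$ for every $t$, and in particular the difference is preserved. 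A stabilization $F'_t = F_t\oplus Q$ by a fixed nondegenerate quadratic form $Q$ (automatically conical, since it is homogeneous of degree two) changes the index by the $t$-independent constant $\text{ind}(Q)$ via Lemma \ref{lemma: giv}, which cancels in $\text{ind}(F'_0)-\text{ind}(F'_1)$. Together, these two checks show that $\text{ind}(F_0)-\text{ind}(F_1)$ depends only on $\{\phi_t\}$ and not on the chosen generating function family.

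To handle a homotopy of isotopies with fixed endpoints, I would apply the same reasoning to a 2-parameter family $\{F_t^s\}$ associated to a homotopy $\{\phi_t^s\}_{s\in[0,1]}$. The key observation is that at $t=0$ and $t=1$ the contactomorphisms $\phi_0^s=\text{id}$ and $\phi_1^s=\phi_1$ are constant in $s$, so $\{F_0^s\}_s$ and $\{F_1^s\}_s$ are continuous families of conical generating functions for a single fixed contactomorphism. The parametric version of the uniqueness theorem then yields that, up to a continuous family of fiber-preserving diffeomorphisms and a fixed stabilization, each of these families is independent of $s$; by the two invariance properties above, $\text{ind}(F_0^s)$ and $\text{ind}(F_1^s)$ are each constant in $s$, and hence $\mu_s$ is constant. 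The main technical obstacle is the parametric uniqueness theorem for conical generating functions, which I would cite from the existing literature of Th\'eret and Givental rather than reproving; once it is in hand the rest of the argument is a direct verification using Lemma \ref{lemma: giv} together with topological invariance of the cohomological index.
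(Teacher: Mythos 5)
Your proposal is correct and follows essentially the same route as the paper: invoke Th\'eret's uniqueness theorem for conical generating functions, check invariance of $\text{ind}(F_0)-\text{ind}(F_1)$ under fiber-preserving diffeomorphisms (via descent to $\mathbb{R}P^{2n+2M-1}$) and under stabilization (via Lemma \ref{lemma: giv}), and then handle homotopy invariance by observing that the $2$-parameter family restricts at $t=0$ and $t=1$ to families of generating functions for a single fixed contactomorphism. The only point to make explicit is that the fiber-preserving diffeomorphisms must be $\mathbb{Z}_2$- and $\mathbb{R}$-equivariant for the descent to projective space to make sense, which is indeed part of the uniqueness statement in the conical setting.
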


\begin{proof}
It was proved by Th\'{e}ret\footnote{Th\'{e}ret proved this for the lift to $\mathbb{R}^{2n}$ of a Hamiltonian isotopy of $\mathbb{C}P^{n-1}$. Exactly the same proof goes through also for the lift of a contact isotopy of $\mathbb{R}P^{2n-1}$, by replacing the $S^1$-symmetry by a $\mathbb{Z}_2$-symmetry.} \cite{Th2} that all 1-parameter families of conical generating functions $F_t$ associated to a fixed contact isotopy $\phi_t$ of $\mathbb{R}P^{2n-1}$ differ only by $\mathbb{Z}_2$- and $\mathbb{R}$-equivariant fiber preserving diffeomorphism and stabilization. Since a  $\mathbb{Z}_2$- and $\mathbb{R}$-equivariant diffeomorphism of $\mathbb{R}^{2n}\times\mathbb{R}^{2M}$ descends to a diffeomorphism of $\mathbb{R}P^{2n+2M-1}$, it does not affect the cohomological index of the sublevel sets of the generating functions. Regarding stabilization, recall that a family of functions $F'_t: \mathbb{R}^{2n}\times\mathbb{R}^{2M}\times\mathbb{R}^{2M'}\rightarrow\mathbb{R}$ is said to be obtained by stabilization from $F_t: \mathbb{R}^{2n}\times\mathbb{R}^{2M}\rightarrow\mathbb{R}$ if $F'_t = F_t \oplus Q$ where $Q: \mathbb{R}^{2M'}\rightarrow\mathbb{R}$ is a quadratic form. The invariance of $\mu([\phi_t])$ under stabilization of the generating functions follows thus from Lemma \ref{lemma: giv}. We have just shown that $\mu([\phi_t])$ does not depend on the choice of a 1-parameter family $F_t$ of generating functions for $\phi_t$. We now show that $\mu([\phi_t])$ does not depend on the choice of a representative of the homotopy class $[\phi_t]$. Let $\{\phi_t'\}_{t\in[0,1]}$ be another representative of $[\phi_t]$, and let $\{\phi_t^{\phantom{t}s}\}_{s\in[0,1]}$ be a homotopy with fixed endpoints joining $\{\phi_t^{\phantom{t}0}\}=\{\phi_t\}$ to $\{\phi_t^{\phantom{t}1}\}=\{\phi_t'\}$. Then we have a smooth 2-parameter family of functions $f_t^{\phantom{t}s}: \mathbb{R}P^{2n+2M-1}\rightarrow\mathbb{R}$ associated to the $\phi_t^{\phantom{t}s}$. In particular we get a 1-parameter family $f_1^{\phantom{t}s}$, $s\in[0,1]$ of functions associated to the same contactomorphism $\phi_1$. Arguing as in \cite[Lemma 4.8]{Th2} we see that there is a smooth isotopy $\Psi_s$, $s\in[0,1]$, of $\mathbb{R}P^{2n+2M-1}$ such that $f_1^{\phantom{1}s}\circ\Psi_s=f_1^{\phantom{1}0}$ for all $s$. So the sublevel sets $N_1^{\phantom{1}s}=\{\,f_1^{\phantom{t}s}\leq0\,\}$ are all diffeomorphic and hence their cohomological index is the same. The same argument also applies to show that all the $N_0^{\phantom{1}s}$ are diffeomorphic, and so in particular we see that $\mu([\phi_t])=\mu([\phi_t'])$.
\end{proof}

In \cite{Giv}, $\mu([\phi_t])$ is called the \textit{non-linear Maslov index} of the contact isotopy $\{\phi_t\}$ of $\mathbb{R}P^{2n-1}$. The non-linear Maslov index is a quasimorphism on the universal cover of the contactomorphism group of $\mathbb{R}P^{2n-1}$, as follows (see \cite{gabi}) from the next lemma. This lemma will be also needed later on, in our proof of unboundedness of the discriminant norm.

\begin{lemma}[\cite{Giv}, Theorem 9.1]\label{lemma: quasimorphism}
For every contact isotopy $\phi_t$ and contactomorphism $\psi$ of $\mathbb{R}P^{2n-1}$ we have that
$$
|\,\mu([\psi\circ\phi_t]) - \mu([\phi_t])\,| \leq 2n.
$$
\end{lemma}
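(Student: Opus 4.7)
The proof strategy is to control the effect on the generating function index when the isotopy $\phi_t$ is pre-composed with a fixed contactomorphism $\psi$, by realising a generating function for the lift of $\psi \circ \phi_t$ in terms of those of $\phi_t$ and $\psi$ separately, and then appealing to the additivity property of the cohomological index under direct sums (Lemma \ref{lemma: giv}).

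First, I would fix a conical generating function $G: \mathbb{R}^{2n} \times \mathbb{R}^{2N} \to \mathbb{R}$ for the lift $\Psi$ of $\psi$ to $\mathbb{R}^{2n}$, and a smooth $1$-parameter family $F_t: \mathbb{R}^{2n} \times \mathbb{R}^{2M} \to \mathbb{R}$ of conical generating functions for $\Phi_t$. Applying the standard Chekanov--Sikorav composition formula, adapted to the $\mathbb{Z}_2$-equivariant conical setting (as in Th\'eret \cite{Th2}), yields a smooth family of conical generating functions $H_t$ on $\mathbb{R}^{2n} \times \mathbb{R}^{2M+2N+2n}$ for $\Psi \circ \Phi_t$, of the schematic form
$$H_t(q; \xi, \eta, q', p) = F_t(q'; \xi) + G(q; \eta) + \langle p,\, q - q' \rangle.$$
Up to a fibre-preserving change of coordinates, $H_t$ is obtained from the direct sum $F_t \oplus G$ by adding a non-degenerate quadratic form $Q$ of signature $(2n,2n)$ in the variables $(q', p)$.

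Next comes the index comparison. By Lemma \ref{lemma: giv}, the direct sum satisfies the exact additivity
$$\textrm{ind}(F_t \oplus G \oplus Q) = \textrm{ind}(F_t) + \textrm{ind}(G) + \textrm{ind}(Q),$$
and the right-hand side differs from $\textrm{ind}(F_t)$ by the constant $\textrm{ind}(G) + \textrm{ind}(Q)$ independent of $t$. The function $H_t$ itself is not literally this direct sum, but it agrees with it outside a subspace determined by the composition constraint, whose codimension (in the ambient projective space) is exactly $2n$. Givental's Lemmas A and B in \cite{Giv} on the behavior of the Fadell--Rabinowitz index under modifications supported in a codimension-$2n$ subspace then yield
$$\bigl|\,\textrm{ind}(H_t) - \textrm{ind}(F_t) - \textrm{ind}(G) - \textrm{ind}(Q)\,\bigr| \leq 2n$$
for each $t$, with the error bounded uniformly in $t$.

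Finally, since $\mu([\psi \circ \phi_t]) = \textrm{ind}(H_0) - \textrm{ind}(H_1)$ and $\mu([\phi_t]) = \textrm{ind}(F_0) - \textrm{ind}(F_1)$, subtracting eliminates the $t$-independent contributions of $G$ and $Q$, and what remains is bounded in absolute value by $2n$, giving the desired estimate. The main obstacle is the technical step in the second paragraph: quantifying precisely how much the Fadell--Rabinowitz cohomological index can change when the sublevel set is modified inside a codimension-$2n$ subspace. This is not a formal consequence of Lemma \ref{lemma: giv}, which handles only genuine direct sums; it requires Givental's more delicate analysis of the $\mathbb{Z}_2$-equivariant cohomology of the pairs $(\mathbb{R}P^{2n+2M+2N+2n-1}, \{H_t \leq 0\})$ relative to their counterparts for the direct sum, and is where the constant $2n = \dim \mathbb{R}P^{2n-1} + 1$ enters.
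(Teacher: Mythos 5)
Your proposal follows essentially the same route as the paper, which itself only sketches the argument and refers to Givental for details: compose the generating functions, compare the result with the direct sum $F_t\oplus G$ (plus a quadratic form), use the exact additivity of the cohomological index for direct sums (Lemma \ref{lemma: giv}), and control the discrepancy coming from the fact that the composed generating function is not literally a direct sum. Two corrections, though. First, you have the geometry of the comparison backwards: the composition generating function does \emph{not} agree with the direct sum outside a small set --- it agrees with it \emph{on} a subspace of codimension $2n$ (and may differ everywhere else); agreeing outside a codimension-$2n$ subspace would force the two continuous functions to be equal. Second, the ``delicate analysis'' you defer to Givental is in fact a single named tool, the Lefschetz property of the Fadell--Rabinowitz index: if $X'$ is a hyperplane section of $X\subset\mathbb{R}P^m$ then $\text{ind}(X')\geq\text{ind}(X)-1$, while monotonicity gives $\text{ind}(X')\leq\text{ind}(X)$. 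Iterating this $2n$ times over the codimension-$2n$ subspace $V$ on which the two functions (hence their sublevel sets) coincide yields $|\text{ind}(H_t)-\text{ind}(F_t\oplus G\oplus Q)|\leq 2n$, which is the estimate you wanted; with this substitution your argument matches the paper's.
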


The proof of this lemma follows from Lemma \ref{lemma: giv} and from the following two properties of generating functions and of the cohomological index.

\begin{enumerate}
\renewcommand{\labelenumi}{(\roman{enumi})}
\item Quasiadditivity of generating functions: Although the generating function for the composition $\psi\circ\phi_t$ is different\footnote{See the composition formula for example in \cite{Th2}.} than the direct sum of the generating functions for $\psi$ and $\phi_t$, it coincides with it on a subspace of codimension $2n$.
\item The Lefschetz property for the cohomological index: If $X'$ is a hyperplane section of $X\subset\mathbb{R}P^m$ then $\text{ind}(X') \geq \text{ind}(X)-1$.
\end{enumerate}
We refer to Givental \cite{Giv} for more details about this proof.

As we will now explain, by looking at how the non-linear Maslov index changes along a contact isotopy we can get information about the discriminant points at every time. Consider a contact isotopy $\{\phi_t\}_{t\in[0,1]}$ of $\mathbb{R}P^{2n-1}$. If $\phi_t$ does not have any discriminant point for all $t$ in a subinterval $(t_0,t_1]$ of $[0,1]$ then we must have $\mu\big([\{\phi_t\}_{t\in[0,t_0]}]\big) = \mu\big([\{\phi_t\}_{t\in[0,t_1]}]\big)$. Indeed, by Lemma \ref{lemma: discriminant points} we know that $0$ is a regular value of the corresponding functions $f_t: \mathbb{R}P^{2n+2M-1}\rightarrow\mathbb{R}$ for all $t\in(t_0,t_1]$, hence the sublevel sets $N_t\subset\mathbb{R}P^{2n+2M-1}$ are all diffeomorphic (see for example \cite[Lemma 2.17]{mio1}) and so in particular $\mu\big([\{\phi_t\}_{t\in[0,t_0]}]\big) = \mu\big([\{\phi_t\}_{t\in[0,t_1]}]\big)$. Suppose now that $\mu\big([\{\phi_t\}_{t\in[0,t_0]}]\big) \neq \mu\big([\{\phi_t\}_{t\in[0,t_1]}]\big)$ and that there is a single value of $t$ in $(t_0,t_1]$ for which $\phi_t$ belongs to the discriminant. Then we claim that the set of discriminant points of $\phi_t$ has index greater or equal than $|\mu\big([\{\phi_t\}_{t\in[0,t_0]}]\big) - \mu\big([\{\phi_t\}_{t\in[0,t_1]}]\big)|$. Since the set of discriminant points of $\phi_t$ is a subset of $\mathbb{R}P^{2n-1}$ its index is at most $2n$, and so it follows from our claim that
$|\mu\big([\{\phi_t\}_{t\in[0,t_0]}]\big) - \mu\big([\{\phi_t\}_{t\in[0,t_1]}]\big)| \leq 2n$. The claim can be seen as follows. It was proved by Th\'{e}ret \cite[Proposition 84]{Th1} (see also \cite[Lemma 5.3]{mio5}) that if $\mu\big([\{\phi_t\}_{t\in[0,t_0]}]\big) \neq \mu\big([\{\phi_t\}_{t\in[0,t_1]}]\big)$ and there is a single value of $t$ in $(t_0,t_1]$ for which $\phi_t$ belongs to the discriminant, then the set of critical points of $f_t$ with critical value $0$ has index greater or equal than $|\mu\big([\{\phi_t\}_{t\in[0,t_0]}]\big) - \mu\big([\{\phi_t\}_{t\in[0,t_1]}]\big)|$. As we have seen in Lemma \ref{lemma: discriminant points}, there is a bijection between the set of critical points of $f_t$ with critical value $0$ and the set of discriminant points of $\phi_t$. Still, it is not clear a priori that these two sets should have the same index, because they are contained in different projective spaces: the set of discriminant points of $\phi_t$ is contained in $\mathbb{R}P^{2n-1}$, while the set of critical points of $f_t$ with critical value $0$ is contained in $\mathbb{R}P^{2n+2M-1}$. However, the claim follows from the fact that the bijection described in Lemma \ref{lemma: discriminant points} is the restriction of a map $\underline{i}: \mathbb{R}P^{2n-1} \hookrightarrow \mathbb{R}P^{2n+2M-1}$ which is diffeomorphic to the standard inclusion $i: \mathbb{R}P^{2n-1} \hookrightarrow \mathbb{R}P^{2n+2M-1}$ (i.e. there is a diffeomorphism of $\mathbb{R}P^{2n+2M-1}$ that intertwines $i$ and $\underline{i}$). This fact can be seen by looking at the identifications underlying the construction of generating functions for Hamiltonian symplectomorphisms of $\mathbb{R}^{2n}$. Recall that a generating function for a Hamiltonian symplectomorphism $\Phi$ of $\mathbb{R}^{2n}$ is actually a generating function for the Lagrangian submanifold $\Gamma_{\Phi}$ of $T^{\ast}\mathbb{R}^{2n}$ that is the image of the graph of $\Phi$ under the identification $\tau: \overline{\mathbb{R}^{2n}} \times \mathbb{R}^{2n} \rightarrow T^{\ast}\mathbb{R}^{2n}$, $\tau(x,y,X,Y) = \big(\frac{x+X}{2},\frac{y+Y}{2},Y-y,x-X\big)$. Fixed points of $\Phi$ correspond to intersections of $\Gamma_{\Phi}$ with the 0-section. If $F: \mathbb{R}^{2n} \times \mathbb{R}^{2M} \rightarrow \mathbb{R}$ is a generating function for $\Phi$ then there is a diffeomorphism from the set of fiber critical points of $F$ to $\Gamma_{\Phi}$, given by the restriction of the map $\mathbb{R}^{2n} \times \mathbb{R}^{2M} \rightarrow T^{\ast}\mathbb{R}^{2n}$, $(q,\xi) \mapsto \big(q,\frac{\partial F}{\partial q} (q,\xi)\big)$. This diffeomorphism induces the bijection between critical points of $F$ and fixed points of $\Phi$ that appears in the proof of Lemma \ref{lemma: discriminant points}. But, if $\Phi$ is Hamiltonian isotopic to the identity (as it is in our case) then the set of fiber critical points of $F$ is diffeomorphic to $\mathbb{R}^{2n} \times \{0\} \subset \mathbb{R}^{2n} \times \mathbb{R}^{2M}$. This implies our claim.

We are now ready to prove that the $2k$-th iteration $\{\varphi_t\}_{t\in[0,1]}$ of the Reeb flow has discriminant length at least $k$. As was proved by Givental \cite{Giv} and Th\'{e}ret \cite{Th2}, we know that $\mu\big([\{\varphi_t\}_{t\in[0,1]}]\big)=4kn$. Let $\{\phi_t\}_{t\in[0,1]}$ be a contact isotopy representing $[\{\varphi_t\}_{t\in[0,1]}]$ which is of the form as described in Lemma \ref{lemma: decomposition} and minimizes the discriminant length. By Lemma \ref{lemma: well defined} we still have $\mu\big([\{\phi_t\}_{t\in[0,1]}]\big)=4kn$, hence if $k\neq0$ then, by the discussion above, there must be a value $t_0\in(0,1]$ such that $\phi_{t_0}$ belongs to the discriminant. Assume that $t_0$ is the smallest value of $t\in(0,1]$ for which $\phi_{t_0}$ belongs to the discriminant. Then, as discussed above, we must have that $\mu\big([\{\phi_t\}_{t\in[0,t_0]}]\big)\leq 2n$. Thus, we have shown that with a single embedded piece we can only get to a $t_0$ with $\mu\big([\{\phi_t\}_{t\in[0,t_0]}]\big)\leq 2n$. Write now $\{\phi_t\}_{t\in[0,1]}$ as the concatenation $\{\phi_t\}_{t\in[0,1]} = \{\phi_t\}_{t\in[0,t_0]} \ast \{\phi_t\}_{t\in[t_0,1]}$. By definition we have that $\mu\big(\{\phi_t\}_{t\in[0,1]}\big) = \mu\big( \{\phi_t\}_{t\in[0,t_0]}\big) + \mu\big(\{\phi_t\}_{t\in[t_0,1]} \big)$ hence $\mu\big(\{\phi_t\}_{t\in[t_0,1]} \big) \geq 2n\,(2k-1)$. For $t\in(t_0,1]$ we are not interested anymore in detecting values of $t$ for which $\phi_t$ is in the discriminant, but instead we want to detect values of $t$ for which $\phi_{t_0}^{\phantom{t_0}-1}\circ\phi_t$ is in the discriminant and thus $\text{gr}(\phi_{t_0})$ and $\text{gr}(\phi_t)$ intersect. Write thus $\{\phi_t\}_{t\in[t_0,1]} = \{\phi_{t_0}\circ(\phi_{t_0}^{\phantom{t_0}-1}\circ\phi_t)\}_{t\in[t_0,1]}$. By Lemma \ref{lemma: quasimorphism} we have that $\mu\big(\{\phi_{t_0}\circ(\phi_{t_0}^{\phantom{t_0}-1}\circ\phi_t)\}_{t\in[t_0,1]} \big) - \mu\big(\{\phi_{t_0}^{\phantom{t_0}-1}\circ\phi_t\}_{t\in[t_0,1]} \big) \leq 2n$, and so $\mu\big(\{\phi_{t_0}^{\phantom{t_0}-1}\circ\phi_
t\}_{t\in[t_0,1]} \big) \geq 2n(2k-1) - 2n = 2n (2k-2)$. If $k>1$ then, by the same argument as before, there must be some $t_1$ in $(t_0,1]$ such that $\phi_{t_0}^{\phantom{t_0}-1}\circ\phi_{t_1}$ belongs to the discriminant, and hence $\text{gr}(\phi_{t_0})$ and $\text{gr}(\phi_{t_1})$ intersect. We continue in this way and get that the discriminant length of $\{\phi_t\}_{t\in[0,1]}$ is at least $k$.

\begin{rmk}
It would be interesting to understand whether this estimate is sharp, i.e. whether there is a contact isotopy of length $k$ in the same homotopy class of the $2k$-th iteration of the Reeb flow.
\end{rmk}

Unboundedness of the discriminant oscillation norm is proved by combining the above argument with the monotonicity of the non-linear Maslov index, which is described in the next lemma.

\begin{lemma}\label{lemma: monotonicity of non-linear Maslov index}
If $\{\phi_t\}$ is a positive (respectively negative) contact isotopy of $\mathbb{R}P^{2n-1}$ then $\mu\big([\{\phi_t\}]\big) \geq 0$ (respectively $\mu\big([\{\phi_t\}]\big) \leq 0$).
\end{lemma}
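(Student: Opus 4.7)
My plan is to exploit the definition $\mu([\{\phi_t\}]) = \operatorname{ind}(F_0) - \operatorname{ind}(F_1)$ from Lemma \ref{lemma: well defined} and show that for a positive contact isotopy the function $t \mapsto \operatorname{ind}(\{f_t \leq 0\})$ is (weakly) non-increasing in $t$, which immediately gives $\mu([\{\phi_t\}]) \geq 0$. The negative case will follow by a symmetric argument (or equivalently by replacing $\{\phi_t\}$ with $\{\phi_{1-t}\circ\phi_1^{-1}\}$).

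The first step is a derivative computation. Fix a smooth $1$-parameter family $F_t \colon \mathbb{R}^{2n}\times\mathbb{R}^{2M} \to \mathbb{R}$ of conical generating functions for the Hamiltonian lift $\{\Phi_t\}$ of $\{\phi_t\}$. The lifted Hamiltonian is $H_t(z) = \tfrac{|z|^2}{2}\,h_t(z/|z|)$, which is non-negative on $\mathbb{R}^{2n}\setminus\{0\}$ as soon as $h_t>0$ on $S^{2n-1}$. A standard computation for generating function families (see e.g.\ the derivation of the Hamilton--Jacobi equation used in the proof of Lemma \ref{lemma: decomposition}) gives that at any fiber critical point $(q,\xi)$ of $F_t$,
\[
\frac{\partial F_t}{\partial t}(q,\xi) \;=\; \pm H_t\bigl(\pi(\Phi_t(q))\bigr),
\]
with a fixed sign depending on conventions. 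In particular, if $\{\phi_t\}$ is positive then the critical values of $F_t$ move monotonically with $t$; after passing to the projectivization, the same is true for the induced critical values of $f_t\colon \mathbb{R}P^{2n+2M-1}\to\mathbb{R}$. Fixing signs so that the $2k$-th Reeb iterate $\{\varphi_t\}$, which is positive, indeed has $\mu = 4kn \geq 0$ (using the Givental--Th\'eret computation invoked in the previous section), we conclude that for a positive isotopy the critical values of $f_t$ are non-decreasing.

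The second step is a monotonicity argument for the sublevel sets $N_t = \{f_t \leq 0\}$. On any subinterval where $0$ is a regular value of $f_t$, the sets $N_t$ are all diffeomorphic via the gradient flow (as used in the discussion preceding Lemma \ref{lemma: quasimorphism}), so $\operatorname{ind}(N_t)$ is locally constant. At an isolated $t_0$ where $0$ is a critical value of $f_{t_0}$, critical points can only leave $N_t$ through the level $\{f_t = 0\}$ from below (since critical values only increase); hence for $\varepsilon > 0$ small the set $N_{t_0+\varepsilon}$ deformation retracts into $N_{t_0-\varepsilon}$, giving the inclusion
\[
\operatorname{ind}(N_{t_0+\varepsilon}) \;\leq\; \operatorname{ind}(N_{t_0-\varepsilon})
\]
by the usual monotonicity of the Fadell--Rabinowitz index under inclusion. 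Chaining these inequalities over $[0,1]$ yields $\operatorname{ind}(F_1) \leq \operatorname{ind}(F_0)$, i.e.\ $\mu([\{\phi_t\}]) \geq 0$. The negative case is entirely parallel, with all inequalities reversed.

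The main obstacle is bookkeeping: one has to fix the sign convention for generating functions (there are several in the literature, and Givental's non-linear Maslov index is sensitive to the choice) so that the derivative $\partial_t F_t$ really does carry the sign of $h_t$ at fiber critical points, and then check that this is consistent with the positivity of $\mu$ on the iterated Reeb flow. A second, mild technical point is that along a generic path only isolated critical levels of $f_t$ cross $0$; for non-generic paths one perturbs $\{\phi_t\}$ within its homotopy class using Lemma \ref{lemma: well defined} so that the crossings at $0$ are isolated, after which the local analysis above applies.
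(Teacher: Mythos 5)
Your route is genuinely different from the paper's, and as written it has a gap at its crucial step. The paper's proof is a one-liner resting on a stronger input: it cites \cite[Lemma 3.6]{mio5} for the fact that a \emph{positive} contact isotopy of $\mathbb{R}P^{2n-1}$ admits a $1$-parameter family of conical generating functions with $\frac{\partial F_t}{\partial t}>0$ \emph{everywhere}, not merely at fiber critical points. Pointwise monotonicity makes the sublevel sets literally nested, $\{f_{t_1}\leq 0\}\subseteq\{f_{t_0}\leq 0\}$ for $t_1\geq t_0$, and monotonicity of the cohomological index under inclusion finishes the proof with no bifurcation analysis at all. You only extract the Hamilton--Jacobi relation at fiber critical points, which is a strictly weaker statement, and you then have to pay for that in your second step.

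That second step is where the gap sits. First, a small but real confusion: for the conical functions $F_t$ every critical value is $0$ (by Euler's relation, as the paper notes in the proof of Lemma \ref{lemma: discriminant points}), so ``the critical values of $F_t$ move monotonically with $t$'' is vacuous; the objects whose critical values you must control are the induced functions $f_t$ on $\mathbb{R}P^{2n+2M-1}$, most of whose critical points are not critical points of $F_t$, and the Hamilton--Jacobi relation only speaks to those that are. More seriously, the assertion that when a critical value of $f_t$ crosses $0$ upward ``the set $N_{t_0+\varepsilon}$ deformation retracts into $N_{t_0-\varepsilon}$'' is exactly the nontrivial content of the lemma, and you give no argument for it: the sets $N_{t_0\pm\varepsilon}$ are not nested as subsets, so one must first produce a map $N_{t_0+\varepsilon}\to N_{t_0-\varepsilon}$ compatible up to homotopy with the inclusions into $\mathbb{R}P^{2n+2M-1}$ before the factorization $i_{N_{t_0+\varepsilon}}^{\phantom{N}\ast}=(\cdot)^{\ast}\circ i_{N_{t_0-\varepsilon}}^{\phantom{N}\ast}$ gives the index inequality. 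That requires the handle/bifurcation analysis of the type carried out in Th\'eret's thesis (the source of Proposition 84 invoked elsewhere in this paper), together with genericity of the crossings, none of which you supply. The clean fix is to use the stronger input the paper uses: for a positive isotopy take an everywhere-increasing family of generating functions, after which the nesting of the sets $N_t$ is immediate and your entire second step becomes unnecessary.
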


\begin{proof}
As it is proved for example in \cite[Lemma 3.6]{mio5}, if $\{\phi_t\}$ is a positive contact isotopy of $\mathbb{R}P^{2n-1}$ then there is a 1-parameter family of generating functions $F_t: \mathbb{R}^{2n} \times \mathbb{R}^{2M} \rightarrow \mathbb{R}$ which is increasing, i.e. $\frac{\partial F_t}{\partial t}(q,\xi) > 0$ for all $(q,\xi)\in \mathbb{R}^{2n} \times \mathbb{R}^{2M}$. Hence $\text{ind}(F_t)$ is decreasing in $t$ and so $\mu\big([\{\phi_t\}]\big) = \text{ind}(F_0) - \text{ind}(F_1)\geq 0$.
\end{proof}

Note that if $\{\phi_t\}$ is a positive loop of contactomorphisms then $\mu\big([\{\phi_t\}]\big) > 0$. On the other hand, if $\{\phi_t\}$ is contractible then $\mu\big([\{\phi_t\}]\big) = \mu\big([\text{id}]\big) = 0$. As noticed by Eliashberg and Polterovich \cite{EP}, this shows that there are no positive contractible loops of contactomorphisms of $\mathbb{R}P^{2n-1}$, i.e. that $\mathbb{R}P^{2n-1}$ is orderable. Hence the discriminant oscillation norm is non-degenerate.

\section{The Legendrian discriminant length}\label{section: Legendrian}

Let $\big(M,\xi=\text{ker}(\alpha)\big)$ be a (cooriented) contact manifold. We will now define the discriminant length of a Legendrian isotopy in $M$. We first give the Legendrian analogue of Lemma \ref{lemma: decomposition}.

\begin{lemma}\label{lemma: decomposition legendrian}
Let $\{L_t\}_{t\in[0,1]}$ be a Legendrian isotopy in $M$. After perturbing $\{L_t\}$ in the same homotopy class with fixed endpoints, there exist an integer $N$ and a subdivision $0=t_0<t_1<\cdots<t_{N-1}<t_N=1$ such that for all $i=0,\cdots,N-1$ the submanifold $\bigcup_{t\in[t_i,t_{i+1}]}L_t$ of $M$ is embedded.
\end{lemma}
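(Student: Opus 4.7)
The plan is to follow the strategy of the proof of Lemma \ref{lemma: decomposition}, adapted to the Legendrian setting: reduce to showing that any sufficiently $\mathcal{C}^1$-small positive (or negative) Legendrian isotopy is automatically embedded, and then handle a general Legendrian isotopy by subdividing the time interval.

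First I would observe the Legendrian analogue of the decomposition trick from Lemma \ref{lemma: decomposition}. For any contact isotopy $\{\varphi_t\}_{t\in[0,1]}$ of $M$ with $\varphi_0 = \text{id}$, we have the tautological identity $L_t = \varphi_t^{\phantom{t}-1}(\varphi_t(L_t))$, so $\{L_t\}$ is in the same homotopy class (with fixed endpoints) as the concatenation of $\{\varphi_t(L_t)\}_{t\in[0,1]}$, which goes from $L_0$ to $\varphi_1(L_1)$, with $\{\varphi_t^{\phantom{t}-1}(\varphi_1(L_1))\}_{t\in[0,1]}$, which goes from $\varphi_1(L_1)$ back to $L_1$. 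Choosing $\{\varphi_t\}$ positive with contact Hamiltonian sufficiently large along the Legendrians $\varphi_t(L_t)$ makes the first piece a positive Legendrian isotopy; the second piece is the action of the negative contact isotopy $\{\varphi_t^{\phantom{t}-1}\}$ on the fixed Legendrian $\varphi_1(L_1)$, hence negative. To ensure that each piece is moreover $\mathcal{C}^1$-small, I would first subdivide $[0,1]$ (by compactness) into finitely many subintervals $0=t_0<\dots<t_N=1$ such that each $\bigcup_{t\in[t_i,t_{i+1}]}L_t$ lies in a Weinstein neighborhood $\mathcal{N}_i$ of $L_{t_i}$, and apply the trick inside each subinterval.

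Second I would prove that any sufficiently $\mathcal{C}^1$-small positive Legendrian isotopy is embedded, by the Weinstein plus Hamilton-Jacobi argument already used in Lemma \ref{lemma: decomposition}. In the chart $\mathcal{N}_i \cong J^1 L_{t_i}$, each small Legendrian $L_t$ is a section of $J^1 L_{t_i}$, necessarily of the form $j^1 f_t$ for a smooth family of functions $f_t$ on $L_{t_i}$. Positivity of the Legendrian isotopy means that an extending contact isotopy has contact Hamiltonian $H_t$ which is positive on $L_t$, so by the Hamilton-Jacobi equation $H_t(j^1 f_t(x)) = \frac{d}{dt}f_t(x)$ we obtain $\frac{d}{dt}f_t(x) > 0$ pointwise. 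Thus $f_t$ is strictly increasing in $t$, the graphs $j^1 f_t$ are pairwise disjoint in the $z$-direction, and $\bigcup_t j^1 f_t$ is embedded; the negative case is analogous.

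The main technical obstacle I anticipate is verifying that $\{\varphi_t(L_t)\}$ is genuinely a positive Legendrian isotopy. The contact Hamiltonian governing the motion of $\varphi_t(L_t)$ is not simply the sum of the Hamiltonian of $\{\varphi_t\}$ and of an extension of $\{L_t\}$; one needs to compute it carefully and check that by choosing the Hamiltonian of $\{\varphi_t\}$ sufficiently large on a neighborhood of $\bigcup_t L_t$, the total Hamiltonian remains positive on each $\varphi_t(L_t)$. Once this is verified, the rest of the argument is a direct Legendrian translation of the proof of Lemma \ref{lemma: decomposition}.
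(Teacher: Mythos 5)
Your proposal matches the paper's proof: the paper extends $\{L_t\}$ to a contact isotopy $\{\phi_t\}$ by the Legendrian isotopy extension theorem, applies the same positive/negative splitting as in Lemma \ref{lemma: decomposition} (your $\{\varphi_t(L_t)\}$ is exactly the paper's $\{\varphi_t\circ\phi_t(L_0)\}$), and reduces to the embeddedness of sufficiently $\mathcal{C}^1$-small monotone Legendrian isotopies via Weinstein's theorem and the Hamilton--Jacobi equation. The only point to adjust is the order of operations: the subdivision into Weinstein-chart-sized pieces should be performed \emph{after} the positive/negative splitting (each monotone piece being cut into $\mathcal{C}^1$-small monotone pieces), since composing with a $\varphi_t$ of large Hamiltonian destroys the smallness you arranged beforehand.
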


\begin{proof}
Let $\{\phi_t\}_{t\in[0,1]}$ be a contact isotopy of $M$ such that $L_t=\phi_t(L_0)$ for all $t$ (such a contact isotopy exists because of the Legendrian isotopy extension theorem, see \cite{gei}). As we saw in the proof of Lemma \ref{lemma: decomposition}, $\{\phi_t\}$ is homotopic with fixed endpoints to the concatenation of $\{\varphi_t\circ\phi_t\}$ and $\{\varphi_t^{\phantom{t}-1}\circ(\varphi_1\circ\phi_1)\}$ for every other contact isotopy $\{\varphi_t\}$. Hence $\{L_t\}$ is homotopic with fixed endpoints to the concatenation of $\{\varphi_t\circ\phi_t(L_0)\}$ and $\{\varphi_t^{\phantom{t}-1}\big(\varphi_1\circ\phi_1(L_0)\big)\}$. If $\{\varphi_t\}$ is generated by a sufficiently big contact Hamiltonian then $\{\varphi_t\circ\phi_t(L_0)\}$ is positive and $\{\varphi_t^{\phantom{t}-1}\big(\varphi_1\circ\phi_1(L_0)\big)\}$ is negative. Thus it is enough to show that if $\{L_t\}_{t\in[0,1]}$ is a positive (or negative) Legendrian isotopy which is sufficiently $\mathcal{C}^1$-small then it is embedded. This can be proved exactly as for Lemma \ref{lemma: decomposition}, by using Weinstein's theorem and the Hamilton-Jacobi equation.
\end{proof}

\begin{defi}
The \textbf{discriminant length} of the homotopy class of a Legendrian isotopy $\{L_t\}$ is the minimal integer $N$ needed to represent it as described in Lemma \ref{lemma: decomposition legendrian}. We also set by definition the discriminant length of the homotopy class of a constant Legendrian isotopy to be zero.
\end{defi}

If $\{L_t\}_{t\in [0,1]}$ is a Legendrian isotopy of $M$ which is already of the form as described in Lemma \ref{lemma: decomposition legendrian}, we will call the \textit{discriminant length} of $\{L_t\}$ the minimal number $N$ for which there exists a subdivision $0=t_0<t_1<\cdots<t_{N-1}<t_N=1$ such that for all $i=0,\cdots,N-1$ the submanifold $\bigcup_{t\in[t_i,t_{i+1}]}L_t$ is embedded. The discriminant length of the homotopy class of $\{L_t\}$ is then the minimal discriminant length of a representative which is of the form as described in Lemma \ref{lemma: decomposition legendrian}.

The zigzag and oscillation lengths of the homotopy class of a Legendrian isotopy are defined by modifying the definition of the discriminant length in the same way as for the case of contact isotopies. As we will now show, the discriminant and zigzag lengths on the universal cover $\widetilde{\mathcal{L}}(M,\xi)$ of the space of Legendrians are equivalent.

\begin{prop}\label{proposition: equivalence legendrian}
For every Legendrian isotopy $\{L_t\}_{t\in[0,1]}$ the zigzag length of its homotopy class is smaller than or equal to twice the discriminant length.
\end{prop}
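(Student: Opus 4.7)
The plan is to reduce the statement to showing that a single embedded Legendrian isotopy $\{L_t\}_{t\in[0,1]}$ has zigzag length at most $2$. Assuming this, if $\{L_t\}$ has discriminant length $N$, one writes it as a concatenation of $N$ embedded pieces (after perturbing in the homotopy class as in Lemma \ref{lemma: decomposition legendrian}); applying the local statement to each piece yields a decomposition into at most $2N$ monotone embedded pieces, which is the desired zigzag bound.

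To prove the local statement, I would begin by using the Legendrian isotopy extension theorem to choose a compactly supported contact isotopy $\{\phi_t\}$ of $M$ with $L_t=\phi_t(L_0)$, and then repeat the trick from the proof of Lemma \ref{lemma: decomposition}. Namely, for any auxiliary contact isotopy $\{\varphi_t\}$, the path $\{\phi_t\}$ is homotopic rel endpoints to the concatenation $\{\varphi_t\circ\phi_t\}_{t\in[0,1]}\ast\{\varphi_{1-s}^{\phantom{1-s}-1}\circ\varphi_1\circ\phi_1\}_{s\in[0,1]}$. Applying this to $L_0$ shows that $\{L_t\}$ is homotopic rel endpoints to the concatenation $\{\varphi_t(L_t)\}_{t\in[0,1]}\ast\{\varphi_{1-s}^{\phantom{1-s}-1}(\varphi_1(L_1))\}_{s\in[0,1]}$. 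Choosing $\{\varphi_t\}$ positive with contact Hamiltonian $K_t$ dominating the Hamiltonian $H_t$ of $\{\phi_t\}$, the Hamiltonian of the composition (computed as $K_t(y)+e^{g_t(\varphi_t^{\phantom{t}-1}(y))}H_t(\varphi_t^{\phantom{t}-1}(y))$) is positive, so the first piece is a positive Legendrian isotopy; the second piece is then negative, being the orbit of the single Legendrian $\varphi_1(L_1)$ under the negative isotopy $\{\varphi_t^{\phantom{t}-1}\}$.

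The remaining and delicate task is to guarantee that both pieces are embedded, and this is exactly where the hypothesis that the original $\{L_t\}$ is embedded enters: since $W=\bigcup_{t\in[0,1]}L_t$ is then a smoothly embedded submanifold of $M$, it admits a tubular neighborhood $U$. I would take $\{\varphi_t\}$ to be the flow of a cut-off Hamiltonian $K$ supported near $U$, modeled on the Hamiltonians $H_{\rho,R,f}$ of Proposition \ref{proposition: euclidean space} and Lemma \ref{lemma: ball}: namely, $K$ should produce a strong ``Reeb-like'' push in a direction positively transverse to $\xi$, by an amount exceeding the Reeb-extent of $W$ inside $U$. For such $K$ with large enough magnitude, the orbits $\{\varphi_t(L_0)\}$ and $\{\varphi_t(L_1)\}$ are embedded Legendrian isotopies (for short time this is automatic, and the strong domination removes any return), and by continuity $\{\varphi_t(L_t)\}$ stays close to $\{\varphi_t(L_0)\}$ and remains embedded; the second piece, being a time-reversed orbit of $\varphi_1(L_1)$ under the same flow, is embedded for the same reason.

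The main obstacle is the construction of $\{\varphi_t\}$ on a general contact manifold $(M,\xi)$: unlike the situations of Proposition \ref{proposition: euclidean space} and Lemma \ref{lemma: ball}, one does not have a global Darboux chart, and the Reeb flow itself may fail to have embedded orbits (for instance in the presence of periodic Reeb orbits). This is circumvented by working purely locally in a Weinstein-type neighborhood of $W$ and imposing a cut-off in the transverse Reeb direction, exactly as in the construction of $H_{\rho,R,f}$ in the proof of Proposition \ref{proposition: euclidean space}. The embeddedness of $\{L_t\}$ is essential here, because it is what guarantees the existence of a single tubular neighborhood $U$ in which both endpoints $L_0$ and $L_1$, and all intermediate Legendrians $L_t$, fit simultaneously, so that one Hamiltonian $K$ serves for the whole construction.
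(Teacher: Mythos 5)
Your reduction to the local statement (a single embedded Legendrian isotopy has zigzag length at most $2$) is exactly the paper's first step, and the two candidate pieces $\{\varphi_t(L_t)\}$ and $\{\varphi_{1-s}^{\phantom{s}-1}(\varphi_1(L_1))\}$ are the natural first attempt. The gap is in the embeddedness of the first piece. Embeddedness of $\bigcup_t\varphi_t(L_t)$ requires $\varphi_s(L_s)\cap\varphi_t(L_t)=\emptyset$ for all $s\neq t$, a condition that mixes the two isotopies; it does not follow ``by continuity'' from embeddedness of $\bigcup_t\varphi_t(L_0)$, because $L_t$ is in general nowhere near $L_0$ (the given isotopy is only assumed embedded, not $\mathcal{C}^1$-small, and no reduction to the small case has been made here, unlike in Lemma \ref{lemma: decomposition}). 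The auxiliary flow you would need --- positive, supported near a tubular neighborhood $U$ of $W=\bigcup_tL_t$, with embedded time-$1$ orbits through all of $W$, and displacing by more than the ``extent'' of $W$ --- is produced in Proposition \ref{proposition: euclidean space} only by exploiting the global, non-recurrent coordinate $z$ of $\mathbb{R}^{2n+1}$, where the cut-off can be placed far away in the $z$-direction so that no point of the support ever reaches it. On a general $(M,\xi)$ there is no such coordinate and the Reeb flow may be periodic; a positive Hamiltonian supported near $U$ must freeze each point once it exits the support, and a frozen point immediately forces $\varphi_s(L_0)\cap\varphi_t(L_0)\neq\emptyset$, destroying embeddedness. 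If your scheme worked as stated it would apply essentially verbatim to the graphs $\bigcup_t\text{gr}(\phi_t)\subset M\times M\times\mathbb{R}$ and settle the contactomorphism version of this proposition, which the authors explicitly leave open.

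The paper's proof uses embeddedness of $\{L_t\}$ in a different and essential way, through a one-parameter version of the Legendrian isotopy extension theorem: there is an ambient contactomorphism $\phi$, fixing $L_0$, with $\phi(L_t)=L_{t/\epsilon}$ for $t\in[0,\epsilon]$, so $\phi$ carries a short initial segment of the isotopy onto all of it. One then takes a short Reeb push $\{L'_t\}_{t\in[0,\delta]}$ of $L_0$, which is positive and embedded for trivial reasons and, for $\delta$ and $\epsilon$ small, has endpoint disjoint from $\bigcup_{t\leq\epsilon}L_t$; applying $\phi$ yields a positive embedded piece whose endpoint avoids all of $W$. The negative piece is the same short push transported by $\psi_1\circ\phi$ (where $\psi_t(L_0)=L_t$ and $\psi_1$ fixes $\phi(L'_\delta)$), run backwards. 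All embeddedness claims thus reduce to that of a single short Reeb push moved around by fixed contactomorphisms; no large flow with controlled orbits is ever constructed. This compression trick is the idea missing from your argument, and it is precisely the step that has no analogue for contact isotopies.
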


\begin{proof}
It is enough to show that if $\{L_t\}_{t\in[0,1]}$ is an embedded Legendrian isotopy then we can deform it (in the same homotopy class with fixed endpoints) into an embedded zigzag, i.e. into the concatenation of a positive and a negative embedded Legendrian isotopies. The construction of the embedded zigzag goes as follows. Consider first a positive Legendrian isotopy $\{L'_t\}_{t\in[0,\delta]}$ obtained by pushing $L_0$ by the Reeb flow for time $t\in[0,\delta]$. Then $L'_0=L_0$ and, for $\delta$ small enough, $\{L'_t\}_{t\in[0,\delta]}$ is embedded. Note that if $\epsilon$ is small enough then $\{L_t\}_{t\in[0,\epsilon]}$ does not intersect $L'_{\delta}$. Take now a contactomorphism $\phi$ of $M$ such that for all $t\in[0,\epsilon]$ we have $\phi(L_t) = L_{\frac{t}{\epsilon}}$. Hence, $\phi$ sends the whole isotopy $\{L_t\}_{t\in[0,\epsilon]}$ to $\{L_t\}_{t\in[0,1]}$ without moving $L_0$. Such a contactomorphism $\phi$ can be found by applying to the family of Legendrian isotopies $\{L_t\}_{t\in[0,s]}$ for $s\in[\epsilon,1]$ a 1-parameter version of the Legendrian isotopy extension theorem: given a family of embedded Legendrian isotopies $\{L_t^{\phantom{t}s}\}_{t\in[0,1]}$ for $s\in[0,1]$, there is a contact isotopy $\{\phi_s\}$ of $M$ such that, for all $s$, $\phi_s(L_t^{\phantom{t}0}) = L_t^{\phantom{t}s}$ for all $t$. We take the positive embedded Legendrian isotopy $\{\phi(L'_t)\}_{t\in[0,\delta]}$ to be the first part of our zigzag. Note that the endpoint $\phi(L'_{\delta})$ does not intersect $\bigcup_{t\in[0,1]}L_t$. Indeed, $\bigcup_{t\in[0,1]}L_t = \phi\big( \bigcup_{t\in[0,\epsilon]}L_t\big)$ and we know that $L'_{\delta}$ does not intersect $\bigcup_{t\in[0,\epsilon]}L_t$. Consider now a contact isotopy $\psi_t$ of $M$ such that $\psi_t(L_0)=L_t$ for all $t\in[0,1]$. Since $\phi(L'_{\delta})$ does not intersect $\bigcup_{t\in[0,1]}L_t$ we can assume that $\phi(L'_{\delta})$ does not belong to the support of the isotopy $\{\psi_t\}$. The inverse of $\{\psi_1\big(\phi(L'_t)\big)\}_{t\in[0,\delta]}$ is a negative embedded Legendrian isotopy connecting $\phi_1(L'_{\delta})$ to $L_1$. It is the second part of our zigzag. The zigzag we just constructed is in the same homotopy class as the initial $\{L_t\}$. An explicit deformation between the two is given as follows. First keep the first piece $\{\phi(L'_t)\}$ fixed, and deform the second piece with a parameter $s$ decreasing from $1$ to $0$ by replacing it at time $s$ by the inverse of $\psi_s\big(\phi(L'_t)\big)$ followed by $\{L_t\}_{t\in[s,1]}$. When $s=1$ we obtain thus the concatenation of $\{\phi(L'_t)\}$, its inverse, and $\{L_t\}_{t\in[0,1]}$. We can then homotope the concatenation of $\{\phi(L'_t)\}$ and its inverse to the constant isotopy $\{L_0\}$.
\end{proof}

Let $\{\phi_t\}_{t\in[0,1]}$ be a contact isotopy of $M$ which is of the form as described in Lemma \ref{lemma: decomposition}. Then its discriminant length is equal to the discriminant length of the Legendrian isotopy $\{\text{gr}(\phi_t)\}_{t\in[0,1]}$ of $M\times M\times\mathbb{R}$. Note however that the discriminant length of the homotopy class of $\{\phi_t\}$ is not necessarily equal to Legendrian discriminant length  of the homotopy class of $\{\text{gr}(\phi_t)\}$ since there could be a shorter Legendrian isotopy which is homotopic to $\{\text{gr}(\phi_t)\}$ but which is not the graph of a contact isotopy, or is not the graph of a contact isotopy which is homotopic to $\{\phi_t\}$. Thus the map
$$
j:\,\widetilde{\text{Cont}_0}(M,\xi) \rightarrow\,\widetilde{\mathcal{L}_{\Delta}}(M\times M \times\mathbb{R})
$$
$$
[\{\phi_t\}] \mapsto [\{\text{gr}(\phi_t)\}]
$$
a priori does not necessarily preserve the discriminant length. It seems plausible that there might be examples of contact manifolds $M$ for which $j$ does indeed not preserve the discriminant length. Note that this question was studied by Ostrover \cite{O} in the context of Hofer geometry. Ostrover proved that for any closed symplectic manifold $W$ with $\pi_2(W)=0$ the map $j: \phi \mapsto \text{gr}(\phi)$ from $\text{Ham}(W)$ to the space $\mathcal{L}_{\Delta}(W\times W)$ of all Lagrangians that are exact Lagrangian isotopic to the diagonal does not preserve the Hofer distance. Moreover he showed that the image of $\text{Ham}(W)$ inside $\mathcal{L}_{\Delta}(W\times W)$ is \textquotedblleft strongly distorted\textquotedblright. It would be interesting to understand if a similar phenomenon also appears in some cases  for the discriminant (zigzag, oscillation) metric.

In the rest of this section we will prove unboundedness of the Legendrian discriminant length in three special cases. The arguments can be adapted as in Sections \ref{section: RxS} and \ref{section: projective space} to show unboundedness also for the oscillation length.

\subsection{Unboundedness of the Legendrian discriminant length for $T^{\ast}B\times S^1$}\label{subsection: TBxS}

We will show that the Legendrian discriminant length is unbounded in $T^{\ast}B\times S^1$, for every smooth closed manifold $B$. More precisely we will show that the Legendrian isotopy of $T^{\ast}B\times S^1$ given by the image of the 0-section by the $k$-th iteration of the Reeb flow has discriminant length equal to $k$. Recall that the Reeb flow on $T^{\ast}B\times S^1$ is given by rotation in the $S^1$-direction.

Note that every Legendrian isotopy $\{L_t\}$ of $T^{\ast}B\times S^1$ can be uniquely lifted (once we choose a starting point) to a Legendrian isotopy $\{\widetilde{L_t}\}$ of $J^1B=T^{\ast}B\times \mathbb{R}$. For example, for the Legendrian isotopy given by the image of the 0-section by the $k$-th iteration of the Reeb flow we will consider the lift $\widetilde{L_t} = 0_B \times \{kt\}$. To calculate the discriminant length of $\{L_t\}$ we will use the spectral invariants for Legendrian submanifolds of $J^1B$, and an argument similar to the one we gave in Section \ref{section: RxS} to show that the discriminant metric in $\widetilde{\text{Cont}^{\phantom{0}c}_0}(\mathbb{R}^{2n}\times S^1)$ is unbounded.

Recall first that for every Legendrian submanifold $L$ of $J^1B$ which is isotopic to the 0-section, and for any cohomology class $u\in H^{\ast}(B)$ we can define a spectral number $c(u,L)\in\mathbb{R}$ by applying a minimax method to a generating function quadratic at infinity for $L$.  We will use the following properties (see \cite{mio1,mio2}) of these spectral numbers.

\begin{lemma}\label{lemma: properties spectral invariants legendrians} The spectral numbers $c(u,L)$ for Legendrian submanifolds $L$ of $J^1B$ satisfy the following properties.
\begin{enumerate}
\renewcommand{\labelenumi}{(\roman{enumi})}
\item For any $u\in H^{\ast}(B)$, a Legendrian submanifold $L$ which is isotopic to the 0-section intersects the 0-wall at a point of the form $\big(q,0,c(u,L)\big)$ for some $q\in B$. As a consequence, for every $u\in H^{\ast}(B)$ and $\lambda\in\mathbb{R}$ we have that  $c\big(u,0_B\times\{\lambda\}\big) = \lambda$.
\item $c(u\cup v, L_1+L_2)\geq c(u,L_1)+c(v,L_2)$, where $L_1+L_2$ is defined by\footnote{Although $L_1 + L_2$ is not necessarily a submanifold one can still define $c(u\cup v, L_1+L_2)$, see for example the comment in \cite[Lemma 2.1]{mio2}.}
$$ L_1+L_2:=\{\;(q,p,z)\in J^1B \;|\; p=p_1+p_2, \; z=z_1+z_2, \;(q,p_1,z_1)\in L_1, \; (q,p_2,z_2)\in L_2  \;\}.$$
\item $c(\mu,\overline{L})=-c(1,L)$ where $\mu$ and $1$ denote respectively the volume and unit class in $H^{\ast}(B)$ and where $\overline{L}$ denotes the image of $L$ under the map $J^1B\rightarrow J^1B$, $(q,p,z)\mapsto(q,-p,-z)$.
\item For any Legendrian isotopy $\{L_t\}$, $c(u,L_t)$ is continuous in $t$.
\item If $\Psi$ is a contactomorphism of $J^1B$ which is 1-periodic in the $\mathbb{R}$-coordinate and is isotopic to the identity through 1-periodic contactomorphisms then $$\lceil c(u,\Psi(L))\rceil=\lceil c\big(u,L-\Psi^{-1}(0_B)\big)\rceil.$$
\item If $\{L_t\}$ is a positive (respectively negative) Legendrian isotopy then the function $t \mapsto c(u,L_t)$ is increasing (respectively decreasing).
\end{enumerate}
\end{lemma}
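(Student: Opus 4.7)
The plan is to derive all six properties from standard minimax arguments applied to a generating function quadratic at infinity (GFQI) for $L$. Recall that given such a GFQI $F: B \times \mathbb{R}^N \rightarrow \mathbb{R}$, the spectral number $c(u,L)$ is defined by a minimax procedure using a class Poincar\'e dual to $u$, taken over relative sublevel sets of the pair $(E^{\lambda}, E^{-\infty})$, where $E^{\lambda}=\{F\leq\lambda\}$. Crucially, $c(u,L)$ is always a critical value of $F$, and critical points of $F$ correspond bijectively to intersections of $L$ with the $0$-wall $\{p=0\}$, with critical value equal to the $z$-coordinate of the intersection point. Property (i) is then immediate: the critical point realizing $c(u,L)$ provides the desired intersection point, and for $L=0_B\times\{\lambda\}$ the function $F(q,\xi)=\lambda+Q(\xi)$ with $Q$ a non-degenerate quadratic form is a GFQI whose unique critical value is $\lambda$, so $c(u,L)=\lambda$ for every $u$.

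For (ii), if $F_1, F_2$ are GFQIs for $L_1, L_2$, then the fiberwise sum $(q,\xi_1,\xi_2)\mapsto F_1(q,\xi_1)+F_2(q,\xi_2)$ is a GFQI (in a generalized sense when $L_1+L_2$ is singular) for $L_1+L_2$. The inequality $c(u\cup v, L_1+L_2)\geq c(u,L_1)+c(v,L_2)$ is then the standard cup-product subadditivity of minimax values, going back to Viterbo and adapted to the Legendrian setting in earlier work of Traynor, Bhupal, and the second author. Property (iii) follows from the observation that $-F$ is a GFQI for $\overline{L}$, combined with Poincar\'e--Lefschetz duality relating sublevel sets of $F$ and $-F$. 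Continuity (iv) follows from the fact that GFQIs can be chosen continuously along any Legendrian isotopy and that minimax values depend continuously on the function.

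For property (v), the hypothesis that $\Psi$ is $1$-periodic in the $\mathbb{R}$-coordinate and isotopic to the identity through $1$-periodic contactomorphisms implies that $\Psi(L)$ and $L-\Psi^{-1}(0_B)$ coincide up to a translation in the $\mathbb{R}$-coordinate by an integer amount; this integer shift comes from the wrapping number of the image of the $0$-section along a lifted isotopy of $\Psi$, and it is eliminated by taking the ceiling. For property (vi), given a positive (resp.\ negative) Legendrian isotopy $\{L_t\}$ one can construct a $1$-parameter family of GFQIs $F_t$ which is pointwise strictly increasing (resp.\ decreasing) in $t$; then the sublevel sets $\{F_t\leq\lambda\}$ are nested in $t$ and the minimax values $c(u,L_t)$ inherit the corresponding monotonicity.

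The two most delicate points are property (v), where one must carefully track how the choice of lift of $\Psi$ affects the spectral numbers so as to verify that the ceiling identity holds on the nose, and property (ii), where $L_1+L_2$ need not be a smooth Legendrian submanifold and so the spectral invariant must first be extended to such singular objects by means of the fiberwise direct sum of GFQIs described above. Neither of these presents a serious obstacle given the existing generating function technology recalled in \cite{mio1,mio2}.
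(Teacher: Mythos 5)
The paper itself offers no proof of this lemma: it is stated as a recollection of results from \cite{mio1,mio2} (extended to general $B$ in \cite{Z}), so what you have written is a reconstruction of the arguments in those references. Your treatment of (i)--(iv) and (vi) is essentially the standard one and matches what is done there: critical values of a GFQI are $z$-coordinates of intersections with the $0$-wall, fiberwise sum of GFQIs generates $L_1+L_2$ and gives the cup-product inequality, $-F$ generates $\overline{L}$ and duality gives (iii), and a monotone family of GFQIs along a positive isotopy gives (vi).

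The gap is in property (v), which is the deepest item and the one on which Section \ref{subsection: TBxS} actually leans. Your justification --- that $\Psi(L)$ and $L-\Psi^{-1}(0_B)$ ``coincide up to a translation in the $\mathbb{R}$-coordinate by an integer amount'' --- is false. The two objects are of a completely different nature: $\Psi(L)$ is a smooth Legendrian submanifold, whereas $L-\Psi^{-1}(0_B)$ is a fiberwise sum which, as the footnote to (ii) already warns, is in general not even a submanifold; no translation can identify them. Moreover, if your claim were true, a continuity argument along the isotopy $\Psi_s$ from the identity to $\Psi$ (using $\Psi_0^{-1}(0_B)=0_B$ and $L-0_B=L$) would force the integer shift to be $0$ and would yield $c(u,\Psi(L))=c\big(u,L-\Psi^{-1}(0_B)\big)$ on the nose --- but the identity only holds after taking ceilings, and the ceiling is essential (it is exactly why the resulting invariants are integer-valued rather than real-valued). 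The actual proof in \cite{mio1,mio2} goes through the composition formula for generating functions: the GFQI of $\Psi(L)$ agrees with the fiberwise sum of GFQIs of $L$ and $\overline{\Psi^{-1}(0_B)}$ only on a subspace of controlled codimension, which gives inequalities between the two spectral numbers; these are upgraded to equality of integer parts by exploiting the $1$-periodicity of $\Psi$ (the two families $s\mapsto c(u,\Psi_s(L))$ and $s\mapsto c\big(u,L-\Psi_s^{-1}(0_B)\big)$ can only cross integer levels when the corresponding objects meet $0_B\times\mathbb{Z}$, and these crossings are governed by the same intersection data). As written, your argument for (v) would not survive being made precise, and since (v) is the key input for the unboundedness argument on $T^{\ast}B\times S^1$, this is a genuine gap rather than a cosmetic one.
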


We will write $c^+(L):=c(\mu,L)$ and $c^-(L):=c(1,L)$.

Let now $\{L_t\}_{t\in[0,1]}$ be the Legendrian isotopy of $T^{\ast}B\times S^1$ which is the image of the 0-section by the $k$-th iteration of the Reeb flow. Consider the lift given by $\widetilde{L_t} = 0_B \times \{kt\}$ for $t\in[0,1]$. We will show that the discriminant length of $\{L_t\}_{t\in[0,1]}$ is equal to $k$.

Note first that the discriminant length of $\{L_t\}_{t\in[0,1]}$ cannot be $1$. Indeed, since $c^+(\widetilde{L_0})=0$ and $c^+(\widetilde{L_1})=k$, by Lemma \ref{lemma: properties spectral invariants legendrians}(iv) there must be a $t_0\in[0,1]$ such that $c^+(\widetilde{L_{t_0}})=1$. But then Lemma \ref{lemma: properties spectral invariants legendrians}(i) implies that $L_{t_0}$ must intersect the 0-section, and hence $\{L_t\}_{t\in[0,1]}$ is not embedded. Thus, with a first embedded piece we can at most reach a $t_0$ with $\lceil c^+(L_{t_0})\rceil = 1$.  We will now show that with another embedded piece we cannot get any further than getting at most $c^+(\widetilde{L_{t}})=2$. Indeed, assume that with a second embedded piece we can get to a $t>t_0$ with $\lceil c^+(\widetilde{L_{t}})\rceil=3$. We claim that $\lceil c^+(\widetilde{L_{t}})\rceil \leq \lceil c^+(\widetilde{L_{t_0}})\rceil + \lceil c^+\big(\tilde{\Psi}_{t_0}^{\phantom{t}-1}(\widetilde{L_{t}})\big)\rceil$ where $\tilde{\Psi}_{t_0}$ is the lift to $J^1B$ of a contactomorphism of $T^{\ast}B\times S^1$ that sends the 0-section to $L_{t_0}$. Indeed, by Lemma \ref{lemma: properties spectral invariants legendrians}(v) we have $\lceil c^+\big(\tilde{\Psi}_{t_0}^{\phantom{t}-1}(\widetilde{L_{t}})\big)\rceil = \lceil c^+\big(\widetilde{L_{t}} - \tilde{\Psi}_{t_0}(0_B)\big)\rceil$. But, by Lemma \ref{lemma: properties spectral invariants legendrians}(ii) and (iii),
$$c^+\big(\widetilde{L_{t}} - \tilde{\Psi}_{t_0}(0_B)\big) = c^+\big(\widetilde{L_{t}} - \widetilde{L_{t_0}}\big) = c\big(\mu\cup 1, \widetilde{L_{t}} + \overline{\tilde{L_{t_0}}}\big) \geq c(\mu, \widetilde{L_{t}}) + c(1,\overline{\tilde{L_{t_0}}}) = c^+(\widetilde{L_{t}}) - c^+(\widetilde{L_{t_0}})$$
and so $\lceil c^+\big(\tilde{\Psi}_{t_0}^{\phantom{t}-1}(\widetilde{L_{t}})\big)\rceil \geq \lceil c^+(\widetilde{L_{t}})\rceil - \lceil c^+(\widetilde{L_{t_0}})\rceil$ as we wanted. Thus we get $\lceil c^+\big(\tilde{\Psi}_{t_0}^{\phantom{t}-1}(\widetilde{L_{t}})\big)\rceil \geq 2$ and so we see that there must be a $t_1$ in $[t_0,t]$ for which $\lceil c^+\big(\tilde{\Psi}_{t_0}^{\phantom{t}-1}(\widetilde{L_{t_1}})\big)\rceil =1$. We then have that $\Psi_{t_0}^{\phantom{t}-1}(L_{t_1})$ intersects the 0-section. But this is equivalent to saying that $L_{t_1}$ intersects $L_{t_0}$, which is a contradiction. Continuing with this argument we see that the discriminant length of $\{L_t\}_{t\in[0,1]}$ is equal to $k$.

\subsection{Unboundedness of the Legendrian discriminant length for $\mathbb{R}P^{2n-1}$}\label{subsection: projective space}

Using the same techniques as in Section \ref{section: projective space} we can also prove that the Legendrian discriminant length in $\mathbb{R}P^{2n-1}$ is unbounded. We see $\mathbb{R}P^{2n-1}$ as the projectivisation of $\mathbb{R}^{2n}$, and denote by $\pi: \mathbb{R}^{2n}\setminus\{0\}\rightarrow\mathbb{R}P^{2n-1}$ the projection. If $L$ is a Legendrian submanifold of $\mathbb{R}P^{2n-1}$ then $\widetilde{L}:= \pi^{-1}(L)$ is a (conical) Lagrangian submanifold of $\mathbb{R}^{2n}$. Note that we can identify $\mathbb{R}^{2n}$ with $T^{\ast}\mathbb{R}^{n}$, by regarding the first $n$ components of $\mathbb{R}^{2n}$ as the 0-section of $T^{\ast}\mathbb{R}^{n}$. In this way we can associate to Legendrian submanifolds of $\mathbb{R}P^{2n-1}$ the generating function of their lift to $\mathbb{R}^{2n}$, seen as a Lagrangian submanifold of $T^{\ast}\mathbb{R}^{n}$. More precisely, consider a Legendrian isotopy $\{L_t\}_{t\in[0,1]}$ in $\mathbb{R}P^{2n-1}$, starting at the Legendrian submanifold $L_0$ of $\mathbb{R}P^{2n-1}$ that corresponds to the 0-section of $T^{\ast}\mathbb{R}^{n}$. Then its lift $\widetilde{L_t}$ to $\mathbb{R}^{2n}\equiv T^{\ast}\mathbb{R}^{n}$ has a 1-parameter family of conical generating functions $F_t: \mathbb{R}^n\times\mathbb{R}^N\rightarrow\mathbb{R}$, $t\in[0,1]$. Being conical, these functions are determined by the induced functions $f_t: \mathbb{R}P^{n+N-1}\rightarrow\mathbb{R}$, $t\in[0,1]$. Note that critical points of $f_t$ with critical value $0$ correspond to intersections of $L_t$ with $L_0$. As in Section \ref{section: projective space}, we can study the discriminant length of a Legendrian isotopy $\{L_t\}_{t\in[0,1]}$ by looking at the variation of the non-linear Maslov index $\mu([L_t]):= \text{ind}(F_0) - \text{ind}(F_1)$. The key Lemma \ref{lemma: quasimorphism} holds (with the same proof) also in this context: if $\{L_t\}_{t\in[0,1]}$ is a Legendrian isotopy and $\phi$ a contactomorphism then $|\,\mu\big([\phi(L_t)]\big) - \mu([L_t])\,| \leq n$. Consider now the Legendrian isotopy $\{L_t=\varphi_t(L_0)\}$ where $L_0$ is the Legendrian submanifold of $\mathbb{R}P^{2n-1}$ corresponding to the 0-section of $T^{\ast}\mathbb{R}^{n}$ and where $\{\varphi_t\}$ is the $k$-th iteration of the Reeb flow. Then $\mu([L_t]) = 2nk$ (see Givental \cite{Giv}). Arguing along the same lines as in Section \ref{section: projective space} we see thus that the discriminant length of $L_t$ is at least $k$. Note that monotonicity of the non-linear Maslov index also holds for Legendrian isotopies, and can be used to prove that the Legendrian oscillation length is also unbounded.

\subsection{Other examples}\label{subsection: other examples}

In the case where $B=S^1$, the contact manifold $(T^* B\times S^1 ,\ker (dz-pdq ) )$, $(q,p,z)\in \R / \Z \times \R \times \R /\Z$ is $$(T^2 \times (-\pi /2 ,\pi /2 ) ,\zeta =\ker (\sin \theta dx +\cos \theta dy ))$$
where $(x,y,\theta)\in \R /\Z \times \R /\Z \times (-\pi /2 ,\pi /2 )$, {\it via} the change of variables:
$$q=-x, \quad p=\tan \theta , \quad z=y.$$ By the discussion in \ref{subsection: TBxS} we know that  the homotopy class of the path given by $L_t =\{ (x,tk,0)\}$ for $t\in [0,1]$ is of discriminant length equal to $k$.

One can use this result to obtain Legendrian paths of arbitrary discriminant (or oscillation) length in various examples of closed $3$-manifolds. Here is one of them.

Let $\Sigma$ be a closed oriented surface of genus $g\geq 1$ and $M$ be a $S^1$-fibration $\pi :M\to \Sigma$ over $\Sigma$, together with a contact structure $\xi$ that is $S^1$-invariant and transverse to the fibers. For any embedded curve $\gamma \subset \Sigma$, the incompressible torus $T_\gamma  =\pi^{-1} (\gamma )$ has a non singular characteristic foliation $\xi T_\gamma$. This foliation is linear and its slope depends continuously on the curve $\gamma$. By the contact condition, if we move $\gamma$ in one direction the slope changes and thus, by moving $\gamma$ a little bit, one can assume  that $\xi T_\gamma$ is a foliation by Legendrian circles $(L_s )_{s\in \R /\Z}$.

\begin{prop} For all $k\in \N$, the discriminant (and oscillation) length of the homotopy class of the Legendrian isotopy $(L_s )_{s\in [0,k]}$ is at least $k$.
\end{prop}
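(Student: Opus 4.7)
The plan is to reduce the proposition to the case of $T^*S^1\times S^1$ treated in Subsection~\ref{subsection: TBxS}. First, since $\xi$ is $S^1$-invariant and transverse to the fibers of $\pi$, standard results on $S^1$-invariant contact structures on circle bundles (or the normal form for pre-Lagrangian tori with linear characteristic foliation) produce a tubular neighborhood $U$ of $T_\gamma$ in $M$ and a contactomorphism identifying $U$ with $(T^2\times (-\epsilon ,\epsilon ),\ker (\sin \theta \, dx +\cos \theta \, dy ))$, sending $T_\gamma$ to $T^2\times\{0\}$ and the Legendrian foliation $(L_s)_{s\in\R/\Z}$ to $\{(x,s,0):x\in S^1\}_{s\in\R/\Z}$. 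Composed with the change of variables $q=-x$, $p=\tan\theta$, $z=y$ from the beginning of Subsection~\ref{subsection: TBxS}, this yields a contact embedding $\Psi:U\hookrightarrow T^*S^1\times S^1$ under which $(L_s)_{s\in[0,k]}$ becomes precisely the $k$-th iteration of the Reeb flow applied to the $0$-section.

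The second step is to transfer the lower bound from Subsection~\ref{subsection: TBxS} to $M$. The argument there relies on the spectral invariants $c^+(\widetilde{L})$ for Legendrian submanifolds of $J^1 S^1$, together with their continuity along Legendrian isotopies and the sub-additivity $c^+(L_1+L_2)\geq c^+(L_1)+c^-(L_2)$ from Lemma~\ref{lemma: properties spectral invariants legendrians}. I would extend $c^+$ to all Legendrians $L'$ of $M$ that are Legendrian-isotopic to $L_0$ by picking a contactomorphism $\Phi$ of $M$ (isotopic to the identity) bringing $L'$ into $U$ and setting $c^+(L'):= c^+(\Psi(\Phi(L')))$, combined with a fixed choice of lift to $J^1 S^1$ along the isotopy. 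Using property (v) of Lemma~\ref{lemma: properties spectral invariants legendrians} one checks that this is well-defined up to an integer ambiguity that is determined by the homotopy class and is therefore common to all Legendrians along any continuous isotopy.

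Given these ingredients, the argument of Subsection~\ref{subsection: TBxS} applies verbatim: if $(L'_s)_{s\in[0,k]}$ is any perturbation of $(L_s)$ in the same homotopy class with fixed endpoints decomposed as a concatenation of $N$ embedded pieces, then continuity forces $\lceil c^+\rceil$ to jump by at most $1$ across each embedded piece, while the fixed endpoints impose $c^+(L'_0)=0$ and $c^+(L'_k)=k$; hence $N\geq k$. The oscillation statement follows by using in addition the monotonicity property (vi) of Lemma~\ref{lemma: properties spectral invariants legendrians} to count positive and negative pieces separately.

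The main obstacle is to rigorously extend the spectral invariant $c^+$ from the local model $\Psi(U)$ to the global Legendrian isotopy class in $M$, including a careful check that the extension is independent of the auxiliary contactomorphism $\Phi$ up to the expected integer shift, and that it remains continuous along isotopies that leave $U$. Here the incompressibility of $T_\gamma$ (which is guaranteed by $g\geq 1$, allowing us to take $\gamma$ essential) plays an essential role: it ensures that the homotopy class of $(L_s)_{s\in[0,k]}$ in the space of Legendrians of $M$ detects the same integer winding as $c^+$ measures in the local model.
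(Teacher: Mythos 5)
There is a genuine gap in your second step. Your first step (identifying a neighborhood of $T_\gamma$ with the model $(T^2\times(-\pi/2,\pi/2),\zeta)\cong T^*S^1\times S^1$ so that $(L_s)$ becomes the Reeb translate of the $0$-section) is in the right spirit, but the competitors you must rule out are arbitrary representatives $(L'_s)$ of the fixed homotopy class, and these can leave the tubular neighborhood $U$ entirely. Your proposed remedy --- defining $c^+(L'):=c^+\bigl(\Psi(\Phi(L'))\bigr)$ for an auxiliary contactomorphism $\Phi$ of $M$ pushing $L'$ back into $U$ --- does not work as stated: the spectral invariants of Lemma \ref{lemma: properties spectral invariants legendrians} are not invariant under arbitrary contactomorphisms, and property (v), which you invoke for well-definedness, only concerns contactomorphisms of $J^1B$ that are $1$-periodic in the $\mathbb{R}$-coordinate and isotopic to the identity through such maps. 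Nothing controls the dependence on $\Phi$, not even up to an integer shift, and the continuity of $s\mapsto c^+(L'_s)$ along an isotopy that exits $U$ (which would require a continuous family $\Phi_s$ and a comparison of the resulting values) is exactly the point that needs proof and is not supplied. The appeal to incompressibility at the end is a heuristic, not an argument.

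The paper circumvents this by working globally rather than locally: it passes to the cover $\widehat{M}\to M$ associated to the subgroup $\pi_1(T_\gamma)\subset\pi_1(M)$ (this is where incompressibility, i.e. $g\geq 1$ and $\gamma$ essential, is genuinely used). This cover is diffeomorphic to $T^2\times\mathbb{R}$, and the \emph{entire} contact manifold $(\widehat{M},\widehat{\xi})$ injectively immerses into $(T^2\times(-\pi/2,\pi/2),\zeta)$, taking $T_\gamma$ to $T^2\times\{0\}$. Any isotopy of $L_0$ in $M$ lifts to $\widehat{M}$ with the covering projection injective on the image of the lift, so embedded pieces downstairs stay embedded upstairs, and hence in the model; the lower bound then follows verbatim from the $T^*S^1\times S^1$ computation of Subsection \ref{subsection: TBxS}, with no need to extend $c^+$ beyond a jet-space model. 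If you want to salvage your approach, you would have to replace your local chart $U$ by such a global (immersed) chart containing every competing isotopy --- which is precisely the covering-space construction.
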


\begin{proof}  We consider the infinite cover $\widehat{M}$ of group $\pi_1 (T_\gamma )$ of $M$. 
It is the pull-back $\widehat{\pi} :\widehat{M} \rightarrow \widehat{\Sigma}$ of the fibration $\pi$ along an infinite cover $\widehat{\Sigma} \rightarrow \Sigma$, where $\widehat{\Sigma}$ is diffeomorphic to $\gamma \times \R$. The manifold $\widehat{M}$ is thus diffeomorphic to $T^2 \times \R$, with $T_\gamma \simeq T^2 \times \{0 \}$. The pullback $\widehat{\xi}$ of $\xi$ in $\widehat{M}$ is a connection for the $S^1$-fibration $\widehat{M} \rightarrow \widehat{\Sigma}$.
Taking coordinates in the base $\widehat{\Sigma}$ extended to $\widehat{M}$ by coordinates in the $S^1$-fiber direction, we easily get that $(\widehat{M} ,\widehat{\xi} )$  can be injectively immersed in
$(T^2 \times (-\pi /2,\pi/2 ) ,\ker (\sin \theta dx +\cos \theta dy ))$, by a map taking $T$ to  $T^2 \times \{0 \}$.
Moreover any isotopy of $L_0$ admits a lift in  $\widehat{M}$ whose projection to $M$ is one-to-one on its image.
We thus can deduce the proof of the proposition from the result on $T^* S^1 \times S^1$.
\end{proof}

\end{document}